\newtheorem{theorem}{Theorem}[section]
\theoremstyle{definition}
\newtheorem{definition}[theorem]{Definition}
\newtheorem{lemma}[theorem]{Lemma}
\newtheorem{proposition}[theorem]{Proposition}
\newtheorem{corollary}[theorem]{Corollary}
\newtheorem{example}[theorem]{Example}
\theoremstyle{remark}
\numberwithin{equation}{section}
\def\vs{\vspace{0.2cm}}
\begin{document}

\title {Symmetry in maximal ${\bf (s-1,s+1)}$ cores}
\author{Rishi Nath}
\address{York College/City University of New York}
\email{rnath@york.cuny.edu}

\subjclass[2000]{203j0}

\keywords{Young diagrams, symmetric group, $p$-cores}
\date{}
\dedicatory{}
\begin{abstract}
We explain a ``curious symmetry" for maximal $(s-1,s+1)$-core partitions first observed by by T. Amdeberhan and E. Leven. Specifically, using the $s$-abacus, we show such partitions have empty $s$-core and that their $s$-quotient is comprised of 2-cores. This imposes strong conditions on the partition structure, and implies both the Amdeberhan-Leven result and additional symmetry. We also find a more general family of partitions that exhibits these symmetries.
\end{abstract}
\maketitle
\section{Introduction}
The study of simultaneous core partitions, which began only fifteen years ago, has seen a recent spike of interest.  Much of the attention has focused around either a conjecture of Armstrong on the average size of an $(s,t)$-core or generalizing known results on the $(s,s+1)$ (Catalan) case.  Results in a recent paper of Amdeberhan and Leven deviate from this slightly to examine $(s-1,s+1)$-cores in the case where $s$ is even and greater than 2; they note a symmetry in the set of first column hook numbers of $\kappa_{s\pm1},$ the $(s-1,s+1)$-core of maximal size. [Theorem \ref{AmLev} in this paper states their result.]

\vs
Hidden by their proof (which involves involves the integral and fractional parts of a real number) is a connection with the $s$-core and $s$-quotient structure viewed on the $s$-abacus. From this vantage point, the Amdeberhan-Leven theorem is a result on the symmetry of runners (columns) of the $s$-abacus of maximal $(s-1,s+1)$-cores.

Given a partition $\lambda,$ let $\lambda^0$ be the $s$-core of $\lambda$ and $(\lambda_{(0)},\lambda_{(1)}, \cdots, \lambda_{(s-1)})$ be the $s$-quotient of $\lambda.$ Let $\kappa_{s\pm1}$ be the unique maximal simultaneous $(s-1,s+1)$-core partition and $\tau_{\ell}=(\ell,\ell-1,\ell-2,\cdots,1)$ be the $\ell$-th 2-core partition. We state our main theorem.
\begin{theorem} \label{piquo} Let $s=2k>2$. Then $\kappa_{s-1,s+1}$ has the following $s$-core and $s$-quotient structure:
\begin{enumerate}
\item $(\kappa_{s-1,s+1})^0=\emptyset.$
\item ${\kappa_{s\pm1}}_{(i)}={\kappa_{s\pm1}}_{(s-i-1)}=\tau_{k-i-1}$ where $0<i<k-1$.
\end{enumerate}
\end{theorem}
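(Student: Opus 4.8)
The plan is to produce the $s$-abacus display of $\kappa_{s\pm1}$ explicitly and read the $s$-core and $s$-quotient off it. Because $s$ is even, $s-1=2k-1$ and $s+1=2k+1$ are coprime, so $\kappa_{s\pm1}$ is genuinely the unique maximal $(s-1,s+1)$-core, and its first-column hook lengths --- equivalently a $\beta$-set with $N=\frac{(s-2)s}{2}=2k(k-1)$ beads --- are exactly the gaps $G$ of the numerical semigroup $\langle s-1,s+1\rangle$, i.e.\ the positive integers $n$ not representable as $x(2k-1)+y(2k+1)$ with $x,y\ge 0$. (This is the standard description of the maximal core of a coprime pair; it is also the data recorded by the Amdeberhan--Leven hook numbers in Theorem~\ref{AmLev}.) I take $G$ as the starting point.

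First I would convert representability into one clean condition. Writing $x(2k-1)+y(2k+1)=2k(x+y)+(y-x)$ and putting $M=x+y$, $D=y-x$, one sees that $n$ lies in $\langle s-1,s+1\rangle$ exactly when some integer $M$ satisfies $\frac{n}{2k+1}\le M\le\frac{n}{2k-1}$ with $M\equiv n\pmod 2$; equivalently each $M\ge 0$ covers precisely the integers $(2k-1)M,(2k-1)M+2,\dots,(2k+1)M$. Consecutive covering intervals overlap as soon as $M\ge k$, so all large integers are covered and $G$ splits into finitely many bands of consecutive same-parity integers lying between covering intervals. The central computation is to push each band onto the $2k$ runners: a band of consecutive even (resp.\ odd) integers, reduced mod $2k$, occupies a run of consecutive even (resp.\ odd) runners at one level and then wraps from the top runner back to runner $0$ at the next level. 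Carrying this out shows that every runner $d$ receives exactly $k-1$ beads and that its occupied levels form the set
\[
\{0,2,4,\dots,2a-2\}\ \cup\ \{1,3,5,\dots,2b-1\},\qquad a+b=k-1,
\]
with $(a,b)$ given explicitly in terms of $d$ and its parity.

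Part (1) is then immediate: $N=2k(k-1)$ is a multiple of $s=2k$ and every runner carries the same number $k-1=N/s$ of beads, so sliding all beads up produces the staircase $\beta$-set $\{0,1,\dots,N-1\}$ of the empty partition, giving $(\kappa_{s\pm1})^0=\emptyset$. For Part (2), in the displayed level-set the even entries are an initial segment of the evens and the odd entries an initial segment of the odds; this is precisely the statement that the set is a $\beta$-set whose $2$-abacus has all beads pushed up, i.e.\ that $\lambda_{(d)}$ is a $2$-core, necessarily a staircase $\tau_\ell$. Since $a+b=k-1$ is the same for all runners, $\ell$ is determined by $\delta=a-b$ alone, and a short case check ($\ell=-\delta$ for $\delta\le 0$, $\ell=\delta-1$ for $\delta\ge 1$) gives $\ell=k-1-d$ when $0<d<k-1$, which is the asserted value $\tau_{k-d-1}$. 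Finally, the involution $d\leftrightarrow s-1-d=2k-1-d$ interchanges an even runner with an odd runner having the same $\delta$; as $\ell$ depends only on $\delta$, these two runners carry identical staircases, yielding the symmetry ${\kappa_{s\pm1}}_{(d)}={\kappa_{s\pm1}}_{(s-1-d)}$.

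The main obstacle is the middle step --- the band-by-band placement of the gaps on the $2k$ runners, in particular tracking where each band begins (its starting runner and level) and the single wrap-around. Once the per-runner level-sets are in hand, everything else is formal: the empty-core criterion, the recognition of a pushed-up $2$-abacus as a staircase, and the $\delta$-based symmetry. I expect this bookkeeping to be the only delicate point, and it is exactly where the evenness of $s$ --- which makes $s-1,s+1$ the coprime odd pair straddling $2k$ --- is used.
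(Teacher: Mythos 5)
Your proposal is correct, and it reaches Theorem \ref{piquo} by a genuinely different route from the paper. The paper proceeds by construction-and-verification: Definition \ref{pi} posits the explicit bead configuration $\alpha(s)$, an induction on $k$ (Lemmas \ref{rowj} and \ref{nest}) shows $\alpha(s)$ has size $\frac{((s-1)^2-1)((s+1)^2-1)}{24}$ and no $(s\pm1)$-hooks (Lemma \ref{kappaispi}), and uniqueness of the maximal core (Theorem \ref{s2t2}) then identifies $\alpha(s)=\kappa_{s\pm1}$, after which the core and quotient are read off the construction (Lemma \ref{tau}). You instead \emph{derive} the configuration, importing the fact that the beads of the minimal abacus of $\kappa_{s,t}$ are exactly the gaps of $\langle s,t\rangle$ --- the $P_{s,t}\leftrightarrow\kappa_{s,t}$ correspondence the paper recalls in Section 2.2 and itself uses in Section 3, so it is legitimately available --- and then pushing the gaps onto the $2k$ runners. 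Your flagged ``delicate'' step does close: within each parity class the band of gaps between the covering intervals for $M$ and $M+2$ has exactly $2k-2-M$ elements, occupies runners $M+2,M+4,\dots$ at level $M$ and wraps to runners of the same parity up to $2k-4-M$ at level $M+1$ (plus, for odd parity, the initial band $1,3,\dots,2k-3$ at level $0$); summing over bands gives runner $d$ the level set $\{0,2,\dots,2a-2\}\cup\{1,3,\dots,2b-1\}$ with $(a,b)=\bigl(\tfrac{d}{2},\,k-1-\tfrac{d}{2}\bigr)$ for $d$ even and $(a,b)=\bigl(\tfrac{2k-1-d}{2},\,\tfrac{d-1}{2}\bigr)$ for $d$ odd, so that $a+b=k-1$, $\delta=a-b$ equals $d-k+1$ or $k-d$ respectively, and your rule $\ell=-\delta$ versus $\ell=\delta-1$ yields $\tau_{k-1-d}$ for $d\le k-1$ and $\tau_{d-k}$ for $d\ge k$, exactly the claimed quotient and symmetry. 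Two harmless slips: consecutive covering intervals of the \emph{same} parity remain separated until $M=2k-3$ (your threshold $M\ge k$ compares intervals of opposite parity, which is not what governs the band structure), and it is sliding beads toward position $0$ that produces the bead-set $\{0,1,\dots,N-1\}$ of the empty partition in the paper's orientation. As to what each approach buys: the paper's induction exhibits the nesting of $\alpha(s)$ inside $\alpha(s+2)$ (Lemma \ref{nest}), which it reuses for the row and runner symmetries of Section 4, whereas your argument requires no a priori guess of the configuration, no appeal to maximality and uniqueness beyond the gap description, and none of the case analysis excluding $(2k\pm1)$-hooks, while making transparent how the evenness of $s$ --- the odd coprime pair straddling $2k$ --- forces the 2-core quotients.
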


\begin{figure}
\caption{The 8-abacus of $\kappa_{7,9}$}
\hspace{-1cm}
\begin{center}
\scalebox{0.3}{\begin{tikzpicture}
\draw (0cm,-8cm) node [circle, minimum size=1.5cm, inner sep=0pt, draw=none, anchor=south west] {\Huge $0$};
\draw (2cm,-8cm) node [circle, minimum size=1.5cm, inner sep=0pt, draw, anchor=south west] {\Huge $1$};
\draw (4cm,-8cm) node [circle, minimum size=1.5cm, inner sep=0pt, draw, anchor=south west] {\Huge $2$};
\draw (6cm,-8cm) node [circle, minimum size=1.5cm, inner sep=0pt, draw, anchor=south west] {\Huge $3$};
\draw (8cm,-8cm) node [circle, minimum size=1.5cm, inner sep=0pt, draw, anchor=south west] {\Huge $4$};
\draw (10cm,-8cm) node [circle, minimum size=1.5cm, inner sep=0pt, draw, anchor=south west] {\Huge $5$};
\draw (12cm,-8cm) node [circle, minimum size=1.5cm, inner sep=0pt, draw, anchor=south west] {\Huge $6$};
\draw (14cm,-8cm) node [circle, minimum size=1.5cm, inner sep=0pt, draw=none, anchor=south west] {\Huge $7$};
\draw (0cm,-6cm) node [circle, minimum size=1.5cm, inner sep=0pt, draw, anchor=south west] {\Huge $8$};
\draw (2cm,-6cm) node [circle, minimum size=1.5cm, inner sep=0pt, draw=none, anchor=south west] {\Huge $9$};
\draw (4cm,-6cm) node [circle, minimum size=1.5cm, inner sep=0pt, draw, anchor=south west] {\Huge $10$};
\draw (6cm,-6cm) node [circle, minimum size=1.5cm, inner sep=0pt, draw, anchor=south west] {\Huge $11$};
\draw (8cm,-6cm) node [circle, minimum size=1.5cm, inner sep=0pt, draw, anchor=south west] {\Huge $12$};
\draw (10cm,-6cm) node [circle, minimum size=1.5cm, inner sep=0pt, draw, anchor=south west] {\Huge $13$};
\draw (12cm,-6cm) node [circle, minimum size=1.5cm, inner sep=0pt, draw=none, anchor=south west] {\Huge $14$};
\draw (14cm,-6cm) node [circle, minimum size=1.5cm, inner sep=0pt, draw, anchor=south west] {\Huge $15$};

\draw (0cm,-4cm) node [circle, minimum size=1.5cm, inner sep=0pt, draw=none, anchor=south west] {\Huge $16$};
\draw (2cm,-4cm) node [circle, minimum size=1.5cm, inner sep=0pt, draw, anchor=south west] {\Huge $17$};
\draw (4cm,-4cm) node [circle, minimum size=1.5cm, inner sep=0pt, draw=none, anchor=south west] {\Huge $18$};
\draw (6cm,-4cm) node [circle, minimum size=1.5cm, inner sep=0pt, draw, anchor=south west] {\Huge $19$};
\draw (8cm,-4cm) node [circle, minimum size=1.5cm, inner sep=0pt, draw, anchor=south west] {\Huge $20$};
\draw (10cm,-4cm) node [circle, minimum size=1.5cm, inner sep=0pt, draw=none, anchor=south west] {\Huge $21$};
\draw (12cm,-4cm) node [circle, minimum size=1.5cm, inner sep=0pt, draw, anchor=south west] {\Huge $22$};
\draw (14cm,-4cm) node [circle, minimum size=1.5cm, inner sep=0pt, draw=none, anchor=south west] {\Huge $23$};
\draw (0cm,-2cm) node [circle, minimum size=1.5cm, inner sep=0pt, draw, anchor=south west] {\Huge $24$};
\draw (2cm,-2cm) node [circle, minimum size=1.5cm, inner sep=0pt, draw=none, anchor=south west] {\Huge $25$};
\draw (4cm,-2cm) node [circle, minimum size=1.5cm, inner sep=0pt, draw, anchor=south west] {\Huge $26$};
\draw (6cm,-2cm) node [circle, minimum size=1.5cm, inner sep=0pt, draw=none, anchor=south west] {\Huge $27$};
\draw (8cm,-2cm) node [circle, minimum size=1.5cm, inner sep=0pt, draw=none, anchor=south west] {\Huge $28$};
\draw (10cm,-2cm) node [circle, minimum size=1.5cm, inner sep=0pt, draw, anchor=south west] {\Huge $29$};
\draw (12cm,-2cm) node [circle, minimum size=1.5cm, inner sep=0pt, draw=none, anchor=south west] {\Huge $30$};
\draw (14cm,-2cm) node [circle, minimum size=1.5cm, inner sep=0pt, draw, anchor=south west] {\Huge $31$};

\draw (0cm,0cm) node [circle, minimum size=1.5cm, inner sep=0pt, draw=none, anchor=south west] {\Huge $32$};
\draw (2cm,0cm) node [circle, minimum size=1.5cm, inner sep=0pt, draw, anchor=south west] {\Huge $33$};
\draw (4cm,0cm) node [circle, minimum size=1.5cm, inner sep=0pt, draw=none, anchor=south west] {\Huge $34$};
\draw (6cm,0cm) node [circle, minimum size=1.5cm, inner sep=0pt, draw=none, anchor=south west] {\Huge $35$};
\draw (8cm,0cm) node [circle, minimum size=1.5cm, inner sep=0pt, draw=none, anchor=south west] {\Huge $36$};
\draw (10cm,0cm) node [circle, minimum size=1.5cm, inner sep=0pt, draw=none, anchor=south west] {\Huge $37$};
\draw (12cm,0cm) node [circle, minimum size=1.5cm, inner sep=0pt, draw, anchor=south west] {\Huge $38$};
\draw (14cm,0cm) node [circle, minimum size=1.5cm, inner sep=0pt, draw=none, anchor=south west] {\Huge $39$};
\draw (0cm,2cm) node [circle, minimum size=1.5cm, inner sep=0pt, draw, anchor=south west] {\Huge $40$};
\draw (2cm,2cm) node [circle, minimum size=1.5cm, inner sep=0pt, draw=none, anchor=south west] {\Huge $41$};
\draw (4cm,2cm) node [circle, minimum size=1.5cm, inner sep=0pt, draw=none, anchor=south west] {\Huge $42$};
\draw (6cm,2cm) node [circle, minimum size=1.5cm, inner sep=0pt, draw=none, anchor=south west] {\Huge $43$};
\draw (8cm,2cm) node [circle, minimum size=1.5cm, inner sep=0pt, draw=none, anchor=south west] {\Huge $44$};
\draw (10cm,2cm) node [circle, minimum size=1.5cm, inner sep=0pt, draw=none, anchor=south west] {\Huge $45$};
\draw (12cm,2cm) node [circle, minimum size=1.5cm, inner sep=0pt, draw=none, anchor=south west] {\Huge $46$};
\draw (14cm,2cm) node [circle, minimum size=1.5cm, inner sep=0pt, draw, anchor=south west] {\Huge $47$};
\end{tikzpicture}}
\end{center}
\end{figure}
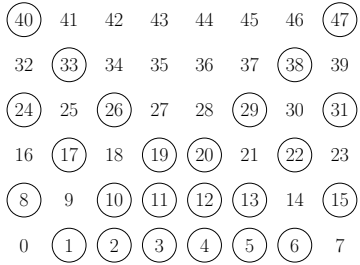
\begin{center}
\begin{figure}
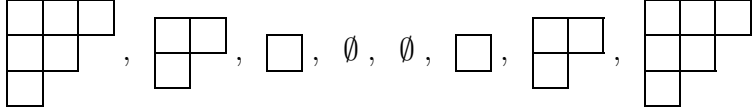

\hspace{-1cm}
\caption{8-quotient of $\kappa_{7,9}$}
\Yvcentermath1
$\yng(3,2,1)\;, \;\; \yng(2,1)\;,\;\; \yng(1)\;,\;\; \emptyset\;,\;\; \emptyset\;,\;\; \yng(1)\;,\; \;\yng(2,1)\;,\; \;\yng(3,2,1)$
\end{figure}
\end{center}
In Section 3.1 we describe the $s$-abacus of $\kappa_{s\pm1}$, which we use to prove Theorem \ref{piquo}. We provide an alternate proof of the Amdeberhan-Leven result in Section 3.2. In Section 4.1 we demonstrate an additional symmetry in the rows of the $s$-abacus of $\kappa_{s-1,s+1}$. We formalize both the runner and row symmetries exhibited by $\kappa_{s\pm1}$ in Section 4.2, and describe the most general family of partitions which satisfy them. 
\begin{example} The 8-abacus of $\kappa_{7,9}$ and the associated 8-quotient are shown in {\bf Figure 1} and {\bf Figure 2} respectively. [Note: the 8-quotient consists of a sequence of 2-core partitions, arising from the structure of the 8-abacus.]
\end{example}
\section{preliminaries}
\subsection{Basic definitions}
Let $\mathbb{N}=\{0,1,\cdots\}$ and
$n\in\mathbb{N}$. A {\it partition} $\lambda$ of
$n$ is defined as a finite, non increasing sequence of positive integers $(\lambda_1,\lambda_2,\cdots)$ that sums to $n$.  Each $\lambda_{\gamma}$ is known as a {\it component} of $\lambda$. Then $\sum_{\gamma}\lambda_{\gamma}=n,$ and $\lambda$ is said to have {\it size} $n$, denoted $|\lambda|=n.$ We also use the notation $\lambda^m_i$ to indicate that $\lambda_i$ occurs $m$-times as a component of $\lambda$.

\vs
The {\it Young diagram} $[\lambda]$ is a graphic representation of $\lambda$ in which rows of boxes corresponding to the integer values in the partition sequence are left-aligned. Then $\lambda^*$ is the {\it conjugate partition} of $\lambda$  obtained by exchanging rows and columns of the Young diagram of $\lambda.$ Then $\lambda$ is {\it self-conjugate} if $\lambda=\lambda^*.$ Using matrix notation, a {\it hook} $h_{\iota \gamma}$ of $[\lambda]$ with {\it corner} $(\iota, \gamma)$ is the set of boxes to the right of $(\iota, \gamma)$ in the same row, below $(\iota, \gamma)$ in the same column, and $(\iota, \gamma)$ itself. Given $h_{\iota \gamma},$ its {\it length} $|h_{\iota \gamma}|$ is the number of boxes in the hook. The set $\{h_{1\gamma}\}$ are the {\it first-column hooks} of $\lambda$.

\vs
One can {\it remove} a hook $h$ of $\lambda$ by deleting boxes in $[\lambda]$ which comprise $h$ and migrating any remaining detached boxes up-and-to-the-left. In this way a new partition $\lambda'$ of size $n-|h_{\iota \gamma}|$ is obtained. An {\it s-hook} is a hook of length $s$. An {\it s-core partition} $\lambda$ is one in which no hook of length $s$ appears in the Young diagram. 

\subsection{Simultaneous $(s,t)$-core partitions}
Let $r,s,t$ be positive integers. A {\it simultaneous (s,t)-core partition} is one in which no hook of length $s$ or $t$ appears. In 1999, J. Anderson \cite{A} proved when $(s,t)=1,$ there are exactly $\binom{s+t}{t}/(s+t)$ simultaneous $(s,t)$-cores. Subsequent work by B. Kane \cite{Ka}, J. Olsson and D. Stanton \cite{O-S}, J. Vandehey \cite{V} confirmed the existence of a unique {\it maximal} $(s,t)$-core of size $\frac{(s^2-1)(t^2-1)}{24}$ which contains all other $(s,t)$-cores. This maximal simultaneous $(s,t)$-core partition is denoted by $\kappa_{s,t}.$ [A. Tripathi \cite{T} and M. Fayers \cite{F} obtained some of the results above using different methods.] When it is convenient we will denote $\kappa_{s-1,s+1}$ by $\kappa_{s\pm1}$.
\begin{theorem}\label{s2t2}[Olsson-Stanton, Theorem 4.1, \cite{O-S}] Suppose $(s,t)=1$. There is a unique maximal simultaneous $(s,t)$-core $\kappa_{s,t}$ of size $\frac{(s^2-1)(t^2-1)}{24}.$ In particular, $\kappa_{s,t}$ is self-conjugate.
\end{theorem}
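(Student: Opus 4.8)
The plan is to translate everything into the language of beta-sets (equivalently, the $s$- and $t$-abaci) and of the numerical semigroup $\langle s,t\rangle=\{as+bt:a,b\in\mathbb{N}\}$. First I would record the standard dictionary: a partition is an $s$-core exactly when its beta-set is closed under subtracting $s$ (every bead slides as far up the $s$-abacus as possible), and likewise for $t$. Reading this off the first-column hook lengths, one arrives at Anderson's clean description: $\lambda$ is a simultaneous $(s,t)$-core iff its set $B$ of first-column hook lengths is a down-set of the poset $P_{s,t}$ on the gaps $N_{s,t}=\mathbb{Z}_{>0}\setminus\langle s,t\rangle$, ordered by $a\preceq b\iff b-a\in\langle s,t\rangle$. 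Since $\gcd(s,t)=1$, Sylvester's theorem gives $|N_{s,t}|=(s-1)(t-1)/2<\infty$, so there are only finitely many $(s,t)$-cores, which is the setting in which a maximal one can be sought.

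For existence I would observe that the full gap set $N_{s,t}$ is itself a down-set: if $b$ is a gap and $b-a\in\langle s,t\rangle$ for some $0<a<b$, then $a\in\langle s,t\rangle$ would force $b=a+(b-a)\in\langle s,t\rangle$, contradicting that $b$ is a gap; hence $a\in N_{s,t}$. Thus there is a distinguished $(s,t)$-core $\kappa_{s,t}$ whose first-column hook lengths are precisely the gaps. The heart of the uniqueness claim is then to show that $\kappa_{s,t}$ contains every $(s,t)$-core as a Young subdiagram, which I would establish through the bead-dominance criterion for Young containment applied simultaneously on the $s$- and $t$-abaci. Granting this, $\kappa_{s,t}$ is strictly larger in size than any other $(s,t)$-core, so it is the unique core of maximal size.

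Self-conjugacy then follows with no extra computation. Conjugation preserves the multiset of hook lengths of a Young diagram, so $\lambda$ is an $(s,t)$-core iff $\lambda^{*}$ is, and $|\lambda|=|\lambda^{*}|$. Hence conjugation is a size-preserving involution on the finite set of $(s,t)$-cores; since $\kappa_{s,t}$ is the unique core of maximal size, $\kappa_{s,t}^{*}$ is also a core of maximal size and must therefore coincide with $\kappa_{s,t}$.

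Finally, for the size I would use the identity $|\lambda|=\sum_{h\in B}h-\binom{|B|}{2}$, valid for any partition with first-column hook set $B$. Substituting $B=N_{s,t}$ and $|B|=(s-1)(t-1)/2$ reduces the problem to a single quantitative input, the classical sum-of-gaps evaluation $\sum_{n\in N_{s,t}}n=\frac{(s-1)(t-1)(2st-s-t-1)}{12}$; inserting this and simplifying collapses to $\frac{(s^{2}-1)(t^{2}-1)}{24}$, as claimed. I expect the two substantive points to be (i) the containment statement underlying uniqueness, which is purely structural, and (ii) the sum-of-gaps formula, which is the genuine quantitative obstacle and must be obtained by an independent count (for instance via the Ap\'ery set of $\langle s,t\rangle$ or a generating-function argument); once it is in hand, the stated size is a routine simplification.
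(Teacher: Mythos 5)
The paper itself does not prove this statement: it is imported as background from Olsson--Stanton \cite{O-S} (with the maximality and containment results also attributed to \cite{Ka}, \cite{V}, \cite{T}, \cite{F}), so your proposal has to stand entirely on its own. Much of its skeleton is sound and matches the framework the paper describes in Section 2.2: Anderson's dictionary between $(s,t)$-cores and lower ideals of $P_{s,t}$ is correct; your observation that the full gap set $N_{s,t}$ is itself a lower ideal gives existence of the candidate $\kappa_{s,t}$; the self-conjugacy argument (conjugation preserves the multiset of hook lengths, hence permutes the finite set of $(s,t)$-cores and must fix the unique maximal one) is clean and correct \emph{granted uniqueness}; and the size computation checks out, since $|\lambda|=\sum_{h\in B}h-\binom{|B|}{2}$ with $B=N_{s,t}$, $|B|=\frac{(s-1)(t-1)}{2}$, and the classical gap-sum $\frac{(s-1)(t-1)(2st-s-t-1)}{12}$ does simplify to $\frac{(s^2-1)(t^2-1)}{24}$.

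The genuine gap is the step you yourself call the heart and then dispatch in a single clause: that $\kappa_{s,t}$ contains every $(s,t)$-core, or even the weaker claim that it uniquely maximizes size. This is not ``purely structural'': it is precisely Vandehey's theorem (reference \cite{V} of this paper, reproved by Fayers \cite{F}), and an entire paper is devoted to it. The ``bead-dominance criterion'' only restates what must be shown --- after padding $B$ to the cardinality $n$ of $N_{s,t}$, one needs the $j$-th largest element of $B$ plus $n-|B|$ to never exceed the $j$-th largest gap --- and nothing in the lower-ideal property delivers that inequality by formal manipulation. A concrete warning that real content is being skipped: for $(s,t)=(5,6)$, the lower ideal $B=\{1,2,3,4,8,9,14\}$ corresponds to the core $(8,4,4,1,1,1,1)$ of size $20$, while the strictly smaller ideal $B\setminus\{1\}$ corresponds to $(9,5,5,2,2,2)$ of size $25$, which is not contained in the former; so neither size nor Young-diagram containment is monotone under ideal inclusion, and the naive induction (deleting one maximal element $m$ changes the size by $m-(|B|-1)$, which can be negative) collapses. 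Any correct proof of maximality must therefore be genuinely global, and your proposal does not supply one. Incidentally, if you want self-conjugacy without routing it through uniqueness, the paper's own toolkit gives it directly: the semigroup symmetry $x\in\langle s,t\rangle\iff st-s-t-x\notin\langle s,t\rangle$ says exactly that the bead-set $N_{s,t}$ is invariant under the bead--spacer reflection about the axis $\theta=\frac{st-s-t}{2}$, which is the criterion of Corollary \ref{selfhalfint}.
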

\begin{figure}
\begin{center}
\raisebox{.05in}{\scalebox{0.35}{\begin{tikzpicture}
\draw (0,-13.5cm) node [rectangle, minimum size=1.5cm, inner sep=0pt, draw, anchor=south west]  {\Huge $7$};
\draw (1.5cm,-13.5cm) node [rectangle, minimum size=1.5cm, inner sep=0pt, draw, anchor=south west]  {\Huge $4$};
\draw (3.0cm,-13.5cm) node [rectangle, minimum size=1.5cm, inner sep=0pt, draw, anchor=south west]  {\Huge $2$};
\draw (4.5cm,-13.5cm) node [rectangle, minimum size=1.5cm, inner sep=0pt, draw, anchor=south west] {\Huge $1$};
\draw (0,-15.0cm) node [rectangle, minimum size=1.5cm, inner sep=0pt, draw, anchor=south west]  {\Huge $4$};
\draw (1.5cm,-15.0cm) node [rectangle, minimum size=1.5cm, inner sep=0pt, draw, anchor=south west]  {\Huge $1$};
%
\draw (0,-16.5cm) node [rectangle, minimum size=1.5cm, inner sep=0pt, draw, anchor=south west]  {\Huge $2$};
\draw (0,-18.0cm) node [rectangle, minimum size=1.5cm, inner sep=0pt, draw, anchor=south west]  {\Huge $1$};
\end{tikzpicture}}}
\end{center}
\caption{Young diagram (with hook lengths) of $\kappa_{3,5}$}
\end{figure}

\vs
A recent paper of D. Armstrong, C. Hanusa and B. Jones \cite{A-H-J} includes a conjecture (the Armstrong conjecture) that the average size of a $(s,t)$-core is $\frac{(s+t+1)(s-1)(t-1)}{24}$. R. Stanley and F. Zenello \cite{S-Z} subsequently resolved the Catalan ($t=s+1$) case of the Armstrong conjecture; they employ a bijection between lower ideals in the poset $P_{s,t}$ and simultaneous $(s,t)$-cores. [Here $P_{s,t}$ is the partially ordered set whose elements are all positive integers not contained in the numerical semigroup generated by ${s, t}.$ The partial order requires $z_1 \in P_{s,t}$ to cover $z_2 \in P_{s,t}$ if $z_1-z_2$ is either $s$ or $t$.] Under this map a lower ideal $I$ of $P_{s,t}$ corresponds to an $(s,t)$-core partition whose first-column hook lengths are exactly the values in $I$. Then $P_{s,t}$ corresponds to $\kappa_{s,t}.$ 

\vs
These two papers have led to renewed interest in simultaneous core partitions. The Armstrong conjecture has been verified for self-conjugate partitions by W. Chen, H. Huang, and L. Wang \cite{C-H-W} and for $(s, ms+1)$ by A. Aggarwal \cite{AA1}. T. Amdeberhan and E. Leven \cite{A-L} extended Stanley and Zanello's bijection to lower poset ideals and simultaneous $(s_1,s_2,\cdots,s_k)$-cores. Several conjectures of T. Ambederhan \cite{A} on the maximal and average size simultaneous $(s,s+1,s+2)$-cores have been proved first by J. Yang, M. Zhong and R. Zhou \cite{Y-Z-Z} and later by H. Xiong \cite{X}. A. Aggarwal has also proved a partial converse to a theorem of Vandehey on the containment of simultaneous $(r,s,t)$-cores \cite{AA2}. 
\subsection{A ``curious symmetry"}
Amdeberhan and Leven also examine $P_{r,r+2}$ for $r$ odd. They first construct a $(r-1)\times(r+1)$ rectangle $R$ as follows: the bottom-left corner is labelled by 1, the numbers increase from left-to-right and bottom-to-top, and the largest position, in the upper-right corner, is labeled by $(r-1)(r+1)$. If $x\in P_{r,r+2}$ then $x$ is entered into this rectangle, otherwise the position is left blank. Using a {\it runner-row index}, counting runners (or columns) $a$ from left-to-right in the $x$-coordinate ($1\leq a\leq r+1$), and rows $b$ from bottom-to-top in the $y$-coordinate ($1\leq b\leq r-1$), they prove the following result, which they call a ``curious symmetry."
\begin{theorem} \label{AmLev} [Amdeberhan-Leven, Theorem 2.2, \cite{A-L}] For $r\geq 3$ the $(a,b)$ entry of $R$ is an element of $P_{r,r+2}$ if and only if $\{a,r-1-b\}$ is not.
Equivalently, for $1\leq a\leq r+1$ and $1\leq b\leq r-1,$ $(r+1)(b-1)+a\in P_{r,r+2}$ if and only if $(r+1)(r-1-b)+a\not\in P_{r,r+2}.$
\end{theorem}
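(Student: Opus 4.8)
The plan is to deduce Theorem \ref{AmLev} from the abacus description supplied by Theorem \ref{piquo}, rather than through the integral-and-fractional-part computation of the original proof. Since $r\geq 3$ is odd, $s:=r+1=2k$ with $k=(r+1)/2\geq 2$, so the hypothesis $s=2k>2$ of Theorem \ref{piquo} is met, and the maximal $(r,r+2)$-core is exactly $\kappa_{s-1,s+1}=\kappa_{s\pm1}$. By the Stanley--Zanello correspondence recalled above, $P_{r,r+2}$ is the set of first-column hook lengths of this partition. The first step is the standard translation into beads on the $s$-abacus: because $r$ is odd, $|P_{r,r+2}|=(r^2-1)/2=s(k-1)$ is a multiple of $s$, so the set of first-column hook lengths is itself a legitimate $s$-abacus configuration with $s(k-1)$ beads, in which a position $sd+c$ (runner $c$, row $d$) carries a bead precisely when it lies in $P_{r,r+2}$ and is empty precisely when it is a semigroup element. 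Under this dictionary the Amdeberhan--Leven rectangle $R$ becomes a window of the abacus: the column-$a$ entry $(r+1)(b-1)+a=s(b-1)+a$ lies on runner $a\bmod s$, so the $s$ columns of $R$ are the $s$ runners, the index $b$ runs through $r-1$ consecutive rows, and the reflection $b\mapsto r-b$ of the theorem is the reversal of these $r-1$ rows carried out \emph{within each runner separately}.

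Next I would invoke Theorem \ref{piquo} to pin down the pattern on each runner. Part (1), that $(\kappa_{s\pm1})^0=\emptyset$, is exactly the assertion that the $s(k-1)$ beads are distributed evenly, $k-1$ to each runner; this is what centres each runner's window of rows about its ground-state level, so that the purely combinatorial reversal $b\mapsto r-b$ coincides with abacus conjugation on the nose, with no extraneous shift. Part (2), together with the boundary cases displayed in Figure 2, says that \emph{every} quotient partition ${\kappa_{s\pm1}}_{(i)}$ is one of the staircases $\tau_m=(m,m-1,\dots,1)$. The structural fact I would then use is the standard one that reversing-and-complementing the bead/gap sequence of a single-runner abacus produces the conjugate of the partition it encodes; consequently a runner's pattern is invariant under reverse-and-complement exactly when its quotient partition is self-conjugate. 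Since each staircase $\tau_m$ is self-conjugate, every runner of $\kappa_{s\pm1}$ has a reverse-complement-symmetric pattern.

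Combining these observations yields the theorem. Fixing a column $a$ (hence a runner), reverse-complement symmetry says that the row-$b$ position carries a bead if and only if the reflected row-$(r-b)$ position carries a gap. Translating back through the dictionary, this is precisely the statement that $(r+1)(b-1)+a\in P_{r,r+2}$ if and only if $(r+1)(r-1-b)+a\notin P_{r,r+2}$, which is Theorem \ref{AmLev}. A count corroborates the picture: because $r$ is odd the equation $b=r-b$ has no solution in range, so the $r-1$ rows of each column are paired without a fixed point, exactly one member of each pair is a gap, and summing over the $s$ columns recovers the correct total $(r^2-1)/2$ of first-column hook lengths.

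The step I expect to be the main obstacle is bookkeeping at the boundary rather than anything conceptual: the window attached to runner $0$, which is the column $a=r+1=s$, consists of rows $1,\dots,r-1$ and is shifted up by one relative to the other runners, because the ever-present semigroup element $0$ occupies row $0$ and is excluded from the rectangle. I would therefore handle this column separately, checking that once the omitted position $0$ is accounted for the reverse-complement symmetry of the largest staircase $\tau_{k-1}$ still matches $b\mapsto r-b$ exactly. The precise calculation making the alignment work is the verification that the window length $r-1=s-2$ is exactly large enough to support the self-conjugate beta-set of $\tau_{k-1}$, which has $k-1$ beads occupying $2(k-1)=r-1$ rows; confirming this off-by-one alignment on every runner is the delicate part of the argument.
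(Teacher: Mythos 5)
Your proposal follows essentially the same route as the paper's own alternative proof: both deduce the theorem from the abacus structure of $\kappa_{s\pm1}$ established in Theorem \ref{piquo} and Lemma \ref{kappaispi} --- the empty $s$-core forces $k-1$ beads and a common axis of symmetry on every runner, the staircase (hence self-conjugate) quotients force each runner's bead/spacer pattern to be reverse-complement invariant about that axis, and the runner-row dictionary then yields the statement about $P_{r,r+2}$. If anything, your explicit treatment of the shifted window on the column $a=r+1$ (runner $0$, where the excluded position $0$ displaces the rows by one) is more careful than the paper's proof, which passes over that off-by-one alignment silently.
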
 
\begin{figure}   
\begin{center}
\raisebox{.05in}{\scalebox{0.35}{\begin{tikzpicture}
\draw (0,-13.5cm) node [rectangle, minimum size=1.5cm, inner sep=0pt, draw, anchor=south west]  {\Huge };
\draw (1.5cm,-13.5cm) node [rectangle, minimum size=1.5cm, inner sep=0pt, draw, anchor=south west]  {\Huge };
\draw (3.0cm,-13.5cm) node [rectangle, minimum size=1.5cm, inner sep=0pt, draw, anchor=south west]  {\Huge };
\draw (4.5,-13.5cm) node [rectangle, minimum size=1.5cm, inner sep=0pt, draw, anchor=south west]  {\Huge };
\draw (6.0cm,-13.5cm) node [rectangle, minimum size=1.5cm, inner sep=0pt, draw, anchor=south west]  {\Huge };
\draw (7.5cm,-13.5cm) node [rectangle, minimum size=1.5cm, inner sep=0pt, draw, anchor=south west]  {\Huge };
\draw (9.0cm,-13.5cm) node [rectangle, minimum size=1.5cm, inner sep=0pt, draw, anchor=south west]  {\Huge $47$};
\draw (10.5cm,-13.5cm) node [rectangle, minimum size=1.5cm, inner sep=0pt, draw, anchor=south west]  {\Huge };
\draw (0,-15.0cm) node [rectangle, minimum size=1.5cm, inner sep=0pt, draw, anchor=south west]  {\Huge 33};
\draw (1.5cm,-15.0cm) node [rectangle, minimum size=1.5cm, inner sep=0pt, draw, anchor=south west]  {\Huge };
\draw (3.0cm,-15.0cm) node [rectangle, minimum size=1.5cm, inner sep=0pt, draw, anchor=south west]  {\Huge };
\draw (4.5,-15.0cm) node [rectangle, minimum size=1.5cm, inner sep=0pt, draw, anchor=south west]  {\Huge };
\draw (6.0cm,-15.0cm) node [rectangle, minimum size=1.5cm, inner sep=0pt, draw, anchor=south west]  {\Huge };
\draw (7.5cm,-15.0cm) node [rectangle, minimum size=1.5cm, inner sep=0pt, draw, anchor=south west]  {\Huge $38$};
\draw (9.0cm,-15.0cm) node [rectangle, minimum size=1.5cm, inner sep=0pt, draw, anchor=south west]  {\Huge };
\draw (10.5cm,-15.0cm) node [rectangle, minimum size=1.5cm, inner sep=0pt, draw, anchor=south west]  {\Huge $40$};
\draw (0cm,-16.5cm) node [rectangle, minimum size=1.5cm, inner sep=0pt, draw, anchor=south west]  {\Huge };
\draw (1.5cm,-16.5cm) node [rectangle, minimum size=1.5cm, inner sep=0pt, draw, anchor=south west]  {\Huge };
\draw (3.0cm,-16.5cm) node [rectangle, minimum size=1.5cm, inner sep=0pt, draw, anchor=south west]  {\Huge };
\draw (4.5,-16.5cm) node [rectangle, minimum size=1.5cm, inner sep=0pt, draw, anchor=south west]  {\Huge };
\draw (6.0cm,-16.5cm) node [rectangle, minimum size=1.5cm, inner sep=0pt, draw, anchor=south west]  {\Huge $29$};
\draw (7.5cm,-16.5cm) node [rectangle, minimum size=1.5cm, inner sep=0pt, draw, anchor=south west]  {\Huge };
\draw (9.0cm,-16.5cm) node [rectangle, minimum size=1.5cm, inner sep=0pt, draw, anchor=south west]  {\Huge $31$};
\draw (10.5cm,-16.5cm) node [rectangle, minimum size=1.5cm, inner sep=0pt, draw, anchor=south west]  {\Huge };
\draw (0,-18.0cm) node [rectangle, minimum size=1.5cm, inner sep=0pt, draw, anchor=south west]  {\Huge $17$};
\draw (1.5cm,-18cm) node [rectangle, minimum size=1.5cm, inner sep=0pt, draw, anchor=south west]  {\Huge $18$};
\draw (3.0cm,-18cm) node [rectangle, minimum size=1.5cm, inner sep=0pt, draw, anchor=south west]  {\Huge $19$};
\draw (4.5,-18cm) node [rectangle, minimum size=1.5cm, inner sep=0pt, draw, anchor=south west]  {\Huge $20$};
\draw (6.0cm,-18cm) node [rectangle, minimum size=1.5cm, inner sep=0pt, draw, anchor=south west]  {\Huge };
\draw (7.5cm,-18cm) node [rectangle, minimum size=1.5cm, inner sep=0pt, draw, anchor=south west]  {\Huge $22$};
\draw (9.0cm,-18cm) node [rectangle, minimum size=1.5cm, inner sep=0pt, draw, anchor=south west]  {\Huge };
\draw (10.5cm,-18cm) node [rectangle, minimum size=1.5cm, inner sep=0pt, draw, anchor=south west]  {\Huge $24$};
\draw (0,-19.5cm) node [rectangle, minimum size=1.5cm, inner sep=0pt, draw, anchor=south west]  {\Huge };
\draw (1.5cm,-19.5cm) node [rectangle, minimum size=1.5cm, inner sep=0pt, draw, anchor=south west]  {\Huge $10$};
\draw (3.0cm,-19.5cm) node [rectangle, minimum size=1.5cm, inner sep=0pt, draw, anchor=south west]  {\Huge $11$};
\draw (4.5,-19.5cm) node [rectangle, minimum size=1.5cm, inner sep=0pt, draw, anchor=south west]  {\Huge $12$};
\draw (6.0cm,-19.5cm) node [rectangle, minimum size=1.5cm, inner sep=0pt, draw, anchor=south west]  {\Huge $13$};
\draw (7.5cm,-19.5cm) node [rectangle, minimum size=1.5cm, inner sep=0pt, draw, anchor=south west]  {\Huge };
\draw (9.0cm,-19.5cm) node [rectangle, minimum size=1.5cm, inner sep=0pt, draw, anchor=south west]  {\Huge $15$};
\draw (10.5cm,-19.5cm) node [rectangle, minimum size=1.5cm, inner sep=0pt, draw, anchor=south west]  {\Huge };
\draw (0,-21.0cm) node [rectangle, minimum size=1.5cm, inner sep=0pt, draw, anchor=south west]  {\Huge $1$};
\draw (1.5cm,-21.0cm) node [rectangle, minimum size=1.5cm, inner sep=0pt, draw, anchor=south west]  {\Huge $2$};
\draw (3.0cm,-21.0cm) node [rectangle, minimum size=1.5cm, inner sep=0pt, draw, anchor=south west]  {\Huge $3$};
\draw (4.5,-21.0cm) node [rectangle, minimum size=1.5cm, inner sep=0pt, draw, anchor=south west]  {\Huge $4$};
\draw (6.0cm,-21.0cm) node [rectangle, minimum size=1.5cm, inner sep=0pt, draw, anchor=south west]  {\Huge $5$};
\draw (7.5cm,-21.0cm) node [rectangle, minimum size=1.5cm, inner sep=0pt, draw, anchor=south west]  {\Huge $6$};
\draw (9.0cm,-21.0cm) node [rectangle, minimum size=1.5cm, inner sep=0pt, draw, anchor=south west]  {\Huge };
\draw (10.5cm,-21.0cm) node [rectangle, minimum size=1.5cm, inner sep=0pt, draw, anchor=south west]  {\Huge $8$};
\end{tikzpicture}}}
\caption{Amdeberhan-Leven rectangle $R$ for $P_{7,9}$}
\end{center}
\end{figure}
[There is a precedent for the case Amdeberhan-Leven consider. For $r=2k+1>1$, the maximal simultaneous $(r,r+2)$-core is self-conjugate by Theorem \ref{s2t2}. The author and C. Hanusa showed in \cite{H-N} that it is more natural to think about simultaneous $(r,r+2)$-core partitions than simultaneous $(s,s+1)$-core partitions, which behave better in the non-self-conjugate case.] For the remainder of this paper we will let $s=r+1$, and will consider maximal $(s-1,s+1)$-core, where $s$ is even and greater than 2. We now review the $s$-abacus, $s$-core, and $s$-quotient constructions.
\subsection{bead-sets}
A {\it bead-set} $X$ corresponding to a partition $\lambda$ is generalization of
the set of first column hooks in the following sense:
$X=\{0,1,\cdots,k,|h_{11}|+k,|h_{12}|+k, |h_{13}|+k,\cdots\}$ for some non-negative
integer $k$. It can also be seen as a finite set of non-negative integers, represented by {\it beads} at
integral points of the $x$-axis, i.e. a bead at position $x$ for
each $x$ in $X$ and {\it spacers} at positions not in $X$. A {\it minimal} bead-set $X$ is one where the first space is counted as 0. Then
$|X|$ is the number of beads that occur after the zero position, where ever that may fall. We say $X=\{0,1,\cdots,k,|h_{11}|+k,|h_{12}|+k, |h_{13}|+k,\cdots\}$ is {\it normalized with respect to s} if $k$ is the minimal integer such that $|X|\equiv 0 \pmod s.$ 
\subsection{2-cores and staircase partitions}
[The results in this section are stated without proof; for more details see Section 2 in \cite{O}.] The set of hooks $\{h_{\iota \gamma}\}$ of $\lambda$ correspond bijectively to pairs $(x,y)$ where $x\in X$, $y\not\in X$ and $x>y$; that is, a bead in the bead-set $X$ of $\lambda$ and a spacer to the left of it. Hooks of length $s$ are those such that $x-y=s$. 

Each first-column hook length, or bead $x_i$ in the minimal bead-set $X$, also corresponds to a row, or component $\lambda_i$ of $\lambda$ The following result allows us to recover the size of the components from $X.$
\begin{lemma} \label{onebead}The size of the component $\lambda_i$ corresponding to the bead $x_i\in X$ is the number of spacers to the left of the bead; that is, $\lambda_i=|y\not\in X:y<x_i|.$
\end{lemma}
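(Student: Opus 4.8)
The plan is to prove the identity by splitting the integer positions lying strictly below the bead $x_i$ into beads and spacers and counting each class separately. I would write the bead-set in decreasing order as $X=\{x_1>x_2>\cdots>x_m\}$, so that $m=|X|$ and, under the bead-to-row correspondence set up in Section 2.5, the bead $x_i$ is the one attached to the component $\lambda_i$. Two ingredients are needed: (i) the encoding relation $x_i=\lambda_i+b_i$, where $b_i$ denotes the number of beads lying strictly below $x_i$; and (ii) the elementary observation that the positions $\{0,1,\dots,x_i-1\}$, exactly $x_i$ of them, are partitioned into the beads below $x_i$ and the spacers below $x_i$.

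First I would establish (i). The beads strictly below $x_i$ in the decreasing list are $x_{i+1},\dots,x_m$, so $b_i=m-i$. To see that this matches $x_i-\lambda_i$, recall from the construction in Section 2.4 that the bead at $x_i$ is the shift of the first-column hook length $|h_{i1}|=\lambda_i+(\ell-i)$, where $\ell$ is the number of nonzero parts. The beads strictly below $x_i$ are precisely the low (padding) beads together with the big beads coming from the strictly longer rows $j>i$, of which there are $\ell-i$. Because the shift constant $k$ is chosen to equal the number of padding beads — this is exactly what the normalization of the bead-set guarantees — the padding contribution cancels and one is left with $x_i=\lambda_i+b_i$.

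With (i) in hand, (ii) closes the argument in one line:
\[
|\{y\notin X:y<x_i\}| \;=\; x_i - b_i \;=\; \lambda_i,
\]
since the $x_i$ positions below $x_i$ split into the $b_i$ beads counted in (i) and the remaining spacers.

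The only genuinely delicate point is (i): keeping the shift constant and the number of low padding beads in lockstep so that they cancel. This is bookkeeping rather than real content; conceptually, the spacers below $x_i$ are in bijection with the boxes of row $i$ of $[\lambda]$. Reading the bead/spacer sequence from position $0$ upward as the boundary lattice path of the Young diagram, each bead is a vertical (north) step and each spacer a horizontal (east) step, so the spacers occurring below a given bead are exactly the east steps that extend the row capped by that bead, and their number is the row length $\lambda_i$. I would phrase the final write-up around this bijection to make the dependence on the normalization transparent and to sidestep the constant-chasing.
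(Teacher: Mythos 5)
Your argument is correct, but note that the paper does not actually prove this lemma: Section 2.5 opens with the disclaimer that its results are stated without proof, deferring to Section 2 of Olsson's notes \cite{O}. So the comparison here is between your self-contained proof and a citation, and your write-up supplies exactly the standard argument the citation points to. The decomposition you use --- the $x_i$ positions strictly below the bead $x_i$ split into $b_i=m-i$ beads and $x_i-b_i$ spacers, combined with the encoding identity $x_i=\lambda_i+b_i$ --- is the classical beta-set computation, and the boundary lattice-path reading (beads as north steps, spacers as east steps) that you propose for the final write-up is indeed the cleanest way to see it. Two small corrections to your bookkeeping step (i). First, the phrase ``big beads coming from the strictly longer rows $j>i$'' should refer to the \emph{lower} (hence weakly shorter) rows $j>i$; what makes those beads lie below $x_i$ is that their first-column hook lengths $\lambda_j+\ell-j$ are strictly smaller than $\lambda_i+\ell-i$, not that the rows are longer. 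Second, the cancellation of the shift constant against the padding beads is not ``what the normalization of the bead-set guarantees'': in this paper \emph{normalized} means $|X|\equiv 0 \pmod{s}$, which is irrelevant here. The cancellation is built into the definition of a bead-set itself (shift every hook length by $k$ and pad with exactly $k$ low beads), and your identity $x_i=\lambda_i+b_i$ then holds for every such $X$, normalized or not. Incidentally, the paper's displayed definition $X=\{0,1,\cdots,k,|h_{11}|+k,\cdots\}$ contains an off-by-one slip --- the padding should be $\{0,1,\dots,k-1\}$, i.e.\ $k$ beads, not $k+1$ --- and your proof silently uses the corrected convention; with the padding as literally printed, the spacer count below $x_i$ would come out to $\lambda_i-1$, contradicting the lemma.
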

Let $\tau_{k}=(k,k-1,\cdots,1)$ be the {\it k-th staircase partition}. Then $|\tau_k|=t_k$ where $t_{k}=\binom{k+1}{2}$ (the $k$-th triangular number). The following lemmas are well-known.
\begin{lemma} \label{2core}The 2-core partitions are exactly the staircase partitions.
\end{lemma}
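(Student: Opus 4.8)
The plan is to translate the $2$-core condition into a statement about the two-runner abacus and then read off the partition directly. By the correspondence recalled just before Lemma~\ref{onebead}, the hooks of $\lambda$ are in bijection with pairs $(x,y)$ with $x\in X$, $y\notin X$, $x>y$, and a hook has length $2$ exactly when $x-y=2$. Hence $\lambda$ is a $2$-core if and only if its (minimal) bead-set $X$ contains no bead $x$ whose position $x-2$ is a spacer; equivalently, viewing $X$ on the two-runner abacus (even positions on one runner, odd on the other), $\lambda$ is a $2$-core precisely when the beads on each runner form a downward-packed initial segment. This is the key reduction, and everything else is bookkeeping on top of it.

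For the inclusion ``staircase $\Rightarrow$ $2$-core'', I would compute the first-column hook lengths of $\tau_k=(k,k-1,\dots,1)$ explicitly. Row $i$ has $\lambda_i=k+1-i$ and there are $k$ rows, so $|h_{1i}|=\lambda_i+(k-i)=2(k-i)+1$; as $i$ ranges over $1,\dots,k$ these are exactly the odd numbers $1,3,\dots,2k-1$. Thus the bead-set of $\tau_k$ (with $k$ beads) lies entirely on the odd runner and is packed, while the even runner is empty. By the reduction there is no $2$-hook, so $\tau_k$ is a $2$-core.

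For the converse, suppose $\lambda$ is a $2$-core. By the reduction its beads occupy $E=\{0,2,\dots,2a-2\}$ on the even runner and $O=\{1,3,\dots,2b-1\}$ on the odd runner for some $a,b\ge 0$. I would then apply Lemma~\ref{onebead}, which reads each part of $\lambda$ as the number of spacers to the left of the corresponding bead, and compute these counts. Splitting into the cases $a\ge b$ and $a\le b$, the nonzero spacer-counts come out to be $1,2,\dots,a-b-1$ in the first case and $1,2,\dots,b-a$ in the second, so $\lambda$ equals $\tau_{a-b-1}$ or $\tau_{b-a}$ respectively; in either case $\lambda$ is a staircase, which completes the equivalence.

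The main obstacle is exactly this converse computation: verifying that every downward-packed two-runner configuration yields a genuine staircase, rather than merely a partition with distinct parts, requires the case split on the relative heights $a$ and $b$ of the two runners together with careful counting of the spacers below each bead. If that bookkeeping becomes unwieldy, I would instead argue directly on the Young diagram, bypassing the abacus arithmetic: a horizontal $2$-hook exists iff some consecutive parts satisfy $\lambda_i-\lambda_{i+1}\ge 2$ (with the convention $\lambda_{m+1}=0$), and a vertical $2$-hook exists iff some two consecutive parts are equal; ruling out both simultaneously forces $\lambda_i-\lambda_{i+1}=1$ for every $i$ and $\lambda_m=1$, i.e. $\lambda=\tau_m$ where $m$ is the number of parts. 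This alternative may in fact be the cleaner route to present.
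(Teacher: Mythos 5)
The paper never proves Lemma \ref{2core}: Section 2.5 opens by declaring that its results are stated without proof, with a pointer to Section 2 of Olsson's notes \cite{O}, so there is no in-paper argument to compare against. Your proof is correct and sits naturally inside the paper's own framework. The forward direction recovers exactly the content of Lemma \ref{2coreab}: the first-column hook lengths of $\tau_k$ are $1,3,\dots,2k-1$, so on the two-runner abacus both runners are packed initial segments and no bead has a spacer two positions below it. The converse bookkeeping also checks out: with $a$ beads packed on the even runner and $b$ on the odd runner, Lemma \ref{onebead} yields nonzero parts $1,2,\dots,a-b-1$ when $a>b$ and $1,2,\dots,b-a$ when $b\ge a$, hence a staircase in either case (the degenerate cases $a\in\{b,b+1\}$ give the empty partition $\tau_0$, which you should mention explicitly but which causes no harm). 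Your fallback Young-diagram argument is equally valid --- absence of $2$-hooks is absence of removable dominoes, which forces $\lambda_i-\lambda_{i+1}=1$ for all $i$ and final part $1$ --- and is arguably the cleaner self-contained route; the abacus route, however, has the advantage of simultaneously establishing Lemma \ref{2coreab}, which the paper also states without proof and relies on later (e.g., in Lemma \ref{tau}).
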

\begin{lemma} \label{2coreab}The minimal $X$ for the 2-core $\tau_k$ is $\{1,3,5,\cdots, 2k-3, 2k-1\}.$ In other words, the 2-core partitions are sequence of alternating spacers-and-beads of length $2k-1$.
\end{lemma}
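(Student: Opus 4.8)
The plan is to verify the claimed bead-set directly rather than to extract first-column hooks from the diagram, because Lemma \ref{onebead} already gives a clean way to recover a partition from any bead-set. By Lemma \ref{2core} every $2$-core is a staircase $\tau_k$, so it suffices to identify the minimal bead-set of $\tau_k$ and to check that it has the asserted alternating form.

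First I would take $X=\{1,3,5,\dots,2k-1\}$ as a candidate and confirm that it meets the minimality convention: the smallest position, $0$, lies outside $X$ and is therefore the leftmost spacer, which is exactly the requirement that the first space be counted as $0$. This $X$ consists of $k$ beads occupying the odd positions $1,3,\dots,2k-1$, with spacers at the even positions $0,2,\dots,2k-2$; read from $0$ this is precisely the promised sequence of alternating spacers and beads, ending with the bead at $2k-1$.

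Next I would apply Lemma \ref{onebead} to read off the partition encoded by $X$. Labelling the beads from largest to smallest, the $j$-th bead sits at position $2k-2j+1$ for $1\le j\le k$. The spacers to its left are exactly the even positions $0,2,\dots,2k-2j$, of which there are $k-j+1$ (every odd position below it is itself a bead); hence by Lemma \ref{onebead} the corresponding component is $\lambda_j=k-j+1$. Letting $j$ run from $1$ to $k$ produces $(k,k-1,\dots,1)=\tau_k$, so $X$ is indeed a bead-set for $\tau_k$, and since it already satisfies the minimality convention it is the minimal one. Combined with Lemma \ref{2core}, this gives both assertions of the statement.

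The argument is essentially bookkeeping, so I do not anticipate a real obstacle; the only places demanding care are conventional. I must read the beads in decreasing order so that the largest bead yields the longest row, and I must handle the off-by-one correctly when counting the even-position spacers weakly below $2k-2j+1$. As an independent consistency check I would recompute the first-column hook lengths of $\tau_k$ from its Young diagram directly: the first-column box in row $i$ has arm $k-i$ and leg $k-i$, hence hook length $2(k-i)+1$, and as $i$ ranges over $1,\dots,k$ these are exactly $2k-1,2k-3,\dots,1$, matching $X$ and confirming the computation.
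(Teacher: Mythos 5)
Your proof is correct, but note that the paper itself offers no argument to compare against: Lemma \ref{2coreab} appears in Section 2.5, which the paper explicitly prefaces with ``the results in this section are stated without proof,'' deferring to Section 2 of Olsson's notes \cite{O}. Your verification is sound and self-contained: taking $X=\{1,3,\dots,2k-1\}$, checking that $0$ is a spacer (so $X$ satisfies the minimality convention), and then using Lemma \ref{onebead} to count the $k-j+1$ even-position spacers below the bead at $2k-2j+1$ correctly recovers the components $(k,k-1,\dots,1)=\tau_k$. One observation: what you relegate to a ``consistency check'' at the end is actually the shortest route to the result. Since the minimal bead-set (zero shift in the paper's definition) \emph{is} the set of first-column hook lengths, computing that the hook of the first-column box in row $i$ of $\tau_k$ has length $2(k-i)+1$ immediately yields $X=\{2k-1,2k-3,\dots,1\}$, with no need to invert the correspondence via Lemma \ref{onebead}. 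Your longer route buys a genuine double-check (you verify the bead-set both forwards and backwards), but either half of your argument alone would suffice.
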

\subsection{The $s$-abacus}
Given a fixed integer $s$, we can arrange the nonnegative integers
in an array of columns and consider the columns as runners.
\vs
\[
\begin{array}{cccc}
ms &  & &  (m+1)s-1\\
\vdots  &   & \ddots &\\
s      &  s+1   &  &  2s-1\\
0      &  1     & \cdots &  s-1
\end{array}
\vs
\]
The column containing $i$ for $0\leq i \leq{s-1}$ will be called {\it runner i}. The positions $0,1,2,\cdots$ on the $i$th runner corresponding to $i,i+s,i+2s,\cdots$ will be called {\it i-positions}. Placing a bead at position $x_{j}$ for each $x_{j}\in{X}$ gives the $\textit{s-abacus diagram}$ of $X$. A {\it normalized} abacus will be one whose bead-set $X$ is normalized, a {\it minimal} abacus is one in which $X$ is minimal (or, the first spacer is counted as the zero position).
\subsection{The $s$-core and $s$-quotient}
By removing a sequence of $s$-hooks from $\lambda$ until no $s$-hooks remain, one obtains its {\it s-core} $\lambda^0$. The $s$-abacus of $\lambda^0$ can be found from the $s$-abacus of $\lambda$ by pushing beads in each runner down as low as they can go (Theorem 2.7.16,\cite{J-K}: we have changed the orientation). This implies $\lambda^0$ is unique since it is independent of the way the $s$-hooks are removed.
\vs
For $0\leq{i}\leq{p-1}$ let $X_{i}=\{j:i+js\in{X}\}$ and let $\lambda_{(i)}$ be the
partition represented by the bead-set $X_{i}$.  The
{\it s-quotient} of $\lambda$ is the sequence $(\lambda_{(0)},\cdots,\lambda_{(s-1)})$ obtained from $X$. The next lemma is Proposition 3.5 in \cite{O}.
\begin{lemma} \label{obij}
Let $\lambda$ be a partition with $s$-core $\lambda^0$ and $s$-quotient $(\lambda_{(i)})$, $0\leq i \leq s-1$. Then
\begin{enumerate}
\item Every 1-hook in $\lambda_{(i)}$ corresponds to a $s$-hook in $\lambda$ for $0\leq i\leq s-1.$
\item $n=|\lambda^0|+\sum_{i} |\lambda_{(i)}|.$
\end{enumerate}
\end{lemma}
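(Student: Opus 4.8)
The plan is to prove both parts at once by reading off the effect of $s$-hook removal directly on the $s$-abacus of the (normalized) bead-set $X$ of $\lambda$, reducing every $s$-hook of $\lambda$ to a single-box move on one runner. Throughout I would use the hook--bead dictionary recalled in the preliminaries, together with the abacus description of the $s$-core via \cite{J-K}.

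First I would recall that an $s$-hook of $\lambda$ is precisely a bead $x\in X$ together with a spacer $x-s\notin X$, and that removing it slides the bead from $x$ down to $x-s$. Since $x$ and $x-s$ lie on the same runner $i\equiv x\pmod s$, each such removal is confined to a single runner and leaves the beads on all other runners fixed.

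Next I would translate runner-$i$ data into the component $\lambda_{(i)}$. By the definition $X_i=\{j: i+js\in X\}$, a bead at $i+js$ whose neighbour $i+(j-1)s$ is a spacer corresponds exactly to an index $j\in X_i$ with $j-1\notin X_i$, that is, to a $1$-hook (a removable box) of $\lambda_{(i)}$; moreover sliding the abacus bead down by $s$ matches sliding the bead of $X_i$ down by $1$. This is precisely the correspondence asserted in part (1). Iterating, the reduction of $\lambda$ to its $s$-core amounts to pushing every bead as low as it will go on each runner independently, which is exactly the abacus recipe for $\lambda^0$; hence emptying all the quotient components $\lambda_{(i)}$ realizes the full reduction of $\lambda$ to $\lambda^0$.

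Part (2) then follows by counting. Under the correspondence above, the number of $s$-hooks removed in passing from $\lambda$ to $\lambda^0$ equals the total number of boxes $\sum_i|\lambda_{(i)}|$ of the quotient, and each removal deletes $s$ cells of $[\lambda]$; comparing sizes yields the stated relation among $n$, $|\lambda^0|$, and the $|\lambda_{(i)}|$ (one may instead compute each $|\lambda_{(i)}|$ directly from $X_i$ via Lemma \ref{onebead}). The one step needing genuine care --- and the main obstacle --- is checking that the bijection of part (1) is compatible with the order of removals: one must verify that any legal sequence of $s$-hook removals on $\lambda$ projects to legal box-removal sequences on the separate runners and conversely. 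This holds because moves on distinct runners commute, while within a single runner the admissible bead-moves are exactly those that remove corners of $\lambda_{(i)}$, so the two removal processes can be interleaved freely.
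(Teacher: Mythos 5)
The paper never proves this lemma at all: it is quoted verbatim as Proposition 3.5 of Olsson's notes \cite{O}, so there is no internal argument to compare with. Your abacus proof is, in substance, the standard proof behind the cited result, and it is sound: $s$-hooks of $\lambda$ are exactly the bead--spacer pairs $(x,x-s)$ in $X$; such a pair lies on a single runner $i\equiv x \pmod s$, where it becomes the pair $(j,j-1)$ for the bead-set $X_i$, i.e.\ a $1$-hook of $\lambda_{(i)}$; and pushing beads down realizes the passage to $\lambda^0$. Your worry about interleaving removal orders is handled correctly (moves on distinct runners commute), though it is not strictly needed here, since the uniqueness of the $s$-core --- which the paper takes from James--Kerber via the abacus description --- already makes the count of removals independent of the order.

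One point does need fixing. Your count in part (2) --- there are $\sum_i|\lambda_{(i)}|$ removals and each deletes $s$ cells --- yields $n=|\lambda^0|+s\sum_i |\lambda_{(i)}|$, which is \emph{not} the relation as printed in the lemma; the printed formula omits the factor $s$ and is false as written. (Take $\lambda=(2)$ and $s=2$: then $\lambda^0=\emptyset$ and $\sum_i|\lambda_{(i)}|=1$, so the printed formula gives $1\neq 2$.) Indeed the paper itself uses the corrected version later, in the proof of Lemma \ref{kappaispi}, where it computes $\sum_i|\lambda_{(i)}|$ and then multiplies by $s$. So your argument is right, but your closing sentence, which claims the count ``yields the stated relation,'' papers over exactly this discrepancy: you should state the conclusion as $n=|\lambda^0|+s\sum_i|\lambda_{(i)}|$ and flag the missing factor of $s$ in the lemma's statement as a typo, rather than assert agreement with a formula your own argument contradicts.
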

Lemma \ref{obij} implies that there exists a bijection between a partition $\lambda$ and its $s$-core and $s$-quotient, such that each node in some $\lambda_{i}$ corresponds to an $s$-hook in $\lambda.$ The situation is strengthened when $\lambda$ is self-conjugate.
\begin{lemma}\label{sym_quo} 
Suppose $|X|=0\pmod{s}.$ Let $\lambda^*$ be the conjugate of $\lambda$, $(\lambda^*)^0$ its $s$-core and let $(\lambda_i^*)$ be the $s$-quotient of $\lambda^*$, $0\leq i\leq s-1$. Then
\begin{enumerate}
\item $(\lambda^*)^0=(\lambda^0)^*$
\item $(\lambda_{(i)})^*=\lambda_{(s-1-i)}$.
\end{enumerate}
In particular, $\lambda=\lambda^{*}$ if and only if $\lambda^0=(\lambda^0)^*$ and $(\lambda_{(i)})^*=(\lambda^*)_{(i)}.$
\end{lemma}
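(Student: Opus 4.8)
The plan is to reduce the whole lemma to a single description of conjugation on the $s$-abacus: passing from $\lambda$ to $\lambda^*$ amounts to reflecting the bead-set through its midpoint while simultaneously interchanging beads and spacers. First I would fix a bead-set $X$ for $\lambda$ occupying the positions $\{0,1,\dots,ms-1\}$; this is possible exactly because the hypothesis $|X|\equiv 0\pmod s$ lets us normalize so that the abacus has an integral number $m$ of rows on each of the $s$ runners. Using the identification before Lemma \ref{onebead} of the hooks of $\lambda$ with pairs (bead, earlier spacer), together with the reading of the boundary path of $[\lambda]$, I would establish that the bead-set of $\lambda^*$ is $X^*=\{\,ms-1-y:\ y\in\{0,\dots,ms-1\}\setminus X\,\}$, the reflection of the complement of $X$. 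The arithmetic identity $ms-1-(i+js)=(s-1-i)+(m-1-j)s$ is the engine of everything that follows: it shows the reflection $x\mapsto ms-1-x$ sends the $j$-position of runner $i$ to the $(m-1-j)$-position of runner $s-1-i$.

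For part (2) I would read $X^*$ runner by runner. Since $X_i=\{j:i+js\in X\}$ records $\lambda_{(i)}$, the displayed identity shows that the beads of $X^*$ lying on runner $s-1-i$ are exactly the reflection $j\mapsto m-1-j$ of the complement of $X_i$ inside $\{0,\dots,m-1\}$. But reflect-and-complement within a single runner is precisely the bead-set form of conjugation applied to the one partition $\lambda_{(i)}$, whence $(\lambda^*)_{(s-1-i)}=(\lambda_{(i)})^*$, which is assertion (2) after reindexing. For part (1), recall from the excerpt that the $s$-core is obtained by sliding the beads on each runner down as far as possible, so $\lambda^0$ is governed runner-by-runner solely by the bead-counts $k_i=|X_i|$. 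Conjugation turns $k_i$ beads on runner $i$ into $m-k_i$ beads on runner $s-1-i$; I would then check that the lowest-form configuration of $X^*$ computed from these counts coincides with what one obtains by conjugating the lowest-form configuration of $\lambda^0$, giving $(\lambda^*)^0=(\lambda^0)^*$.

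The closing equivalence follows formally: by Lemma \ref{obij} a partition is determined by its $s$-core and $s$-quotient, so $\lambda=\lambda^*$ holds precisely when $\lambda^0=(\lambda^*)^0$ and $\lambda_{(i)}=(\lambda^*)_{(i)}$ for all $i$, and substituting parts (1) and (2) rewrites these as the stated self-conjugacy conditions on the core and on the quotient. I expect the principal difficulty to be not conceptual but bookkeeping: establishing the formula for $X^*$ cleanly, and then tracking the two simultaneous effects of the reflection — the index reversal $i\mapsto s-1-i$ among the runners and the complementation within each runner — without off-by-one errors. The hypothesis $|X|\equiv 0\pmod s$ is doing real work here, since it is exactly what forces the number of positions to be a multiple of $s$ (so that $|X^*|=ms-|X|$ remains a multiple of $s$ as well) and hence keeps the runner relabeling an exact reversal rather than a shift.
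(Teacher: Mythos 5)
The paper offers no proof of this lemma to compare against: it is stated as known, imported (like Lemma \ref{obij}, which is Proposition 3.5 of \cite{O}) from Olsson's notes. Your argument is the standard abacus proof of that known result, and it is correct: the reflect-and-complement description $X^*=\{\,ms-1-y:\ y\notin X\,\}$ of conjugation on bead-sets, the position identity $ms-1-(i+js)=(s-1-i)+(m-1-j)s$, and the observation that pushing beads down commutes with reflect-and-complement give (1) and (2), after which the core--quotient bijection (which the paper itself asserts immediately after Lemma \ref{obij}) yields the closing equivalence. Two fine points are worth recording. First, what you actually prove in part (2) is $(\lambda^*)_{(s-1-i)}=(\lambda_{(i)})^*$; the paper's printed statement $(\lambda_{(i)})^*=\lambda_{(s-1-i)}$ is missing a star on the right-hand side --- as literally written it is a claim about the quotient of $\lambda$ alone, which is false in general and is exactly the self-conjugacy condition appearing in the ``in particular'' clause. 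Your version is the correct reading, and your derivation of the closing equivalence from it is the right one. Second, your final remark slightly misplaces where the hypothesis acts: the number of abacus positions can always be chosen to be a multiple of $s$, whatever $|X|$ is; what $|X|\equiv 0 \pmod{s}$ actually buys is that both $X$ and $X^*$ (of cardinality $ms-|X|$) are normalized, so that the $s$-quotients of $\lambda$ and of $\lambda^*$ both carry their canonical labelling and the runner correspondence is the exact reversal $i\mapsto s-1-i$ rather than a cyclic shift of it. Since the body of your argument uses the hypothesis in precisely this way, that is a quibble of phrasing, not a gap.
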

\subsection{The axis of symmetry}
The following results and their proofs can be found in Section 4, \cite{N}.
\begin{proposition}\label{halfint} Suppose $\lambda$ is a partition of $n$ and
let $X$ be a bead-set for $\lambda$. Then there exists a
half-integer $\theta(\lambda)$ such that the number of beads to the
right of $\theta(\lambda)$ equals the number of spaces to its left.  Conversely, given a bead-spacer sequence and a half-integer $\theta(\lambda)$ such that the number of beads to the right equals the number of spaces to the left, one can recover the unique partition $\lambda.$
\end{proposition}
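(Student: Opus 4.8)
The plan is to treat the difference between the two counts as a monovariant in the half-integer position and to apply a discrete intermediate-value argument; the recovery in the converse then follows from Lemma \ref{onebead} once the balance point has fixed the placement of the origin. For a half-integer $\theta$ I would set
\[
f(\theta)=\bigl(\text{number of beads at positions}>\theta\bigr)-\bigl(\text{number of spacers at positions}<\theta\bigr).
\]
Since the bead-set is finite, both counts are finite for every half-integer $\theta$, so $f$ is well-defined. I would then record the two boundary behaviours: at $\theta=-\tfrac12$ every bead lies to the right and no spacer lies to the left, so $f(-\tfrac12)$ equals the total number of beads, which is nonnegative; and as $\theta\to+\infty$ the beads to the right are exhausted while the spacers to the left grow without bound, so $f(\theta)\to-\infty$.

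The crux of the existence argument is the step computation: as $\theta$ increases from $j-\tfrac12$ to $j+\tfrac12$, crossing the single integer position $j$, I claim $f$ drops by exactly $1$. Indeed, if $j$ is a bead it leaves the right-hand region while the left-hand spacer count is unchanged, so $f$ decreases by $1$; if $j$ is a spacer it joins the left-hand region while the right-hand bead count is unchanged, so again $f$ decreases by $1$. Hence $f$ is strictly decreasing with unit steps, begins nonnegative, and eventually becomes negative, so it takes the value $0$ at exactly one half-integer. That half-integer is $\theta(\lambda)$, and the required balance holds there by definition; this gives both existence and uniqueness (the empty partition being the degenerate case $\theta=-\tfrac12$).

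For the converse I would regard a bead-spacer sequence as a pattern given only up to translation, so that the datum needed to pin down a partition is the placement of the origin, equivalently the normalization. The role of $\theta$ is to supply exactly this: I would align the pattern so that the prescribed balance—beads to the right equal spacers to the left—holds at $\theta$, which by the step computation above fixes the translation uniquely. With the origin fixed, Lemma \ref{onebead} reads off the size of each component as the number of spacers lying to the left of the corresponding bead, reconstructing a well-defined $\lambda$. Finally I would check consistency: passing from $\lambda$ to a different bead-set by adjoining leading beads shifts the pattern and $\theta$ together by the same amount, so the recovered partition does not depend on the chosen representative; this shows that $\lambda\mapsto(\text{pattern},\theta(\lambda))$ is inverted by the recovery, and in particular that $\lambda$ is recovered uniquely.

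I expect the main obstacle to be the converse rather than the existence. The monovariant argument is clean and essentially forced, but making precise that the pair $(\text{pattern},\theta)$ is a complete and translation-covariant encoding—so that the recovered partition is genuinely unique and independent of normalization—requires the bookkeeping in the last step, namely tracking how $\theta$ moves when leading beads are adjoined.
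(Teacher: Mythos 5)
Your argument is correct, but note that the paper does not actually prove this proposition: it is stated under the remark that ``the following results and their proofs can be found in Section 4, \cite{N}'', so the in-text ``proof'' is a citation, and your self-contained argument is a genuine alternative to it rather than a reproduction of it. Your discrete intermediate-value argument is clean and buys more than the statement asks for: since $f$ drops by exactly one at each integer crossing, starts at $|X|\geq 0$, and tends to $-\infty$, it vanishes at exactly one half-integer, namely $\theta(\lambda)=|X|-\tfrac{1}{2}$ (number of beads minus one half); this gives uniqueness of the axis and its explicit location, which meshes with how the paper later uses the axis (e.g.\ Lemma \ref{selfquohalfint}, where each runner with $m$ beads has $\theta=m-\tfrac{1}{2}$). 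On the converse your substance is also right: recovery is exactly Lemma \ref{onebead}, and the only ambiguity in representing a fixed partition by a bead-set is the adjunction of initial beads, which your final consistency check shows leaves the recovered partition unchanged. One sentence does overstate, though: the balance condition does not ``fix the translation uniquely,'' since prepending beads shifts the pattern and the marked axis together while preserving the balance; what the balance condition genuinely rules out is prepending spacers, which would change every component. Since your last paragraph handles precisely the bead-prepending ambiguity, the conclusion that $\lambda$ is recovered uniquely stands.
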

\noindent
\begin{lemma} Let $X$ be a minimal bead-set for $\lambda$. If $x'\in X$ is the entry with maximum value, $\theta(\lambda)=\frac{x'}{2}.$
\end{lemma}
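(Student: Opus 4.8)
The plan is to verify the defining balance condition of Proposition \ref{halfint} at the single value $x'/2$ and then invoke uniqueness. Recall that $\theta(\lambda)$ is characterized there as the unique half-integer for which the number of beads lying strictly to its right equals the number of spacers lying strictly to its left. Thus it suffices to exhibit this balance at $x'/2$, after which uniqueness identifies $\theta(\lambda)$ with $x'/2$. Because $X$ is minimal, position $0$ is a spacer and all $|X|$ beads occupy positions in $\{1,\dots,x'\}$, with $x'\in X$ the largest; Lemma \ref{onebead} lets me translate any count of spacers into partition data if needed.

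The engine of the argument is the reflection $\sigma\colon p\mapsto x'-p$ of the segment $\{0,1,\dots,x'\}$ about its midpoint $x'/2$, which interchanges the two sides of $x'/2$. First I would establish the key symmetry $p\in X\iff x'-p\notin X$ for all $0\le p\le x'$. Granting this, $\sigma$ restricts to a bijection carrying each bead in $(x'/2,x']$ to a spacer in $[0,x'/2)$ and conversely; since the beads in $(x'/2,x']$ are exactly the beads to the right of $x'/2$ and the spacers in $[0,x'/2)$ are exactly the spacers to its left, the two counts coincide. The balance condition of Proposition \ref{halfint} therefore holds at $x'/2$, and by uniqueness $\theta(\lambda)=x'/2$. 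I would also note that the symmetry already pins down the parity of $x'$: the pair $(0,x')$ has $0\notin X$ a spacer and $x'\in X$ a bead, so $x'$ is odd and $x'/2$ is a genuine half-integer, consistent with Proposition \ref{halfint}.

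The hard part will be the symmetry $p\in X\iff x'-p\notin X$, which is the whole content of the lemma. This is precisely the abacus incarnation of conjugation: reading the bead--spacer pattern on $\{0,\dots,x'\}$ backwards and exchanging the two symbols produces the minimal bead-set of $\lambda^{*}$, so the pattern is self-complementary under $\sigma$ exactly when $\lambda=\lambda^{*}$. In the setting at hand this self-conjugacy is exactly what is available --- the maximal cores we study are self-conjugate by Theorem \ref{s2t2}, and this is the standing hypothesis of the axis-of-symmetry discussion of \cite{N}. I would prove the interchange either from this conjugation description together with $\lambda=\lambda^{*}$ (in the spirit of Lemma \ref{sym_quo}), or by the direct route of comparing the first-column hook lengths $\{\lambda_i+\ell-i\}$ of $\lambda$ with those of $\lambda^{*}$ and checking that self-conjugacy makes the bead positions and spacer positions of $[0,x']$ swap under $\sigma$. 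Once this interchange is in hand the balance, and hence $\theta(\lambda)=x'/2$, is immediate.
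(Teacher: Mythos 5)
The paper never writes out a proof of this lemma: it and the rest of the axis-of-symmetry subsection are cited from Section 4 of \cite{N}, so your proposal can only be measured against the natural argument in that setting. Measured that way, the decisive problem is that your proof assumes self-conjugacy of $\lambda$, which is not a hypothesis of the lemma: the statement is asserted for the minimal bead-set of an \emph{arbitrary} partition. You acknowledge the import by calling self-conjugacy ``the standing hypothesis,'' but nothing in the statement licenses it, and your argument yields nothing for, say, $\lambda=(3,1)$. What saves you is that no argument could: the statement as printed is false in general. For $\lambda=(3,1)$ the minimal bead-set is $X=\{1,4\}$, so $x'/2=2$, while the unique balanced half-integer of Proposition \ref{halfint} is $\theta(\lambda)=\tfrac{3}{2}$ (one bead, namely $4$, to the right of $\tfrac{3}{2}$; one spacer, namely $0$, to its left). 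So the lemma is really a statement about self-conjugate partitions --- the standing context of \cite{N}, and the only case in which this paper ever applies it (to $\kappa_{s,t}$ and to the $2$-cores $\lambda_{(i)}$). Your instinct to reach for self-conjugacy is thus the right diagnosis, but it must be flagged as a correction to the hypotheses, not folded in as ambient context; as written, the statement you prove and the statement you claim to prove are different.

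Two further points, one error and one simplification. The error: the parity of $x'$ does not follow from the pair $(0,x')$ being a spacer--bead pair; the correct argument is that if $x'$ were even, the reflection $p\mapsto x'-p$ would fix the integer point $x'/2$, and your symmetry would force that single position to be both a bead and a spacer. The simplification: the anti-palindromic symmetry $p\in X\iff x'-p\notin X$ (which is equivalent to $\lambda=\lambda^*$) is strictly stronger than what the conclusion needs. A direct count shows that for \emph{any} minimal bead-set, $\theta(\lambda)=|X|-\tfrac{1}{2}$: if $b$ beads lie among positions $0,\dots,|X|-1$, then exactly $|X|-b$ beads lie to the right of $|X|-\tfrac{1}{2}$ and exactly $|X|-b$ spacers lie to its left; uniqueness holds because the difference of the two counts strictly decreases by one each time $\theta$ crosses a position. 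Consequently $\theta(\lambda)=x'/2$ holds if and only if $x'=2|X|-1$, i.e.\ (using Lemma \ref{onebead}) if and only if $\lambda_1$ equals the number of parts of $\lambda$ --- a condition implied by self-conjugacy but genuinely weaker than it (e.g.\ $\lambda=(3,2,2)$ satisfies it). This counting route is shorter, pins down the exact hypothesis under which the lemma is true, and dispenses with the conjugation machinery entirely.
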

We call $\theta(\lambda)$ the {\it axis} of $\lambda.$ If $\lambda$ is self-conjugate we say $X$ has a {\it axis of
symmetry}. 
\begin{corollary} \label{selfhalfint} Le $X$ be a bead-set for $\lambda$. Then $\lambda$ is a self-conjugate partition if and only if there exists a half-integer $\theta(\lambda)$
such that beads and spaces in $X$ to the right of $\theta(\lambda)$ are reflected respectively to spaces and beads in $X$ to its left. 
\end{corollary}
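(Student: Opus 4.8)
\emph{The plan} is to realize conjugation as an explicit symmetry of the abacus and then read the statement off Proposition~\ref{halfint}. I will work with the minimal bead-set $X$, whose largest entry is the principal hook length $x'=|h_{11}|$; by the lemma identifying the axis for a minimal bead-set we have $\theta(\lambda)=x'/2$, and the reflection $y\mapsto 2\theta(\lambda)-y=x'-y$ is an involution of the window $\{0,1,\dots,x'\}$ that fixes no position precisely when $x'$ is odd. The one fact I need is the standard abacus description of conjugation: a position $p$ carries a bead of $\lambda^{*}$ if and only if $x'-p$ carries a spacer of $\lambda$; equivalently, the bead-set $X^{*}$ of $\lambda^{*}$ is obtained from $X$ by reflecting about $\theta(\lambda)$ and interchanging beads with spacers. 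I would justify this directly from Lemma~\ref{onebead}: the part attached to a bead of $X^{*}$ at position $p$ is the number of spacers of $X^{*}$ below $p$, which unwinds to the number of beads of $X$ above $x'-p$, and a short count shows this equals $\lambda^{*}_{j}=\#\{i:\lambda_i\ge j\}$; alternatively it can be cited from the border-path description in \cite{O}.

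With this in hand the corollary is almost immediate. Since $X^{*}$ is the reflect-and-swap image of $X$ about $\theta(\lambda)$, we have $\lambda=\lambda^{*}$ if and only if $X=X^{*}$, that is, if and only if $p\in X\iff x'-p\notin X$ for every position $p$ of the window; this last condition says exactly that reflection about $\theta(\lambda)$ carries beads to spacers and spacers to beads, which is the asserted symmetry. For the forward implication I must also check that $\theta(\lambda)$ is a genuine half-integer, since an integer axis would fix a position that cannot be simultaneously a bead and a spacer. This is automatic: self-conjugacy gives $\lambda_1=\lambda_1^{*}$, so $x'=|h_{11}|=\lambda_1+\lambda_1^{*}-1=2\lambda_1-1$ is odd and $\theta(\lambda)=x'/2\in\mathbb{Z}+\tfrac12$.

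For the converse I would run the argument backwards: if a half-integer $\theta$ interchanges the beads and spacers of $X$ under reflection, then each bead to its right is matched with a spacer to its left and vice versa, so the number of beads to the right equals the number of spaces to the left and $\theta$ is the axis of Proposition~\ref{halfint}; the symmetry is then exactly the equality $X=X^{*}$, whence $\lambda=\lambda^{*}$ by the uniqueness clause of that proposition. The step I expect to be the real work is the abacus description of conjugation together with the bookkeeping that pins its reflection center to $\theta(\lambda)$: one must take the window to be $\{0,1,\dots,x'\}$, so that the leftmost spacer and the largest bead are mirror images and no ``phantom'' trailing spacers spoil the symmetry, and verify that the balance condition defining $\theta(\lambda)$ is precisely the assertion that reflect-and-swap preserves this window. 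Once that is set up, both directions of the equivalence fall out formally.
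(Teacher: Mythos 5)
Your proposal is correct, but there is nothing in the paper to compare it against: Corollary \ref{selfhalfint} is one of the results the paper imports without proof (``The following results and their proofs can be found in Section 4, \cite{N}''), so the paper's route is a citation and yours is an actual argument. Your key ingredient --- that on the minimal bead-set $X$ with largest bead $x'=|h_{11}|$, conjugation acts as the involution $p\mapsto x'-p$ composed with exchanging beads and spacers --- is the right one, and your justification via Lemma \ref{onebead} does go through: the part attached to the $j$-th largest bead of the reflected-and-complemented set counts the beads of $X$ lying above the $j$-th smallest spacer of $X$, and a part $\lambda_i$ satisfies $\lambda_i\ge j$ exactly when its bead lies above that spacer, so this count is $\#\{i:\lambda_i\ge j\}=\lambda^*_j$. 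Two pieces of bookkeeping that you flag but do not execute should be written out. First, the corollary is stated for an arbitrary bead-set while you use the minimal one; this is harmless, since prepending $m$ beads translates the window and shifts the axis by $m$, but it should be said. Second, and more substantively, in the converse direction the phrase ``$\theta$ is the axis of Proposition \ref{halfint}'' does not by itself give $2\theta=x'$: for a non-self-conjugate partition the balanced half-integer need not be $x'/2$ (for $\lambda=(2)$, $X=\{2\}$, the balanced point is $1/2$ while $x'/2=1$). What pins $\theta$ down is the symmetry hypothesis itself: position $0$ is a spacer, so its mirror $2\theta$ must be a bead, forcing $2\theta\le x'$, while the bead at $x'$ must have a nonnegative mirror spacer, forcing $2\theta\ge x'$; hence $2\theta=x'$ and the hypothesis becomes exactly $p\in X\iff x'-p\notin X$, i.e.\ $X=X^*$, whence $\lambda=\lambda^*$. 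With that line added your argument is complete, and it has the virtue of making the result self-contained at a point where the paper leans entirely on an external reference.
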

When $\lambda^0=\emptyset,$ each $\lambda_{i}$ has an axis of symmetry $\theta(\lambda_{i})$ induced by $X$.
\begin{lemma} \label{selfquohalfint} Suppose
$X$ is normalized. Then $|X|=ms,$ $\lambda^0=\emptyset$, and each runner has exactly $m$ beads if and only if
$\theta(\lambda_{(i)})=\theta(\lambda_{(i')})=m-\frac{1}{2}$ for all $0\leq i,i' \leq s-1.$ 
\end{lemma}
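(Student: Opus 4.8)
The plan is to reduce both implications to a single explicit computation of the induced axis: I claim that for each runner $i$ one has $\theta(\lambda_{(i)})=n_i-\tfrac12$, where $n_i=|X_i|$ denotes the number of beads on runner $i$. Observe first that normalization only asserts $|X|=ms$ for some $m$, and that $\sum_{i=0}^{s-1}n_i=|X|=ms$; the distribution of the beads among the runners is not yet constrained, so the content of the lemma is precisely to pin that distribution down via the axes.

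To establish the claim I would apply the defining property of the axis from Proposition \ref{halfint} to the induced bead-set $X_i$: $\theta(\lambda_{(i)})$ is the unique half-integer at which the number of beads of $X_i$ to its right equals the number of spacers to its left. Sweeping a trial axis $c+\tfrac12$ to the right by one unit changes these counts monotonically: crossing a bead lowers ``beads to the right'' by one, while crossing a spacer raises ``spacers to the left'' by one, so the difference (beads to the right) $-$ (spacers to the left) decreases by exactly $1$ at each step. To the left of position $0$ this difference equals $n_i$, hence it vanishes precisely at $c+\tfrac12=n_i-\tfrac12$, giving $\theta(\lambda_{(i)})=n_i-\tfrac12$. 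Equivalently, adding or deleting $1$-hooks inside $\lambda_{(i)}$ (which by Lemma \ref{obij} are exactly the $s$-hooks of $\lambda$ carried by that runner) moves beads while preserving $n_i$, and therefore cannot move the axis; so $\theta(\lambda_{(i)})$ coincides with the axis of the pushed-down configuration $\{0,1,\dots,n_i-1\}$, which is manifestly $n_i-\tfrac12$. This independence of the axis from the \emph{shape} of $\lambda_{(i)}$ is the crux of the argument, and the step I expect to require the most care, since it is what lets a single half-integer $m-\tfrac12$ record nothing but the bead count on each runner.

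Granting the claim, both directions are short. For the reverse implication, if $\theta(\lambda_{(i)})=m-\tfrac12$ for all $i$, then $n_i-\tfrac12=m-\tfrac12$ forces $n_i=m$, so every runner carries exactly $m$ beads; pushing all beads down then fills positions $0,\dots,m-1$ on each runner, so the full bead-set becomes $\{0,1,\dots,ms-1\}$, and by Lemma \ref{onebead} no bead has a spacer to its left, whence this is the bead-set of $\emptyset$ and $\lambda^0=\emptyset$. For the forward implication, the hypotheses give $n_i=m$ for every $i$ directly, and the claim returns $\theta(\lambda_{(i)})=m-\tfrac12$ for all $i$. Finally I would note, to justify the internal consistency of the left-hand side, that given $|X|=ms$ the clauses ``$\lambda^0=\emptyset$'' and ``each runner has exactly $m$ beads'' are themselves equivalent: an uneven distribution leaves a spacer below some bead after the push-down, forcing $\lambda^0\neq\emptyset$, while an even distribution yields $\{0,1,\dots,ms-1\}$ as above.
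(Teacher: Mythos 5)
Your proof is correct, but strictly speaking there is no internal proof to compare it with: the paper states Lemma \ref{selfquohalfint} without proof, deferring (along with Proposition \ref{halfint} and Corollary \ref{selfhalfint}) to Section 4 of \cite{N}. So your argument is necessarily an independent route, and it is a good one. Its crux is the formula $\theta(\lambda_{(i)})=n_i-\tfrac12$, where $n_i=|X_i|$ is the bead count of runner $i$: the axis induced by $X$ records nothing about the shape of $\lambda_{(i)}$, only how many beads sit on the runner. Your sweep argument proves this cleanly --- the quantity (beads to the right) $-$ (spacers to the left) equals $n_i$ at trial position $-\tfrac12$ and drops by exactly one per unit step, since positions are non-negative --- and, importantly, the strict monotonicity also gives \emph{uniqueness} of the half-integer axis, which is what the reverse implication silently needs in order to conclude $n_i=m$ from $\theta(\lambda_{(i)})=m-\tfrac12$. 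After that, both directions are indeed bookkeeping, and your closing observation that, given $|X|=ms$, the clauses ``$\lambda^0=\emptyset$'' and ``each runner has exactly $m$ beads'' are equivalent (via the push-down description of the $s$-core and Lemma \ref{onebead}) correctly explains why the left-hand side of the lemma is consistent rather than overdetermined. Two minor remarks. First, your parenthetical alternative --- that removing $1$-hooks ``preserves $n_i$ and therefore cannot move the axis'' --- is, as literally written, circular, since the independence of the axis from everything but $n_i$ is exactly the claim at issue; it can be repaired by checking that moving one bead a single step left across a fixed half-integer $c$ lowers both counts by one, leaving the difference unchanged, but it is dispensable because the sweep argument is already complete. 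Second, your formula is the one consistent with the half-integer convention of Proposition \ref{halfint}; note that it matches the paper's other lemma $\theta(\lambda)=\frac{x'}{2}$ (for minimal bead-sets) only when the partition is self-conjugate, where $x'=2n-1$ --- a tension internal to the paper's conventions, not a flaw in your argument, and irrelevant here since the quotients in question are exactly the self-conjugate case to which the paper applies the lemma.
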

\begin{example} The maximum $(5,7)$-core $\kappa_{5,7}$ has empty 8-core. In the normalized (minimal) 8-abacus in {\bf Figure 1}, each $\lambda_{(i)}$ has axis $\theta(\lambda_{(i)})=\frac{5}{2}.$  
\end{example}
\section{The $s$-quotient of $\kappa_{s\pm1}$}
\subsection{The $s$-abacus of $\kappa_{s\pm 1}$}
We begin with a classical result of Sylvester.
\begin{lemma} \label{syl} The largest integer in $P_{s,t}$ is $st-s-t.$
\end{lemma}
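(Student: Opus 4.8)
The plan is to recognize this as the classical Frobenius (``Chicken McNugget'') problem. By definition $P_{s,t}$ is the set of positive integers that cannot be written as $as+bt$ with $a,b\in\mathbb{N}$, i.e. the positive integers outside the numerical semigroup generated by $s$ and $t$, and under the standing coprimality hypothesis $\gcd(s,t)=1$ the claim is exactly that $st-s-t$ is the Frobenius number of these two generators. I would therefore establish two complementary facts: (i) $st-s-t$ is \emph{not} representable, so it lies in $P_{s,t}$; and (ii) every integer $N>st-s-t$ \emph{is} representable, so lies outside $P_{s,t}$. Together these identify $st-s-t$ as the largest element of $P_{s,t}$.

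For (i) I would argue by contradiction. Suppose $st-s-t=as+bt$ with $a,b\ge 0$. Reducing modulo $s$ gives $(b+1)t\equiv 0\pmod{s}$, and since $\gcd(s,t)=1$ this forces $s\mid b+1$; as $b\ge 0$ we conclude $b\ge s-1$. Symmetrically, reducing modulo $t$ forces $t\mid a+1$, hence $a\ge t-1$. But then $as+bt\ge (t-1)s+(s-1)t=2st-s-t$, which strictly exceeds $st-s-t$, a contradiction.

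For (ii) I would exploit the fact that coprimality makes $a\mapsto as \bmod t$ a bijection on residue classes: for any $N$ there is a unique $a$ with $0\le a\le t-1$ and $as\equiv N\pmod{t}$, so that $b=(N-as)/t$ is an integer. Since $as\le (t-1)s$, we get $bt=N-as\ge N-(t-1)s$; when $N\ge st-s-t+1$ this is at least $(st-s-t+1)-(st-s)=1-t>-t$, forcing $b>-1$ and hence $b\ge 0$. Thus $N=as+bt$ with $a,b\ge 0$, exhibiting $N$ as representable.

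Assembling (i) and (ii) yields the lemma. This is a standard argument, so I do not expect a genuine obstacle; the only points needing care are the systematic use of $\gcd(s,t)=1$—without it both the divisibility conclusions in (i) and the surjectivity of $a\mapsto as\bmod t$ in (ii) fail—and tracking the inequalities closely enough that the threshold comes out sharply at $st-s-t$ rather than off by a multiple of $s$ or $t$.
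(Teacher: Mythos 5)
Your proof is correct, but note that the paper itself offers no argument for this lemma at all: it is stated as ``a classical result of Sylvester'' and used as a black box. So there is no ``paper proof'' to match; what you have done is supply the standard Frobenius-number argument that the paper delegates to the literature. Both halves of your argument check out: in (i), reducing $st-s-t=as+bt$ modulo $s$ and modulo $t$ and using $\gcd(s,t)=1$ forces $b\ge s-1$ and $a\ge t-1$, giving $as+bt\ge 2st-s-t>st-s-t$, a contradiction; in (ii), choosing the unique $a\in\{0,\dots,t-1\}$ with $as\equiv N\pmod t$ and bounding $bt=N-as\ge N-(t-1)s>-t$ for $N>st-s-t$ yields $b\ge 0$. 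The only point worth making explicit is that the lemma also needs $st-s-t$ to actually lie in $P_{s,t}$, i.e.\ to be a \emph{positive} non-representable integer; this holds since $st-s-t=(s-1)(t-1)-1\ge 1$ for coprime $s,t\ge 2$ (and in the paper's application $s,t$ are $s\pm1\ge 3$), which your part (i) then shows is non-representable. With that remark, your two facts (i) and (ii) together identify $st-s-t$ as the maximum of $P_{s,t}$, exactly as claimed.
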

The Amdeberhan-Leven rectangle $R$ is constructed to begin at 0; the $(r+1)$-abacus of $\kappa_{r,r+2}$ starts at 0.  However $0\not\in P_{r,r+2}$ and by Lemma \ref{syl} neither is $(r+1)(r-1)$. Hence $R$ and the minimal $(r+1)$-abacus of $\kappa_{r,r+2}$ include the same values. 

Recall $s=r+1.$ We now interpret the Amdeberhan-Leven result in terms of the $s$-abacus $\kappa_{s-1,s+1}$. We use a runner-row index. We start with a definition.
\begin{definition}\label{pi}
Let $s=2k>2$. Then $\alpha(s)$ is an $s$-abacus with $s$ runners, indexed from left-to-right by $0\leq i\leq s-1$ and $s-2$ rows, indexed from bottom-to-top by $0\leq i\leq s-3$, which is  constructed as follows: For each $i\in[0,k-2]$, the runners $i$ and $2k-i-1$ are composed firstly of beads in rows $j$ where $0\leq j\leq i$. Then rows $j>i$ consist of alternating spacers-and-beads, until the total number of beads in each runner is $(k-1)$. Spacers fill the remainder of the rows. 
\end{definition}
\begin{example} $\alpha(8)$ has three beads in each runner. Runners $i$=3 and 4 consist of three beads below three spacers; $i=$2 and 5 have two beads followed by a spacer-and-bead, then two spacers; $i$=1 and 6 have one bead followed by spacer-bead-spacer-bead-spacer; and runners $i$=0 and 7 have an alternating sequence of spacers-and-beads. [See {\bf Figure 1}.]
\end{example}
\begin{lemma}
The $s$-abacus $\alpha(s)$ is normalized with respect to $s$.
\end{lemma}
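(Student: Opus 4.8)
The plan is to verify the two defining requirements of a normalized bead-set directly from Definition \ref{pi}: that the number of beads $|X|$ is divisible by $s$, and that the shift producing $X$ is minimal with this property.

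The first requirement is essentially a bead count. By construction each of the $s$ runners of $\alpha(s)$ is filled until it contains exactly $k-1$ beads, so $|X| = s(k-1)$, which is visibly a multiple of $s$. Before invoking this count I would check that the recipe is consistent, that is, that a runner's $k-1$ beads genuinely fit inside the $s-2 = 2k-2$ available rows. The runner demanding the most rows is runner $0$ (and its mirror $s-1$), whose beads form the alternating sequence occupying rows $1,3,\dots,2k-3$; the topmost bead therefore lands in row $2k-3 = s-3$, which is exactly the last available row. Thus every runner accommodates its $k-1$ beads, the total $|X| = s(k-1)$ is correct, and $|X| \equiv 0 \pmod{s}$.

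For the second requirement I would observe that runner $0$ begins with a spacer: it is the alternating spacer-and-bead sequence of Figure 1, namely the minimal bead-set of $\tau_{k-1}$ furnished by Lemma \ref{2coreab}. Hence position $0$ of $\alpha(s)$ is a spacer, so $\alpha(s)$ is already a minimal bead-set and its shift equals $0$. Since $|X| = s(k-1)$ is already a multiple of $s$, no positive shift is needed to meet the congruence, so $0$ is the minimal shift with $|X| \equiv 0 \pmod{s}$; this is precisely the normalization condition.

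I do not anticipate a genuine obstacle here, since the lemma only asserts that the concrete recipe of Definition \ref{pi} agrees with the abstract normalization convention. The one point that warrants care is the bead count of the second paragraph, tracking how the solidly-packed block and the alternating tail of each runner combine to yield exactly $k-1$ beads within $s-2$ rows; once this is pinned down, both the divisibility $|X| \equiv 0 \pmod{s}$ and the minimality of the shift follow immediately.
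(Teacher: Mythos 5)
Your proof is correct and rests on the same central computation as the paper's one-line proof: each of the $s=2k$ runners carries $k-1$ beads, so $|X|=2k(k-1)=\frac{s^2-2s}{2}$ is a multiple of $s$. The additional care you take with the minimality clause of the normalization definition (position $0$ of $\alpha(s)$ is a spacer, so the shift is $0$ and is trivially minimal) is sound and fills a point the paper leaves implicit here, recording it only later, in the proof of Lemma \ref{kappaispi}, where it notes that $\alpha(s)$ is minimal because the first spacer labels zero.
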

\begin{proof} The total number of beads in $\alpha(s)$ is $2k(k-1)=\frac{s^2-2s}{2}$, a multiple of $s$.
\end{proof}
\begin{lemma} \label{rowj} Fix $1<j<2k-3$ and $0\leq i< k-1.$
\begin{enumerate} 
\item There is a bead in row $j$ of runner $0$ if and only if there is a bead in row $j-1$ of runner $1$.
\item There is a bead in row $j$ of runner $2k-1$ if and only if there is a bead in row $j-1$ of runner $2k-2$.
\item There is a spacer in row $j$ of runner $0$ if and only if there is a spacer in row $j+1$ of runner $1$.
\item There is a spacer in row $j$ of runner $2k-1$ if and only if there is a spacer in row $j$ of runner $2k-2$.
\end{enumerate}
\end{lemma}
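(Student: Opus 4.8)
The plan is to read the exact bead/spacer pattern of the four runners involved straight off the construction in Definition \ref{pi}, and then to settle each equivalence by a one-line parity count. First I would record the symmetry built into $\alpha(s)$: runners $i$ and $2k-1-i$ are produced by the identical rule, so runner $2k-1$ carries beads in exactly the rows that runner $0$ does, and runner $2k-2$ in exactly the rows that runner $1$ does. It is therefore enough to describe runners $0$ and $1$. Taking the initial solid block on runner $i$ to be the $i$ rows $0,\dots,i-1$ (as in the Example and in Figure 1), runner $0$ has an empty initial block and is a pure alternating spacer--bead column, so its beads occupy precisely the odd rows $1,3,\dots,2k-3$; runner $1$ has a single bead in row $0$ and then alternates, so its beads occupy precisely the even rows $0,2,\dots,2k-4$. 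Each column indeed holds $k-1$ beads.

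With these two descriptions, parts (1)--(3) are immediate. For (1), since $1<j<2k-3$ keeps both $j$ and $j-1$ inside the row range $[0,2k-3]$, there is a bead in row $j$ of runner $0$ iff $j$ is odd, iff $j-1$ is even, iff there is a bead in row $j-1$ of runner $1$. Part (2) is part (1) pushed across the pairing symmetry, because runner $2k-1$ and runner $2k-2$ are verbatim copies of runners $0$ and $1$. Part (3) is the complementary parity statement: a spacer lies in row $j$ of runner $0$ iff $j$ is even, iff $j+1$ is odd, iff a spacer lies in row $j+1$ of runner $1$ (and $j+1\le 2k-3$ by the bound on $j$).

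The one genuine obstacle is part (4), which cannot hold as printed. By the pairing, runner $2k-1$ is a copy of runner $0$ (spacers in even rows) and runner $2k-2$ is a copy of runner $1$ (spacers in odd rows), so the spacer rows of the two columns have opposite parity; a \emph{same-row} spacer equivalence between them is therefore impossible. This is already visible in Figure 1, where row $j=2$ is a spacer on runner $7$ but a bead on runner $6$. I read the printed (4) as a typographical slip for the statement forced by symmetry with (2) and (3), in which ``row $j$ of runner $2k-2$'' is replaced by ``row $j+1$ of runner $2k-2$''; that corrected assertion is exactly part (3) transported across the pairing, hence true. I would prove this corrected form and flag the change explicitly rather than absorb it silently.

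Finally, the hypothesis $0\le i<k-1$ deserves comment, since $i$ appears in none of the four printed assertions: as stated they concern only the extreme runners $0,1,2k-2,2k-1$, and the bound on $i$ is inert. If instead one intends the $i$-parametrized statement comparing the adjacent pair $i,i+1$ (and its mirror $2k-2-i,2k-1-i$), then the clean parity dichotomy above is no longer available and must be replaced by the explicit bead set of runner $i$, namely $\{0,\dots,i-1\}\cup\{i+1,i+3,\dots,2k-3-i\}$. There the initial solid block of $i$ beads interrupts the alternation, so the equivalences have to be verified by hand at the junction between the block and the tail; I would expect that boundary bookkeeping, rather than the alternating interior, to be the real source of difficulty.
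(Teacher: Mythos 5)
Your proof is correct and takes essentially the same route as the paper's: read off from Definition \ref{pi} that runner $0$ (and its mirror, runner $2k-1$) is the pure alternating column with beads in the odd rows, that runner $1$ (and its mirror, runner $2k-2$) is the shifted copy with beads in the even rows, and settle each part by a parity comparison; the paper's proof is exactly this, stated more tersely.

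Your flag on part (4) is accurate and is the one substantive point where you go beyond the paper. As printed, (4) asserts that the spacers of runner $2k-1$ (even rows) and of runner $2k-2$ (odd rows) occupy the same rows, which fails for every $j$; your witness is confirmed by Figure 1, where position $23$ (row $2$ of runner $7$) is a spacer while position $22$ (row $2$ of runner $6$) is a bead. The paper's own proof does not notice this, since it disposes of (2) and (4) with ``a similar argument holds'' (and it also misstates runner $1$ as beginning with a bead in row $1$ rather than row $0$, as your description and Figure 1 have it). Your emendation --- replacing ``row $j$ of runner $2k-2$'' by ``row $j+1$ of runner $2k-2$'' --- is precisely the form the paper actually uses: in the proof of Lemma \ref{kappaispi}, part (4) is invoked to conclude that a spacer in row $j'$ of runner $2k+1$ forces a spacer in row $j'+1$ of runner $2k$, which is your corrected statement with $k$ replaced by $k+1$. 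Your observation that the hypothesis $0\leq i<k-1$ is inert in the four printed assertions is likewise correct; it plays no role in the proof, and your closing caution about the junction between the solid block and the alternating tail would only become relevant for an $i$-parametrized generalization, which the paper never needs.
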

\begin{proof} By Definition \ref{pi}, runner $i=0$ begins in row $j=0$ with a spacer, and continues upwards with alternating beads-and-spacers. Runner $i=1$ begins with a bead in row 1, and continues upwards, alternating spacers-and-beads. Since both columns have $2k-2$ rows, (1) and (3) follow. For (2) and (4), a similar argument holds.
\end{proof}
\begin{lemma} \label{nest}
The $s$-abacus $\alpha(s+2)$ can be obtained from the $s$-abacus $\alpha(s)$ using the following procedure:
\begin{enumerate}
\item Append a new row of $2k$ beads below $\alpha(s).$
\item Append a new row of $2k$ spacers above $\alpha(s).$
\item Append a new runner of length $2k-2$ consisting of alternating beads-and-spacers to the {\it left} (and an identical column to the {\it right}) of $\alpha(s).$ 
\item Append a single spacer to the bottom, and a single bead at the top of, both new runners in step (3). [The total number of beads in all runners, both the two new runners, as well as the $s=2k$ previous runners, will now be $k.$]
\item Renumber the runners with $i'$ so $0\leq i' \leq 2k+1$ and the rows with $j'$ so that $0\leq j'\leq 2k-1.$ Renumber the abacus positions, with 0 in the bottom left-most corner, increasing from left-to-right and bottom-to-top, with final position $(2k+1)(2k-1)$ in the upper-right-hand corner.
\end{enumerate}
\end{lemma}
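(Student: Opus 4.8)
The plan is to prove the lemma by direct comparison: I would write down, runner by runner and row by row, both the abacus $\alpha(s+2)=\alpha(2(k+1))$ specified by Definition \ref{pi} and the abacus produced by applying steps (1)--(5) to $\alpha(s)=\alpha(2k)$, and verify that they coincide. Throughout I set $k'=k+1$, so $\alpha(s+2)$ has $2k'=2k+2$ runners (indexed $0\le i'\le 2k+1$), $2k'-2=2k$ rows (indexed $0\le j'\le 2k-1$), and exactly $k'-1=k$ beads in each runner. Since both Definition \ref{pi} and the procedure in steps (1)--(4) are symmetric under the reflection $i'\mapsto 2k+1-i'$ (the procedure appends identical runners on the left and right, and Definition \ref{pi} pairs runner $i$ with runner $2k-i-1$), it suffices to check agreement on the left half, namely runners $0,1,\dots,k$.

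The first step is to record the explicit per-runner normal form dictated by Definition \ref{pi}. For $0\le i\le k-1$, runner $i$ of $\alpha(2k)$ consists of $i$ beads in rows $0,\dots,i-1$, a spacer in row $i$, and then an alternating bead-spacer tail placing beads in rows $i+1,i+3,\dots$ until the total reaches $k-1$, with spacers filling the remainder; the analogous description holds for $\alpha(2k')$ with $k-1$ replaced by $k'-1=k$. With these normal forms in hand the verification splits into two cases.

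For the two new runners (runner $0$ and, by symmetry, runner $2k+1$ of $\alpha(s+2)$) I would check that steps (3) and (4) reproduce the $i'=0$ pattern: the length-$(2k-2)$ alternating beads-and-spacers tower of step (3) occupies the middle rows $1,\dots,2k-2$ and carries $k-1$ beads, while step (4) adds a spacer in row $0$ and a bead in the top row $2k-1$, yielding beads precisely in rows $1,3,\dots,2k-1$, i.e. the $k$ beads that Definition \ref{pi} prescribes for runner $0$. The heart of the argument is the treatment of the old runners, which under the renumbering of step (5) become runners $1,\dots,2k$ of $\alpha(s+2)$. Here the key observation is a self-similarity internal to Definition \ref{pi}: the pattern for runner $i+1$ is exactly the pattern for runner $i$ shifted up by one row with an extra bead inserted at the bottom — which is precisely what steps (1) and (2) accomplish. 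Appending a bead-row below (step 1) promotes the $i$ leading beads of old runner $i$ to $i+1$ leading beads and translates its alternating tail up by one row; appending a spacer-row above (step 2) supplies the required spacer in the new top row $2k-1$; and the bead count rises from $k-1$ to $k=k'-1$. Comparing with the normal form, the image of runner $i$ is exactly runner $i+1$ of $\alpha(s+2)$, including the degenerate middle cases $i=k-1,k$ whose alternating tails are empty.

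I would close with a bead-count cross-check, $2k(k-1)+2k+2(k-1)+2=2k(k+1)=2k'(k'-1)$, confirming consistency with the count in the preceding lemma. The main obstacle I anticipate is the row-shift bookkeeping in the old-runner case: one must confirm that the parity of the alternating tail survives the upward translation, so that a bead still lands in rows $i'+1,i'+3,\dots$ relative to the new index $i'=i+1$. This parity matching is exactly what makes the ``shift up one row'' effected by step (1) line up with the ``shift right one runner'' built into Definition \ref{pi}, and getting it cleanly stated (rather than merely checked on the $s=8$ figure) is the delicate part of the argument.
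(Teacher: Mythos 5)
Your proposal is correct and takes essentially the same route as the paper: the paper's entire proof is the one-line remark that it suffices to check that the outcome of steps (1)--(5) satisfies Definition \ref{pi} for $\alpha(s+2)$, and your argument simply carries out that verification explicitly (the two new runners, the shift-by-one-row self-similarity of the old runners, and the parity of the alternating tails), together with a harmless bead-count cross-check. One small point in your favor: the row-indexing in Definition \ref{pi} as printed is off by one (runner $i$ carries its initial beads in rows $0\le j\le i-1$, not $0\le j\le i$), and your normal form adopts the reading consistent with the paper's Example and figures, which is the intended one.
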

\begin{proof} It is enough to see that the outcome satisfies Definition \ref{pi} for $\alpha(s+2).$
\end{proof}
\begin{example} To see how Lemma \ref{nest} is used to obtain $\alpha(10)$ from $\alpha(8)$, see {\bf Appendix A, Figure 9} and {\bf Figure 8}.
\end{example}
Recall $\lambda^0$ is the $s$-core partition of $\lambda,$ $(\lambda_{(i)})$ is the $s$-quotient (where $0\leq i\leq s-1$), and that $\tau_{\ell}$ the the ${\ell}$-th 2-core partition. For the following two lemmas we abuse notation and let $\alpha(s)$ refer not only to the $s$-abacus but also to its corresponding partition.
\begin{lemma} \label{tau} Suppose $s=2k>2.$ Then
\begin{enumerate}
\item $\alpha(s)^0=\emptyset$
\item $\alpha(s)_{(i)}=\alpha(s)_{(s-i-1)}=\tau_{k-i+1}.$
\end{enumerate}
\end{lemma}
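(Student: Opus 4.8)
The plan is to read both claims directly off the abacus $\alpha(s)$ of Definition \ref{pi}, using only two facts recalled in the excerpt: that the $s$-core is obtained by sliding every bead as low as possible down its runner, and that the $2$-cores are exactly the alternating spacer-and-bead sequences described in Lemma \ref{2coreab}. No removal of $s$-hooks needs to be simulated; everything is visible in the static picture.

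For part (1) I would first record that, by construction, each of the $s=2k$ runners of $\alpha(s)$ carries exactly $k-1$ beads, so $|X|=2k(k-1)=ms$ with $m=k-1$, which is the normalization already noted just before the lemma. Since $\alpha(s)^0$ is read off the abacus obtained by pushing the beads of each runner down as low as they go, and every runner holds the \emph{same} number $m=k-1$ of beads, the pushed-down configuration occupies precisely the bottom $m$ positions of every runner, i.e. the global positions $0,1,\ldots,ms-1$. That is exactly the normalized abacus of $\emptyset$, so $\alpha(s)^0=\emptyset$; this is also the content of Lemma \ref{selfquohalfint} in the case where all runners have $m$ beads.

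For part (2), the equality $\alpha(s)_{(i)}=\alpha(s)_{(s-1-i)}$ is immediate, since Definition \ref{pi} assigns runners $i$ and $2k-1-i=s-1-i$ identical bead-spacer patterns, so $X_i=X_{s-1-i}$ and the two quotient components coincide. It then remains to identify the partition on runner $i$ for $0\le i\le k-1$. Reading the runner from the bottom, its bead-set is
\[
X_i=\{0,1,\ldots,i-1\}\cup\{i+1,i+3,\ldots,2k-3-i\},
\]
an initial block of $i$ consecutive beads, a spacer at position $i$, an alternating spacer-and-bead tail supplying the remaining $(k-1)-i$ beads, and then $i$ trailing spacers up to row $2k-3$. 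The initial block is pure filler (it contributes only empty rows, by Lemma \ref{onebead}) and the trailing spacers lie above the top bead, so the partition is governed entirely by the tail. Taking position $i$ as a new origin, that tail has beads at relative positions $1,3,\ldots,2(k-1-i)-1$, which by Lemma \ref{2coreab} is the minimal bead-set of the staircase $\tau_{k-1-i}$. Hence $\alpha(s)_{(i)}=\tau_{k-1-i}$, a $2$-core by Lemma \ref{2core}, matching the quotient displayed in Figure 2.

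The only delicate point is the index bookkeeping: one must check that the $i$ bottom beads and the $i$ top spacers genuinely leave the partition unchanged, and that the alternating tail has length exactly $2(k-1-i)$ so that Lemma \ref{2coreab} applies with $\ell=k-1-i$. Applying Lemma \ref{onebead} to count the spacers lying below each tail bead yields parts $1,2,\ldots,k-1-i$ and confirms this directly; beyond careful tracking of the off-by-one in the bottom block, no real obstacle arises.
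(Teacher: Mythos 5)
Your proof is correct, and while part (1) matches the paper's argument exactly (every runner carries $k-1$ beads, so pushing beads down their runners produces the normalized abacus of $\emptyset$), your part (2) takes a genuinely different route. The paper proves (2) by induction on $k$: it uses Lemma \ref{nest} to build $\alpha(s+2)$ from $\alpha(s)$ by appending a bottom row of beads, a top row of spacers, and two new outer alternating runners; since the first two operations leave the partition carried by each old runner unchanged, the inductive hypothesis disposes of runners $1\le i'\le 2k$ of $\alpha(s+2)$, and only the two new outer runners must be checked against Lemma \ref{2coreab}. You instead work statically: you write down each runner's bead-set in closed form, $X_i=\{0,\ldots,i-1\}\cup\{i+1,i+3,\ldots,2k-3-i\}$, observe that the initial bead block and the trailing spacers are invisible to the partition, and identify the alternating tail with the minimal bead-set of $\tau_{k-1-i}$ via Lemmas \ref{onebead} and \ref{2coreab}. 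Your computation is more self-contained and explicit---it produces the parts $1,2,\ldots,k-1-i$ of each quotient component directly, and it avoids the paper's slight abuse in the inductive step (which is phrased in terms of sizes $|\cdot|$ while concluding an equality of partitions)---whereas the paper's induction buys consistency with the rest of the paper, since the same nesting machinery of Lemma \ref{nest} is reused in the proof of Lemma \ref{kappaispi}. Two remarks worth recording: first, both your argument and the paper's actually establish $\alpha(s)_{(i)}=\tau_{k-i-1}$, in agreement with Theorem \ref{piquo} and the displayed quotients, so the subscript ``$k-i+1$'' in the statement of the lemma is a typo that you have silently and correctly repaired; second, you read Definition \ref{pi} as giving runner $i$ exactly $i$ bottom beads (so that runner $0$ begins with a spacer), which is the reading forced by minimality of $\alpha(s)$ and by the figures, even though the literal wording of the definition (``beads in rows $0\le j\le i$'') is off by one---stating that correction explicitly would make your write-up airtight.
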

\begin{proof} We proof each condition separately.
\begin{enumerate}
\item Since each runner $\alpha(s)_{i}$ has $k-1$ beads and $(k-1)$ spacers, the removal of all $s$-hooks will result in an $s$-abacus with each runners having $k-1$ beads beneath $k-1$ spacers. This corresponds to the empty partition. 
\item We use induction on $k$. For $k=2$ it is true. Assume it is for $k.$ We obtain the $\alpha(s+2)$ from $\alpha(s)$ by Lemma \ref{pi}. By construction, for $1\leq i'\leq 2k$ we have $|\alpha(s)_{(i'-1)}|=|\alpha(s+2)_{(i')}|$; hence, by the inductive hypothesis and since $i+1=i'$, $|\alpha(s+2)_{(i')}|=\tau_{(k+1)-i'-1}$. It only remains to check $i'=0,2k+1.$ The proof is finished using (3) and (4) of Lemma \ref{nest} and Lemma \ref{2coreab}. 
\end{enumerate}
\end{proof}
\begin{example} $\alpha(8)$ has 8-quotient $(\lambda_0,\cdots,\lambda_{s-1})$
\[
(3,2,1),(2,1),(1),\emptyset,\emptyset,(1),(2,1),(3,2,1))
\] 
[See {\bf Appendix A, Figure 8} and {\bf Appendix B, Figure 12}]
\end{example}
\begin{lemma} \label{kappaispi} Let $s=2k>2$. Then $\alpha(s)$ is the minimal $s$-abacus for $\kappa_{s-1,s+1}.$
\end{lemma}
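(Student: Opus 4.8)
The plan is to identify the two abaci through their bead-sets. The discussion preceding Definition \ref{pi}, together with the Stanley--Zanello bijection (under which the first-column hook lengths of $\kappa_{s-1,s+1}$ are exactly the elements of $P_{s-1,s+1}$), shows that the minimal $s$-abacus of $\kappa_{s-1,s+1}$ carries a bead at position $n$ precisely when $n\in P_{s-1,s+1}$. By Lemma \ref{syl} the largest such $n$ is $(s-1)(s+1)-(s-1)-(s+1)=s(s-2)-1$, so both this abacus and $\alpha(s)$ occupy exactly the positions $0,1,\dots,s(s-2)-1$, i.e. the $s$ runners and $s-2$ rows of Definition \ref{pi}. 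Hence it suffices to prove that the beads of $\alpha(s)$ sit exactly at the integers in this range that are \emph{not} in the numerical semigroup $\langle s-1,s+1\rangle$.

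First I would translate the abacus coordinates into arithmetic: the position in runner $i$ and row $j$ is $n=i+js$ with $0\le i\le s-1$ and $0\le j\le s-3=2k-3$. Since $s-1$ and $s+1$ are coprime and odd with sum $2s$, any semigroup element is $a(s-1)+b(s+1)=(a+b)s+(b-a)$, so $n\in\langle s-1,s+1\rangle$ if and only if $n=ms+d$ for some $m\ge 0$ with $|d|\le m$ and $d\equiv m \pmod 2$. For $n=i+js$ inside the abacus range, a short check shows that only $m=j,\ d=i$ and $m=j+1,\ d=i-s$ can satisfy $|d|\le m$ (a larger value of $|m-j|$ forces $|d|$ to exceed $m$ within this range). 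The first is admissible exactly when $i\le j$ and $i\equiv j\pmod 2$, the second exactly when $i\ge s-1-j$ and $i\not\equiv j\pmod 2$. Consequently
\[
n=i+js\in P_{s-1,s+1}\iff
\begin{cases}
i>j, & i\equiv j \pmod 2,\\
i+j<s-1, & i\not\equiv j \pmod 2.
\end{cases}
\]

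Next I would match this criterion against Definition \ref{pi} runner by runner. For a runner $i$ with $0\le i\le k-1$ (and, by the symmetry $i\leftrightarrow 2k-1-i$, its mirror), every row $j<i$ gives a bead, because $i+j<2i\le 2k-2<s-1$ in the mixed-parity case and $i>j$ in the equal-parity case; row $j=i$ gives a spacer; and among the rows $j>i$ exactly the rows of parity opposite to $i$ with $i+j<s-1$ are beads, which are $k-1-i$ in number. This is precisely ``$i$ beads below, then an alternating spacer--bead tail halting once $k-1$ beads are reached,'' the pattern prescribed by Definition \ref{pi} (cf. the runners in {\bf Figure 1}). Thus the bead-set of $\alpha(s)$ coincides with $P_{s-1,s+1}$, the bead-set of the minimal $s$-abacus of $\kappa_{s-1,s+1}$; since $\alpha(s)$ is normalized, the two abaci represent the same partition.

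The main obstacle is the bookkeeping in the middle two steps: one must rule out, within the abacus range, every semigroup representation except the two candidates, and then track the parity condition so that the ``opposite-parity, $i+j<s-1$'' rows line up exactly with the alternating tail of Definition \ref{pi} and yield the correct count $k-1$. It is worth noting that the reflection $n\mapsto \big(s(s-2)-1\big)-n$ interchanging beads and spacers --- the runner symmetry $i\leftrightarrow s-1-i$ --- is automatic, since $\langle s-1,s+1\rangle$ is a symmetric numerical semigroup with Frobenius number $s(s-2)-1$; this is the abacus incarnation of the self-conjugacy of $\kappa_{s-1,s+1}$ (Theorem \ref{s2t2}) and foreshadows the ``curious symmetry.''
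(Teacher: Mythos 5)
Your proposal is correct, but it takes a genuinely different route from the paper. The paper never identifies the bead positions of $\alpha(s)$ with $P_{s-1,s+1}$ directly; instead it verifies the two properties that characterize the maximal core and then invokes the uniqueness in Theorem \ref{s2t2}: it computes $|\alpha(s)|$ from the quotient structure (Lemma \ref{tau} together with Lemma \ref{obij} give $|\alpha(s)|=2k\cdot 2\sum_{i=1}^{k-1}t_i=\frac{((2k-1)^2-1)((2k+1)^2-1)}{24}$), and it proves hook-freeness by induction on $k$, using the nesting construction of Lemma \ref{nest} and the adjacency facts of Lemma \ref{rowj} to rule out, case by case, every new $(2k+1)$- or $(2k+3)$-hook created in passing from $\alpha(s)$ to $\alpha(s+2)$. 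You instead lean on the Anderson/Stanley--Zanello fact quoted in Section 2.2 (first-column hook lengths of $\kappa_{s,t}$ are exactly $P_{s,t}$, an identification the paper itself uses when comparing $R$ with the minimal abacus in Section 3.1), reduce the lemma to pure arithmetic in $\langle s-1,s+1\rangle$, and settle it with the closed-form criterion: a bead sits at $(i,j)$ iff $i>j$ when $i\equiv j\pmod 2$, and iff $i+j<s-1$ otherwise; your reduction to the two candidate representations $m=j$ and $m=j+1$ is sound, since any other $m$ forces $|d|\geq s$ and hence $n\geq (s+1)(s-1)$, outside the abacus range. What your approach buys: it is non-inductive, shorter, and the explicit criterion makes the runner symmetry and the Amdeberhan--Leven anti-symmetry visible at a glance, so it would reprove Theorem \ref{AmLev} with no extra work. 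What the paper's approach buys: it stays inside the core/quotient machinery being developed (the same induction apparatus also yields Lemma \ref{tau}), and it needs only the size-uniqueness statement of Theorem \ref{s2t2} rather than the poset-ideal correspondence. Two small points to tidy in your write-up: first, the symmetric-semigroup reflection $n\mapsto s(s-2)-1-n$ you cite is the \emph{combined} reflection $(i,j)\mapsto(s-1-i,s-3-j)$ exchanging beads and spacers, not the pure runner symmetry $i\leftrightarrow s-1-i$ that you use to halve the verification; the runner symmetry you actually need follows in one line from your displayed criterion, because replacing $i$ by $s-1-i$ (which flips parity, as $s$ is even) swaps its two cases. Second, you read Definition \ref{pi} as placing $i$ beads (rows $j<i$) at the bottom of runner $i$ for $0\le i\le k-1$; this matches the paper's Example and figures (the Definition as literally written says rows $0\le j\le i$ and only covers $i\le k-2$), so your matching is against the intended pattern, but the discrepancy is worth flagging explicitly.
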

\begin{proof}
By construction, $\alpha(s)$ is minimal, since the first spacer labels zero. 
We must show:
\begin{enumerate}
\item $|\alpha(s)|=\frac{((2k-1)^2-1)((2k+1)^2-1)}{24}$;
\item $\alpha(s)$ contains no $(2k-1)$-hooks or $(2k+1)$-hooks.
\end{enumerate}
Then by the uniqueness implied by Theorem \ref{s2t2}, $\alpha(s)=\kappa_{s\pm1}$. We use the structure of $\alpha(s)$ and induction on $k$.

\vs
By Theorem \ref{obij} each 1-hook in the $s$-quotient corresponds to a $s$-hook in $\lambda$. Hence, to prove (1), since $\alpha(s)^0=\emptyset$, it is enough to calculate $\sum_i|\lambda_{(i)}|$ and multiply by $s=2k$. This equals $2k\cdot 2\sum^{k-1}_{i=1}t_{i}=(4k)\frac{(k-1)(k)(k+1)}{6}$. In particular $4k\frac{(k)(k^2-1)}{6}=\frac{16k^4-16k^2}{24}=\frac{(4k^2-4k)(4k^2+4k)}{24}$, which, after completing-the-square, equals to $\frac{((2k-1)^2-1)((2k+1)^2-1)}{24}$. We are done.

\vs
To prove (2), we use induction on $k>2$. For the basic case, $s$=4, it holds: $ \alpha(4)$ has no 3-hooks or 5-hooks. [See {\bf Appendix A, Figure 6}.]

\vs
By the inductive hypothesis we know the $2k$-abacus of $\kappa_{2k\pm1}$ contains no $(2k-1)$-hooks or $(2k+1)$-hooks. More specifically, no bead in $\alpha(s)$ has a spacer either $2k+1$ or $2k-1$ positions below it. Apply Lemma \ref{nest} to obtain $\alpha(s+2)$; this adds two additional positions between the beads and spacers arising from $\alpha(s)$. Hence there are no $(2k+1)$-hooks or $(2k+3)$-hooks arising from bead-spacer pairs $(x,y)$ where both $x$ and $y$ are in runners $1<i'<2k-2.$ It remains to examine the beads and spacers introduced by runners $i'=0,2k+1$. 

\vs
If a bead in row $j'$ of runner $i'=0$ were to add a new $(2k+3)$-hook, a spacer would have to appear in row $j'-2$ of the runner $i'=2k+1$. By construction, such positions are occupied by beads, since runners $0$ and $2k+1$ are identical. If a bead in row $j'$ of $i'=0$ were to add a new $(2k+1)$-hook, a spacer would have to appear in row $j'-1$ of runner $i'=1$. But by the Lemma \ref{rowj}(1), this position is always occupied by a bead.

\vs
If a bead in row $j'$ on runner $i'=2k+1$ were to add a new $(2k+3)$-hook, a spacer would appear in row $j'-1$ of runner $i'=2k$. But by Lemma \ref{rowj}(2) this position is always occupied by a bead. If a bead in row $j'$ of runner $i'=2k+1$ were to add a new $(2k+1)$-hook, a spacer would have to appear in the same row in the runner $i'=0$. By construction, the two runners are identical, so a bead in one implies a bead in the other.  

If a spacer in row $j'$ of runner $i'=0$ were to add a new $(2k+3)$-hook, a bead would have to appear in row $j'+1$ of runner $i'=1.$ But by Lemma \ref{rowj}(3), this position is always occupied by a spacer. If a spacer in row $j'$ of $i'=0$ were to add $(2k+1)$-hook, a bead would have to appear in the same row of runner $i'=2k+1.$ By construction, the two runners are identical, so a spacer in one implies a spacer in the other.

If a spacer in row $j'$ of runner $i'=2k+1$ were to add a new $(2k+3)$-hook, a bead would have to appear in row $j'+2$ in runner $i'=0$; by construction, since both runners are identical alternating sequences of spacer-and-beads, such positions are occupied by spacers. If a spacer in row $j'$ of runner $i'=2k+1$ were to add a new $(2k+1)$-hook, a bead would have to appear in row $j'+1$ of runner $i'=2k.$ But by Lemma \ref{rowj}(4) this position is occupied by a spacer.

\end{proof}
\subsection{An alternative proof of Amdeberhan-Leven}
Using the results of this section we offer an alternative proof to Theorem \ref{AmLev}.
\begin{proof}[Proof of Theorem \ref{AmLev}] By Lemma \ref{kappaispi}, the $s$-core of $\kappa_{s\pm1}=\emptyset,$ and $X$ is normalized. Again by Lemma \ref{kappaispi}, each $\lambda_{i}$ is self-conjugate, so each runner obeys Lemma \ref{selfhalfint}. By Lemma \ref{selfquohalfint}, all $(\kappa_{s-1,s+1})_i$ have the same axis of symmetry, which is at $i$-position $\frac{s-3}{2}$.  Our runner-row index is $0\leq j\leq s-3$ with $s=r-1,$ which finishes the proof.
\end{proof}
\section{Generalizations}
\subsection{Additional symmetry}
Using Theorem \ref{piquo} we can strengthen Amdeberhan-Leven to include additional symmetry. 
\begin{theorem} Let $s=2k>2$ and let $\alpha(s)$ be the $s$-abacus of $\kappa_{s\pm1}.$ Then the following are equivalent:  
\begin{enumerate}
\item $(i,j)\in \alpha(s)$
\item $(i,s-3-j)\not\in \alpha(s)$
\item $(s-1-i, j)\in \alpha(s).$
\end{enumerate}
\end{theorem}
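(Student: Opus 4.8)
The plan is to reduce the three-way equivalence to two independent symmetries and then invoke transitivity. The equivalence $(1)\Leftrightarrow(2)$ is the anti-symmetry of beads and spacers \emph{within} a single runner (the runner/column symmetry, i.e.\ the Amdeberhan--Leven phenomenon reproved in Section 3.2), whereas $(1)\Leftrightarrow(3)$ is the identification of runner $i$ with runner $s-1-i$ (the row symmetry). Since logical equivalence is transitive, once I have $(1)\Leftrightarrow(2)$ and $(1)\Leftrightarrow(3)$ the remaining implication $(2)\Leftrightarrow(3)$ is automatic and the theorem follows. Throughout I would lean on the structure already established: by Lemma~\ref{kappaispi} and Theorem~\ref{piquo}, $\alpha(s)$ is the minimal, normalized $s$-abacus of $\kappa_{s\pm1}$, its $s$-core is empty, and each runner carries exactly $m=k-1$ beads among the $s-2=2k-2$ rows indexed $0\le j\le s-3$.

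For $(1)\Leftrightarrow(3)$ I would argue essentially from the construction. Definition~\ref{pi} builds the runners $i$ and $2k-i-1=s-1-i$ by the \emph{same} prescription, so they hold beads in exactly the same rows. Equivalently, Theorem~\ref{piquo}(2) gives $(\kappa_{s\pm1})_{(i)}=(\kappa_{s\pm1})_{(s-1-i)}$ as partitions, and since the two runners contain the same number $k-1$ of beads, and the bead-set attached to a fixed partition with a prescribed bead-count is unique, the two runners must agree position-by-position. Hence $(i,j)\in\alpha(s)$ if and only if $(s-1-i,j)\in\alpha(s)$.

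For $(1)\Leftrightarrow(2)$ I would invoke the self-conjugacy of the quotient parts. By Theorem~\ref{piquo}(2) each $(\kappa_{s\pm1})_{(i)}$ is a staircase $\tau_{k-i-1}$, and every staircase is self-conjugate (one checks $\tau_\ell^*=\tau_\ell$ directly), so each $\lambda_{(i)}$ is self-conjugate. Because $X$ is normalized with $m=k-1$ beads in every runner, Lemma~\ref{selfquohalfint} locates the common axis of each quotient part at $\theta(\lambda_{(i)})=m-\frac{1}{2}=\frac{s-3}{2}$. Reflection of the row coordinate about this axis sends $j$ to $2\theta-j=s-3-j$, and by Corollary~\ref{selfhalfint} self-conjugacy means precisely that this reflection interchanges beads and spacers. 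Therefore a bead sits at $(i,j)$ exactly when a spacer sits at $(i,s-3-j)$, which is $(1)\Leftrightarrow(2)$.

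I expect no genuinely hard step, since the statement is an assembly of results already proved and the work is largely bookkeeping. The one point that must be handled carefully is pinning the axis at $\frac{s-3}{2}$: this rests on the bead-count per runner being \emph{exactly} $k-1$, so that $m-\frac{1}{2}$ lands on the correct half-integer, and on the row indexing $0\le j\le s-3$ matching the reflection $j\mapsto s-3-j$ (in particular $j=0$ and $j=s-3$ swap, staying within range). Once these indices are aligned, the two symmetries combine by transitivity to give the full equivalence of $(1)$, $(2)$, and $(3)$.
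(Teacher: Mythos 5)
Your proof is correct and takes essentially the same route as the paper: the paper also splits the statement into $(1)\Leftrightarrow(2)$, which it obtains by citing Theorem \ref{AmLev} (whose abacus proof in Section 3.2 is precisely your axis-of-symmetry argument via Lemma \ref{selfquohalfint} and Corollary \ref{selfhalfint}), and $(1)\Leftrightarrow(3)$, which it deduces from Lemma \ref{kappaispi} and Lemma \ref{tau} just as you do from Definition \ref{pi} and the equality of quotients. Your write-up simply inlines the details the paper leaves as citations, and usefully makes explicit the point (glossed over in the paper) that equal bead counts per runner are needed to pass from equality of quotient partitions to position-by-position identity of runners.
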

\begin{proof} By Theorem \ref{AmLev} is sufficient to prove $(1)\iff (3).$ This follows from Lemma \ref{kappaispi} and Lemma \ref{tau}.
\end{proof}
\subsection{Horizontal anti-symmetry and vertical symmetry}
The symmetries exhibited by the $s$-abacus of $\kappa_{s\pm1}$ can be formalized and generalized to a larger family of partitions. For the remainder of this section we assume that the bead-set $X$ of $\lambda$ is normalized with respect to $s$. Suppose that the $s$-abacus of $\lambda$ has maximum value $i+(q-1)s$. In particular, the $s$-abacus of $\lambda$ has $s$ columns and $q$ rows. 
\begin{definition} \label{hori}
We say the $s$-abacus of $\lambda $ exhibits {\it horizontal anti-symmetry} if a there is a bead in the $(i,j)$th-position if and only if there is a spacer in the $(i,q-j-1)$ position. 
\end{definition}
\begin{definition}\label{vert}
We say the $s$-abacus of $\lambda$ exhibits {\it vertical symmetry} if there is a bead in the $(i,j)$th-position if and only if there is a bead in the $(s-i-1,j)$th-position. 
\end{definition}
\begin{lemma} \label{horiz} The $s$-abacus of $\lambda$ exhibits horizontal anti-symmetry if and only if $q$ is even, $\lambda_{(i)}=\lambda^*_{(i)},$ and each runner has $q$ beads.
\end{lemma}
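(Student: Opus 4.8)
The plan is to prove both implications by translating \emph{horizontal anti-symmetry} into a statement about each individual runner and then invoking the axis-of-symmetry machinery of Proposition \ref{halfint}, Corollary \ref{selfhalfint}, and Lemma \ref{selfquohalfint}. The key observation is that Definition \ref{hori} says exactly that, within every runner $i$, the bead-spacer sequence in rows $0,1,\dots,q-1$ is carried to its complement by the reflection $j\mapsto q-1-j$ about the central position $\frac{q-1}{2}$. Since the bead positions of runner $i$ form a bead-set for $\lambda_{(i)}$, this reflection condition is precisely the hypothesis of Corollary \ref{selfhalfint} with axis $\theta=\frac{q-1}{2}$, provided that value is a genuine half-integer. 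So the whole argument amounts to controlling (a) the parity of $q$ and (b) the location of the reflection axis of each runner.

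For the forward direction I would argue in three steps. First, if $q$ were odd, the reflection $j\mapsto q-1-j$ would fix the central row $j_0=\frac{q-1}{2}$, forcing the position $(i,j_0)$ to be simultaneously a bead and a spacer; hence $q$ is even. Second, with $q$ even the reflection is a fixed-point-free involution pairing the $q$ rows into $q/2$ pairs, and anti-symmetry places exactly one bead in each pair, so every runner contains exactly $q/2$ beads (matching, for instance, the three beads per runner of $\alpha(8)$, where $q=6$). Third, for $q$ even the value $\frac{q-1}{2}$ is a half-integer, so Corollary \ref{selfhalfint} applies directly to the bead-set of runner $i$ and yields $\lambda_{(i)}=\lambda_{(i)}^{*}$.

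For the converse I would run the equivalence the other way. Assuming $q$ even and each runner carrying $q/2$ beads, set $m=q/2$; then $|X|=ms$ since there are $s$ runners each with $m$ beads, and, because all runners share the same bead count, pushing beads down produces identical runners, so $\lambda^0=\emptyset$. Lemma \ref{selfquohalfint} then forces the common axis $\theta(\lambda_{(i)})=m-\frac{1}{2}=\frac{q-1}{2}$ for every $i$. Combining this common axis with the self-conjugacy hypothesis $\lambda_{(i)}=\lambda_{(i)}^{*}$, Corollary \ref{selfhalfint} says the bead-set of each runner is sent to its complement by reflection about $\frac{q-1}{2}$; that is, a bead in position $(i,j)$ corresponds to a spacer in position $(i,q-1-j)$, which is exactly horizontal anti-symmetry.

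The main obstacle is pinning down that the reflection axis of each runner equals the \emph{same} value $\frac{q-1}{2}$ dictated by the global number of rows, rather than some runner-dependent half-integer merely guaranteed by self-conjugacy; this is where Lemma \ref{selfquohalfint}, fed the common bead count $m=q/2$, does the essential work, and it is also the point at which the parity of $q$ enters, since $\frac{q-1}{2}$ is a half-integer exactly when $q$ is even. A minor bookkeeping point to check is that reflecting positions $0\le j\le q-1$ about $\frac{q-1}{2}$ keeps them inside the $q$-row window, which holds because $X$ is normalized with maximum value in row $q-1$.
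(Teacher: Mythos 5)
Your proposal is correct and follows essentially the same route as the paper: the parity argument forcing $q$ even, the identification of the common axis $\theta(\lambda_{(i)})=\frac{q-1}{2}$, and the use of Corollary \ref{selfhalfint} and Lemma \ref{selfquohalfint}; note too that your reading of the bead count as $q/2$ per runner is the right one (the paper's statement says ``$q$ beads,'' but its own proof sets $q=2m$ and Lemma \ref{selfquohalfint} yields $m=q/2$ beads per runner, so the statement's $q$ is a typo). The only substantive difference is that you write out the converse in full --- using the equal bead counts and $|X|=ms$ to get $\lambda^0=\emptyset$ before invoking Lemma \ref{selfquohalfint} to pin the common axis at $\frac{q-1}{2}$ --- whereas the paper dismisses that direction as ``clear.''
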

\begin{proof} Suppose the $s$-abacus of $\lambda$ exhibits horizontal anti-symmetry. Clearly $q$ must be even, otherwise there would exist a bead or spacer in a row that would not have a spacer or bead to pair with. Let $q=2m.$ Horizontal symmetry also implies each runner $i$ must have the same axis of symmetry, $\theta(\lambda_i)=\frac{q-1}{2}=m-\frac{1}{2}$. Lemma \ref{selfhalfint} implies $\lambda=\lambda^*$. By Lemma \ref{selfquohalfint}, each runner has exactly $q$ beads.  The proof in the other direction is clear.
\end{proof}
\begin{lemma} \label{verti} The $s$-abacus of $\lambda $ exhibits vertical symmetry if and only if $s$ is even, runner $i$ and runner $s-i-1$ have the same number of beads, and $\lambda_i=\lambda_{s-i-1}$ for $0\leq i \leq s-1.$
\end{lemma}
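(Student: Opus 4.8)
The plan is to treat this as the between-runners (vertical) counterpart of Lemma \ref{horiz}, reducing the pointwise condition in Definition \ref{vert} to the two runner-level invariants appearing in the statement: equal bead counts, and equal quotient parts $\lambda_{(i)}=\lambda_{(s-i-1)}$. The central observation is that, unlike horizontal anti-symmetry, vertical symmetry compares two \emph{distinct} runners position-by-position, so it is equivalent to the bead-sets $X_i=\{j:i+js\in X\}$ and $X_{s-i-1}$ being equal as subsets of $\{0,1,\dots,q-1\}$.

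First I would dispatch the forward direction, which is immediate. Assuming vertical symmetry, Definition \ref{vert} says precisely that for every $j$ there is a bead at $(i,j)$ if and only if there is a bead at $(s-i-1,j)$, i.e. $X_i=X_{s-i-1}$. Equal sets have equal cardinality, giving that runners $i$ and $s-i-1$ carry the same number of beads; and since $\lambda_{(i)}$ is by definition the partition read off from the bead-set $X_i$ via Lemma \ref{onebead}, equal bead-sets yield $\lambda_{(i)}=\lambda_{(s-i-1)}$.

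The reverse direction carries the real content, and is where I expect the main obstacle. From the hypotheses I know only the partition $\lambda_{(i)}$ and the number of beads $b_i=|X_i|$ in each runner, yet I must recover the entire row pattern $X_i$. The point I would isolate and prove as a short sub-claim is that \emph{a partition together with a prescribed number of beads determines its bead-set uniquely}: if a runner holds $b$ beads and represents $\mu=(\mu_1\ge\mu_2\ge\cdots)$, then by Lemma \ref{onebead} the beads occupy exactly the positions $\mu_\ell+(b-\ell)$ for $1\le\ell\le b$ (with $\mu_\ell=0$ once $\ell$ exceeds the number of parts). Hence $X_i$ is a function of the pair $(\lambda_{(i)},b_i)$ alone, so $\lambda_{(i)}=\lambda_{(s-i-1)}$ together with $b_i=b_{s-i-1}$ forces $X_i=X_{s-i-1}$, which is exactly vertical symmetry for that pair; ranging over $0\le i\le s-1$ finishes the equivalence. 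It is worth stressing \emph{why} the bead-count hypothesis cannot be dropped: the partition alone does not determine $X_i$, since inserting low beads alters $X_i$ while fixing $\lambda_{(i)}$. This is the genuine subtlety here, in contrast to the horizontal case, where the axis-of-symmetry argument of Lemma \ref{horiz} recovers the bead count automatically through Lemma \ref{selfquohalfint}.

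Finally, regarding the parity clause, I would be careful. Unlike the ``$q$ even'' condition of Lemma \ref{horiz}, which is genuinely forced because a central row would have to pair a bead with a spacer, an odd $s$ simply leaves the central runner $i=(s-1)/2$ self-paired, so vertical symmetry imposes no constraint there and does not by itself force $s$ even. I would therefore read ``$s$ even'' as the paper's standing hypothesis $s=2k$ rather than as a consequence of Definition \ref{vert}, and state the equivalence of the remaining two conditions with vertical symmetry under that standing assumption.
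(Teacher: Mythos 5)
Your proposal is correct and follows the same basic route as the paper --- identifying vertical symmetry with equality of the runner bead-sets $X_i = X_{s-i-1}$ --- but it is more careful than the paper's own proof at exactly the two places where care is needed. For the reverse direction the paper says only that ``the proof in the other direction is clear''; your sub-claim that a runner's bead pattern is recovered from the pair $(\lambda_{(i)}, b_i)$ via the positions $\mu_\ell + (b-\ell)$, $1\le \ell\le b$, is precisely the missing justification, and your remark that the bead-count hypothesis cannot be dropped (prepending low beads changes $X_i$ while fixing $\lambda_{(i)}$) correctly explains why both hypotheses are needed. More significantly, your reservation about the parity clause is well taken: the paper argues that vertical symmetry forces $s$ even because otherwise there would be a bead or spacer in a runner with nothing to pair with, but this transplants the horizontal argument to a setting where it fails --- in the vertical case the middle runner $i=(s-1)/2$ of an odd abacus is paired with itself, and a bead pairing with a bead creates no contradiction. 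Concretely, the normalized $3$-abacus of the empty partition, with beads at $0,1,2$, is vertically symmetric with $s$ odd, so the forward implication of the lemma is false as literally stated. Your decision to read ``$s$ even'' as the standing hypothesis $s=2k$ rather than as a consequence of Definition \ref{vert} is the correct repair, and it is a point the paper's proof gets wrong.
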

\begin{proof} Suppose the $s$-abacus of $\lambda$ exhibits horizontal symmetry. Then $s$ must be even, otherwise there would a bead or spacer in a runner that would not have a bead or spacer to pair with. Vertical symmetry also implies that each runner $i$ and $s-i-1$ must be identical. This means runners $i$ and $s-i-1$ have the same number of beads and $\lambda_i=\lambda_{s-i-1}$ for each $0\leq i\leq s-1$. The proof in the other direction is clear.
\end{proof}
\begin{theorem} \label{strongs} $\lambda $ exhibits both horizontal anti-symmetry and vertical symmetry with respect to $s$ if and only if $s$ and $q$ are both even and the following three conditions hold for all $0\leq i\leq s-1$
\begin{enumerate}
\item $\lambda^0=\emptyset$  
\item $\lambda_{(i)}=\lambda^*_{(i)}$
\item $\lambda_{(i)}=\lambda_{(s-i-1)}.$
\end{enumerate}
\end{theorem}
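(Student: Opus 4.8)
The plan is to read off the theorem as the logical conjunction of the two symmetry characterizations already in hand: Lemma \ref{horiz} for horizontal anti-symmetry and Lemma \ref{verti} for vertical symmetry. The only genuine work is to see that the two bead-counting hypotheses occurring there collapse, in the presence of normalization, to the single $s$-core statement (1) by means of Lemma \ref{selfquohalfint}. Throughout I keep the standing assumption that $X$ is normalized with respect to $s$, and I write $q$ for the number of rows and $m = |X|/s$.

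For the forward implication, assume the $s$-abacus of $\lambda$ enjoys both symmetries. Lemma \ref{horiz} gives that $q$ is even, that each $\lambda_{(i)}$ is self-conjugate (this is (2)), and that every runner carries the same number of beads; its proof in fact places every runner axis at the common half-integer $m - \tfrac12$. Feeding this common-axis conclusion into Lemma \ref{selfquohalfint} yields $|X| = ms$ and $\lambda^0 = \emptyset$, which is (1). Separately, Lemma \ref{verti} gives that $s$ is even and that $\lambda_{(i)} = \lambda_{(s-i-1)}$, which is (3). Hence $s$ and $q$ are even and (1)--(3) hold.

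For the reverse implication, assume $s$ and $q$ even and (1)--(3). Condition (1), through the converse direction of Lemma \ref{selfquohalfint}, forces every runner to have the same bead count $m$ and common axis $m - \tfrac12$; together with the per-runner self-conjugacy (2) this is exactly the hypothesis package of Lemma \ref{horiz}, once I confirm that this common count equals $q/2$. Condition (3) makes runners $i$ and $s-i-1$ identical partitions, so they share both a bead count and a bead pattern; with $s$ even this is precisely the hypothesis of Lemma \ref{verti}. Invoking the two lemmas in their ``if'' direction then returns horizontal anti-symmetry and vertical symmetry.

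The step I expect to fight with is the bead-count/row-count reconciliation $m = q/2$ in the reverse direction, i.e.\ ruling out extra all-spacer rows at the top of the abacus. The resolution should come from combining minimality of $X$ (so that row $0$ of runner $0$ is a spacer) with the self-conjugacy in (2): reflecting that bottom spacer across the axis $m - \tfrac12$ produces a bead in row $2m-1$, while (2) simultaneously caps every bead position at $2m-1$, pinning the top occupied row and forcing $q = 2m$. Once $q = 2m$ is secured the two characterizations compose with no further computation, so the whole argument is essentially bookkeeping around this single identity.
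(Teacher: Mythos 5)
Your proposal is correct and takes essentially the same route as the paper, whose entire proof of Theorem \ref{strongs} is the single line ``This follows from Lemma \ref{horiz} and Lemma \ref{verti}''; your write-up simply supplies the details that line suppresses, namely the passage between ``each runner has the same number of beads'' and $\lambda^0=\emptyset$ via Lemma \ref{selfquohalfint}, and the pinning of the row count $q=2m$. One small correction: the standing hypothesis in this section is that $X$ is normalized, not minimal, so row $0$ of runner $0$ may be a bead; the spacer you reflect across the axis to pin the top row sits in row $0$ of runner $k$ (where $k\le s-1$ is the number of prepended beads guaranteed by normalization), and your argument is otherwise unchanged.
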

\begin{proof} This follows from Lemma \ref{horiz} and Lemma \ref{verti}. 
\end{proof}
\begin{example} $\lambda=(8,6^4,1^2)$ exhibits horizontal anti-symmetry and vertical symmetry with respect to $s=4$, but is neither a 3-core nor a 5-core. See {\bf Figure 5}.
\end{example}
The following corollary is immediate.
\begin{corollary} Let $s=2k>1.$ The $s$-abacus of $\kappa_{s\pm1}$ exhibits horizontal anti-symmetry and vertical symmetry.
\end{corollary}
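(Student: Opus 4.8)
The plan is to check that $\kappa_{s\pm1}$ satisfies every hypothesis of Theorem \ref{strongs}; once all of them are in place the corollary follows at once, since Theorem \ref{strongs} asserts exactly the desired horizontal anti-symmetry and vertical symmetry precisely under those hypotheses. Thus I would reduce the problem to verifying that $s$ and $q$ are both even and that the three quotient conditions $\lambda^0=\emptyset$, $\lambda_{(i)}=\lambda_{(i)}^*$, and $\lambda_{(i)}=\lambda_{(s-i-1)}$ hold, with $\lambda=\kappa_{s\pm1}$ and $q$ the number of rows of its normalized $s$-abacus.

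First I would dispose of the two parity conditions. By hypothesis $s=2k$ is even. For $q$, I would invoke Lemma \ref{kappaispi}, which identifies $\alpha(s)$ as the minimal (normalized) $s$-abacus of $\kappa_{s\pm1}$; by Definition \ref{pi} this abacus has exactly $s-2=2k-2$ rows, so $q=2k-2$ is even. Conditions (1) and (3) of Theorem \ref{strongs} are then nothing but Theorem \ref{piquo}: part (1) gives $(\kappa_{s\pm1})^0=\emptyset$, and part (2) gives $(\kappa_{s\pm1})_{(i)}=(\kappa_{s\pm1})_{(s-i-1)}$ for each $i$.

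The only genuine content is condition (2), the self-conjugacy of each quotient component. Here I would again appeal to Theorem \ref{piquo}(2), which identifies $(\kappa_{s\pm1})_{(i)}$ with a staircase partition $\tau_{k-i-1}$; by Lemma \ref{2core} these staircases are exactly the $2$-cores, and the staircase shape $\tau_\ell=(\ell,\ell-1,\dots,1)$ is manifestly its own conjugate. Hence $(\kappa_{s\pm1})_{(i)}=\bigl((\kappa_{s\pm1})_{(i)}\bigr)^*$, establishing condition (2). I do not expect a real obstacle: the corollary is a bookkeeping check against Theorem \ref{strongs}, with all structural input already assembled in Theorem \ref{piquo} and Lemma \ref{kappaispi}. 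The only points requiring a moment's care are confirming that the row count $s-2$ is even and observing explicitly that a staircase is self-conjugate, which is what makes condition (2) hold.
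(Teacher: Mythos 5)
Your proof is correct and follows exactly the route the paper intends: the paper states this corollary as ``immediate'' from Theorem \ref{strongs}, with the hypotheses supplied by Theorem \ref{piquo} and Lemma \ref{kappaispi}, which is precisely the bookkeeping you carry out (including the two points the paper leaves silent: that $q=s-2$ is even and that staircases are self-conjugate). No gaps; your write-up is just a fleshed-out version of the paper's one-line justification.
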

\begin{corollary} If the $s$-abacus of $\lambda$ exhibits both horizontal anti-symmetry and vertical symmetry then $\lambda$ is self-conjugate.
\end{corollary}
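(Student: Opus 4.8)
The plan is to prove self-conjugacy purely at the level of $s$-core and $s$-quotient data, so that no Young diagrams need to be manipulated directly. First I would feed the hypotheses into Theorem \ref{strongs}: because the $s$-abacus of $\lambda$ is assumed to exhibit \emph{both} horizontal anti-symmetry and vertical symmetry, that theorem delivers immediately that $s$ and $q$ are even together with the three structural facts, valid for all $0\leq i\leq s-1$: (i) $\lambda^0=\emptyset$; (ii) each quotient component is self-conjugate, $\lambda_{(i)}=\lambda_{(i)}^*$; and (iii) the quotient is palindromic, $\lambda_{(i)}=\lambda_{(s-i-1)}$. Thus the entire burden of the corollary is reduced to recognizing that (i)--(iii) force $\lambda=\lambda^*$.

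The second step is to invoke the self-conjugacy test packaged in Lemma \ref{sym_quo}. The content I would use is that conjugation carries the $s$-quotient of $\lambda$ to the $s$-quotient of $\lambda^*$ by \emph{reversing} the runner order and conjugating each component, so that $(\lambda^*)_{(i)}=(\lambda_{(s-1-i)})^*$. Combined with the fact that two partitions coincide exactly when they share an $s$-core and an $s$-quotient, this says $\lambda$ is self-conjugate precisely when $\lambda^0$ is self-conjugate and $\lambda_{(i)}=(\lambda_{(s-1-i)})^*$ for every $i$. Since the empty $s$-core from (i) is trivially self-conjugate, only the quotient identity remains to be checked.

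The final step is the short calculation linking (ii) and (iii). Starting from the target expression $(\lambda_{(s-1-i)})^*$, I would first apply the palindromy (iii) to rewrite $\lambda_{(s-1-i)}$ as $\lambda_{(i)}$, and then apply the componentwise self-conjugacy (ii) to replace $(\lambda_{(i)})^*$ by $\lambda_{(i)}$. This yields $(\lambda_{(s-1-i)})^*=(\lambda_{(i)})^*=\lambda_{(i)}$, which is exactly the required agreement $(\lambda^*)_{(i)}=\lambda_{(i)}$. With the $s$-core and $s$-quotient of $\lambda$ and $\lambda^*$ now matching, Lemma \ref{sym_quo} gives $\lambda=\lambda^*$.

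The only point demanding care, and the nearest thing to an obstacle, is the interaction of the two operations that conjugation induces on the $s$-quotient: the index reversal $i\mapsto s-1-i$ and the componentwise conjugation. The key observation is that vertical symmetry (via the palindromy (iii)) is precisely what absorbs the index reversal, while horizontal anti-symmetry (via the componentwise self-conjugacy (ii)) is precisely what absorbs the componentwise conjugation; it is the simultaneous presence of both symmetries, and not either one in isolation, that makes the two cancellations occur together and forces $\lambda=\lambda^*$. No deeper difficulty should arise, since the substantive work, namely the abacus characterizations of the two symmetries, has already been carried out in Lemma \ref{horiz}, Lemma \ref{verti}, and Theorem \ref{strongs}.
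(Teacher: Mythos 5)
Your proposal is correct and follows essentially the same route as the paper: apply Theorem \ref{strongs} to extract $\lambda^0=\emptyset$, $\lambda_{(i)}=\lambda_{(i)}^*$, and $\lambda_{(i)}=\lambda_{(s-i-1)}$, combine the last two identities to match the quotient of $\lambda$ with that of $\lambda^*$, and conclude via Lemma \ref{sym_quo}. The only cosmetic difference is that the paper explicitly re-invokes the standing normalization hypothesis $|X|\equiv 0 \pmod{s}$ required by Lemma \ref{sym_quo}, which you use implicitly.
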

\begin{proof} By Theorem \ref{strongs}, since $\lambda_{(i)}=\lambda_{(s-i-1)}$ and $\lambda_{(i)}=\lambda^*_{(i)},$ we have $\lambda_{(i)}=\lambda^*_{(s-i-1)}.$ Since $\lambda^0=\emptyset,$ and by assumption $|X|=0\pmod{s},$ we have $\lambda=\lambda^*$ by Lemma \ref{sym_quo}.
\end{proof}
\begin{figure}
\caption{The minimal $4$-abacus of $\lambda=(8,6^4,1^2)$ (see Example 4.7) }
\hspace{-1cm}
\begin{center}
\scalebox{0.3}{\begin{tikzpicture}
\vs

\draw (4cm,-2cm) node [circle, minimum size=1.5cm, inner sep=0pt, draw, anchor=south west] {\Huge $12$};
\draw (6cm,-2cm) node [circle, minimum size=1.5cm, inner sep=0pt, draw=none, anchor=south west] {\Huge $13$};
\draw (8cm,-2cm) node [circle, minimum size=1.5cm, inner sep=0pt, draw=none, anchor=south west] {\Huge $14$};
\draw (10cm,-2cm) node [circle, minimum size=1.5cm, inner sep=0pt, draw, anchor=south west] {\Huge $15$};
\draw (4cm,-4cm) node [circle, minimum size=1.5cm, inner sep=0pt, draw, anchor=south west] {\Huge $8$};
\draw (6cm,-4cm) node [circle, minimum size=1.5cm, inner sep=0pt, draw, anchor=south west] {\Huge $9$};
\draw (8cm,-4cm) node [circle, minimum size=1.5cm, inner sep=0pt, draw, anchor=south west] {\Huge $10$};
\draw (10cm,-4cm) node [circle, minimum size=1.5cm, inner sep=0pt, draw, anchor=south west] {\Huge $11$};
\draw (4cm,-6cm) node [circle, minimum size=1.5cm, inner sep=0pt, draw=none, anchor=south west] {\Huge $4$};
\draw (6cm,-6cm) node [circle, minimum size=1.5cm, inner sep=0pt, draw=none, anchor=south west] {\Huge $5$};
\draw (8cm,-6cm) node [circle, minimum size=1.5cm, inner sep=0pt, draw=none, anchor=south west] {\Huge $6$};
\draw (10cm,-6cm) node [circle, minimum size=1.5cm, inner sep=0pt, draw=none, anchor=south west]{\Huge $7$};
\draw (4cm,-8cm) node [circle, minimum size=1.5cm, inner sep=0pt, draw=none, anchor=south west] {\Huge $0$};
\draw (6cm,-8cm) node [circle, minimum size=1.5cm, inner sep=0pt, draw, anchor=south west] {\Huge $1$};
\draw (8cm,-8cm) node [circle, minimum size=1.5cm, inner sep=0pt, draw, anchor=south west] {\Huge $2$};
\draw (10cm,-8cm) node [circle, minimum size=1.5cm, inner sep=0pt, draw=none, anchor=south west]{\Huge $3$};
\end{tikzpicture}}
\end{center}
\end{figure}
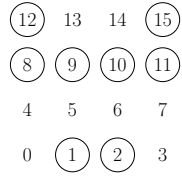
\section{Further Study}
\subsection{Simultaneous ${\bf (s-1,s,s+1)}$-cores}
The following theorem is a recently-proven conjecture of Amdeberhan \cite{A}.
\begin{theorem} (Yang-Zhong-Zhou, \cite{Y-Z-Z}; H. Xiong, \cite{X}) \label{extra-s} The size of the largest $(s-1,s,s+1)-$core is
\begin{enumerate}
\item $k\binom{k+1}{3}$ if $s=2k>2$
\item $(k+1)\binom{k+1}{3}$+$\binom{k+2}{3}$ if $s=2k+1>2.$
\end{enumerate}
\end{theorem}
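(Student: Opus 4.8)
The plan is to realize the largest $(s-1,s,s+1)$-core through the extension, due to Amdeberhan and Leven \cite{A-L}, of the Stanley--Zanello correspondence \cite{S-Z} to three generators: lower ideals of the poset $P_{s-1,s,s+1}$ correspond to simultaneous $(s-1,s,s+1)$-cores, the first-column hook lengths of the core being exactly the elements of the ideal. Here $P_{s-1,s,s+1}$ is the set of positive integers omitted by the numerical semigroup $S=\langle s-1,s,s+1\rangle$, ordered so that $z_1$ covers $z_2$ when $z_1-z_2\in\{s-1,s,s+1\}$. First I would show that the entire poset $P_{s-1,s,s+1}$ (the full gap set of $S$) is itself a lower ideal, hence corresponds to an honest core. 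The key observation is a closure property: if $z$ is a gap and $g\in\{s-1,s,s+1\}$ with $z>g$, then $z-g$ is again a gap, since otherwise $z=(z-g)+g\in S$. Consequently the gap set is downward closed, it is the maximal lower ideal, and it corresponds to the largest $(s-1,s,s+1)$-core $\mu$ whose first-column hook set $H$ is precisely the set of gaps of $S$. (That every $(s-1,s,s+1)$-core has $H\subseteq S^{c}$ follows by the same argument run backwards, so no larger core is possible.)

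Next I would make the gap set explicit. Writing any nonzero element of $S$ as a sum of $m\ge 1$ generators shows $S\setminus\{0\}=\bigcup_{m\ge 1}[m(s-1),\,m(s+1)]$, since $m$ generators realize every integer in $[m(s-1),m(s+1)]$. The gaps are therefore the integers lying strictly between consecutive blocks, namely the intervals $(m(s+1),(m+1)(s-1))$ for $m\ge 0$ (with the $m=0$ block taken to be $\{0\}$); such an interval is nonempty exactly when $2m\le s-3$. Since $\langle s-1,s,s+1\rangle\supseteq\langle s-1,s+1\rangle$, inserting the middle generator $s$ only deletes gaps from the two-generator picture governed by Lemma \ref{syl}, and what survives are precisely these blocks. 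The parity of $s$ now enters: for $s=2k$ the index runs $0\le m\le k-2$ and the $m$-th block of gaps has $2(k-m-1)$ elements, while for $s=2k+1$ it runs $0\le m\le k-1$ with $2(k-m)-1$ elements.

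Finally I would convert first-column hook data into size. If the gaps, listed decreasingly, are $g_1>g_2>\cdots>g_\ell$, then $\mu=(g_1-(\ell-1),\,g_2-(\ell-2),\,\dots,\,g_\ell)$, so by Lemma \ref{onebead} (equivalently, by reading components off the bead-set) one has
\[
|\mu|=\sum_{j=1}^{\ell} g_j-\binom{\ell}{2}.
\]
It then remains to evaluate $\sum_j g_j$ and $\ell$ from the block description, in each parity class, and to simplify. For $s=2k$ this gives $\ell=k(k-1)$ and, after collecting the per-block arithmetic sums $\sum_{m=0}^{k-2}(k-m-1)(4km+2k-1)$, the total $k\binom{k+1}{3}$; for $s=2k+1$ one gets $\ell=k^{2}$ and, after the odd analogue, the two-term expression $(k+1)\binom{k+1}{3}+\binom{k+2}{3}$.

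I expect the main obstacle to be precisely this last bookkeeping: each block contributes an arithmetic-progression sum whose length and average both depend on $m$, and assembling these into a single (or double) binomial expression requires careful use of the triangular- and tetrahedral-number identities, with parity-dependent endpoints inviting off-by-one errors. A secondary technical point is justifying that the full-poset ideal is the size-maximal one under the three-generator correspondence of \cite{A-L}; I would settle this by the containment argument sketched above rather than by monotonicity of size along the ideal lattice, which is not monotone in general. I would guard the computation by checking the small cases $s=3,4,5,6$, where the formulas predict sizes $1,2,7,12$, against the directly computed gap sets $\{1\}$, $\{1,2\}$, $\{1,2,3,7\}$, and $\{1,2,3,4,8,9\}$. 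Finally, I note that the $s$-abacus of $\kappa_{s\pm1}$ developed in Theorem \ref{piquo} is less convenient here, since the additional requirement of being an $s$-core forces a \emph{trivial} $s$-quotient, in contrast to the nonempty staircase quotient of $\kappa_{s\pm1}$; this is why I favor the gap-set route.
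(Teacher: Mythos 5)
First, a point of context: the paper does not prove Theorem \ref{extra-s} at all; it is quoted from Yang--Zhong--Zhou \cite{Y-Z-Z} and Xiong \cite{X}, so your proposal has to stand on its own rather than be measured against an internal argument. Much of it does stand: the Amdeberhan--Leven correspondence \cite{A-L} between $(s-1,s,s+1)$-cores and lower ideals of $P_{s-1,s,s+1}$ is the right tool; the gap set of $\langle s-1,s,s+1\rangle$ really is the union of blocks $(m(s+1),(m+1)(s-1))$; it is downward closed, hence an ideal, hence corresponds to an honest core; your counts $\ell=k(k-1)$ (resp.\ $k^{2}$), the block sums $(k-m-1)(4km+2k-1)$, and the final simplifications to $k\binom{k+1}{3}$ and $(k+1)\binom{k+1}{3}+\binom{k+2}{3}$ all check out (I verified both parities, and your small cases $s=3,4,5,6$ are correct).

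The genuine gap is the maximality step, and it is the heart of the theorem. You assert that since every $(s-1,s,s+1)$-core has hook set $H\subseteq S^{c}$, ``no larger core is possible,'' and later you say you would settle maximality ``by the containment argument'' rather than by monotonicity of size along the ideal lattice, ``which is not monotone in general.'' But the containment argument \emph{is} an appeal to exactly that monotonicity: to pass from $H\subseteq S^{c}$ to $|\lambda_{H}|\le|\lambda_{S^{c}}|$ you need size to be weakly increasing under ideal containment, and it is not. Concretely, by Lemma \ref{onebead} (equivalently your formula $|\mu|=\sum_j g_j-\binom{\ell}{2}$), adjoining a new gap $x$ to an $\ell$-element hook set changes the size by $x-\ell$, which is negative when $x<\ell$. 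In $P_{5,6,7}=\{1,2,3,4,8,9\}$ the lower ideals $\{1,3,4\}\subset\{1,2,3,4\}$ correspond to the $(5,6,7)$-cores $(2,2,1)$ and $(1,1,1,1)$ of sizes $5>4$: a strictly larger ideal gives a strictly smaller core. So your argument establishes only that $S^{c}$ is the maximal \emph{ideal}, not that its partition has maximal \emph{size}. What must actually be proved is, for instance, that every filter $F$ of $P_{s-1,s,s+1}$ with $|F|=f$ has element sum at least $fL-\binom{f+1}{2}$, where $L=|P_{s-1,s,s+1}|$ (this is equivalent to $|\lambda_{P\setminus F}|\le|\lambda_{P}|$); nothing in your sketch yields this, and it is precisely the estimate to which the cited papers devote their effort. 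Until that is supplied, your computation evaluates the size of one particular core rather than the largest one.
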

Let $\kappa_{s-1,s,s+1}$ is a (not necessarily unique) simultaneous $(s-1,s,s+1)$-core of maximal size. Theorem \ref{extra-s} allows us to compare $|\kappa_{s\pm1}|$ with $|\kappa_{s-1,s,s+1}|.$ 
\begin{proposition}\label{extra4} Let $s=2k>2.$ Then $|\kappa_{s\pm1}|>|\kappa_{(s-1,s,s+1)}|.$ In particular, $|\kappa_{s\pm1}|=4|\kappa_{(s-1,s,s+1)}|$
\end{proposition}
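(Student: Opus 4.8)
The plan is to prove the stronger ``in particular'' statement $|\kappa_{s\pm1}| = 4|\kappa_{(s-1,s,s+1)}|$ directly, by writing both sizes in closed form as polynomials in $k$ and comparing them; the strict inequality then drops out for free, since $|\kappa_{(s-1,s,s+1)}|$ is strictly positive for every $s = 2k > 2$, and a positive quantity is always less than four times itself.

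First I would record the size of $\kappa_{s\pm1}$. The pair $(s-1,s+1)$ is coprime because $s$ is even, so Theorem \ref{s2t2} applies; equivalently, I can simply read off the computation already carried out in the proof of Lemma \ref{kappaispi}. Either way,
\[
|\kappa_{s\pm1}| = \frac{((2k-1)^2-1)((2k+1)^2-1)}{24}.
\]
Using $(2k-1)^2 - 1 = 4k(k-1)$ and $(2k+1)^2 - 1 = 4k(k+1)$, this collapses to $|\kappa_{s\pm1}| = \frac{16k^2(k^2-1)}{24} = \frac{2k^2(k^2-1)}{3}$. Next I would invoke the even branch of Theorem \ref{extra-s}: for $s = 2k > 2$ we have $|\kappa_{(s-1,s,s+1)}| = k\binom{k+1}{3}$, and since $\binom{k+1}{3} = \frac{k(k^2-1)}{6}$, this is $|\kappa_{(s-1,s,s+1)}| = \frac{k^2(k^2-1)}{6}$.

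Comparing the two closed forms gives $\frac{2k^2(k^2-1)}{3} = 4 \cdot \frac{k^2(k^2-1)}{6}$, which is exactly $4|\kappa_{(s-1,s,s+1)}|$; this settles the ``in particular'' claim, and the strict inequality follows at once because $\frac{k^2(k^2-1)}{6} > 0$ for all $k \geq 2$. The whole argument is a routine polynomial identity, so there is no substantive obstacle. The only points demanding attention are purely bookkeeping: making sure I select the $s = 2k$ (rather than $s = 2k+1$) branch of Theorem \ref{extra-s}, and confirming the coprimality hypothesis of Theorem \ref{s2t2} is met so that the size formula $\frac{(s^2-1)(t^2-1)}{24}$ is legitimately available for $(s-1,s+1)$. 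I expect no subtler difficulty.
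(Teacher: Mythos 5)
Your proof is correct and takes essentially the same route as the paper: both arguments compute $|\kappa_{s\pm1}|$ from Theorem \ref{s2t2}, compute $|\kappa_{(s-1,s,s+1)}|$ from the even branch of Theorem \ref{extra-s}, and observe that the two closed forms in $k$ differ by exactly a factor of $4$. Your only additions---checking that $(s-1,s+1)$ are coprime and that $|\kappa_{(s-1,s,s+1)}|>0$ so the strict inequality follows---are details the paper leaves implicit.
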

\begin{proof} Since $s$ is even, by Theorem \ref{extra-s}(1) above
$|\kappa_{s-1,s,s+1}|=\frac{k^4-k^2}{6}$. However by Theorem \ref{s2t2}, $|\kappa_{s\pm 1}|=\frac{((s-1)^2-1)((s+1)^2-1)}{24}.$ This simplifies to $\frac{4(k^4-k^2)}{6}.$ The result follows.
\end{proof}
\begin{corollary}\label{nevers}
$\kappa_{s,s+2}$ is never an $s$-core. 
\end{corollary}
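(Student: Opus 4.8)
The plan is to deduce this in one line from the size comparison of Proposition \ref{extra4}, reading the partition in question as $\kappa_{s\pm1}=\kappa_{s-1,s+1}$, the maximal $(s-1,s+1)$-core studied there (with $s=2k>2$). The heart of the matter is purely extremal: layering an $s$-core condition on top of the defining $(s-1,s+1)$-core condition would turn $\kappa_{s\pm1}$ into a simultaneous $(s-1,s,s+1)$-core, and no such partition can be as large as $\kappa_{s\pm1}$ actually is.

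Concretely, I would argue by contradiction. Suppose $\kappa_{s\pm1}$ were an $s$-core. Since by definition it already contains no hook of length $s-1$ and none of length $s+1$, this added assumption would mean it also contains no hook of length $s$, so that $\kappa_{s\pm1}$ would be a simultaneous $(s-1,s,s+1)$-core. By the maximality of $\kappa_{s-1,s,s+1}$ among all $(s-1,s,s+1)$-cores (whose size is recorded in Theorem \ref{extra-s}), this would force $|\kappa_{s\pm1}|\le|\kappa_{s-1,s,s+1}|$.

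On the other hand, Proposition \ref{extra4} gives the exact relation $|\kappa_{s\pm1}|=4|\kappa_{s-1,s,s+1}|$. Since $s=2k>2$ makes $|\kappa_{s-1,s,s+1}|=\tfrac{k^4-k^2}{6}=\tfrac{k^2(k^2-1)}{6}>0$, we obtain $|\kappa_{s\pm1}|=4|\kappa_{s-1,s,s+1}|>|\kappa_{s-1,s,s+1}|$, contradicting the inequality of the previous step. Hence $\kappa_{s\pm1}$ cannot be an $s$-core.

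There is essentially no obstacle in the argument itself, since the entire difficulty has already been discharged into Proposition \ref{extra4}; the only care needed is the bookkeeping that $|\kappa_{s-1,s,s+1}|$ is strictly positive for every $s=2k>2$ (so the factor $4$ yields a genuinely strict inequality) and that the three avoidance conditions really do combine to the $(s-1,s,s+1)$-core property. As a sanity check and an independent route, I would note that the same conclusion drops out of Theorem \ref{piquo}(1): since $(\kappa_{s\pm1})^0=\emptyset$ while $|\kappa_{s\pm1}|>0$, the partition $\kappa_{s\pm1}$ must contain at least one $s$-hook and therefore fails to be an $s$-core.
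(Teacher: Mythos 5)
Your proof is correct and follows the paper's own route: the corollary is deduced from the size comparison in Proposition \ref{extra4} (a partition that were simultaneously an $(s-1,s+1)$-core and an $s$-core would be an $(s-1,s,s+1)$-core, contradicting $|\kappa_{s\pm1}|=4|\kappa_{s-1,s,s+1}|>|\kappa_{s-1,s,s+1}|$), and your ``sanity check'' via Theorem \ref{piquo}(1) is exactly the alternative argument the paper itself notes, namely that $(\kappa_{s\pm1})^0=\emptyset$ forces $\kappa_{s\pm1}$ to consist entirely of $s$-hooks.
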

Corollary \ref{nevers} also follows from Theorem \ref{piquo} which says $\kappa_{s,s+2}$ is comprised completely of $s$-hooks. 
Is there interpretation (either in the geometry of the $s$-abacus or in the manipulation of Young diagrams) of the factor of 4 that appears above? A cursory examination of $\kappa_{(3,5)}$ and $\kappa_{(3,4,5)}$ does not suggest an obvious one.
\subsection{Other proofs using the $s$-abacus}
In their proof of Theorem \ref{AmLev} Amdeberhan and Leven use the following result (Corollary 2.1 (ii), \cite{A-L}). 
\begin{lemma}\label{half} Exactly half of the integers in $\{1,2,\cdots, (s-1)(t-1)\}$ belong to $P_{s,t}.$
\end{lemma}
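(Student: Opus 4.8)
The plan is to read the statement directly off the bead-set (abacus) picture of $\kappa_{s,t}$, exploiting the self-conjugacy supplied by Theorem \ref{s2t2}. First I would recall, from the Stanley--Zanello correspondence reviewed in the introduction, that the first-column hook lengths of $\kappa_{s,t}$ are exactly the elements of $P_{s,t}$; equivalently, the minimal bead-set $X$ of $\kappa_{s,t}$ has its beads precisely at the positions in $P_{s,t}$. Since every element of $P_{s,t}$ is a positive integer, position $0$ is a space, consistent with $X$ being minimal.

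Next I would locate the axis. By Lemma \ref{syl} the largest bead occurs at $x' = st - s - t = (s-1)(t-1) - 1 =: F$, so by the lemma identifying the axis with half the maximal bead position, $\theta(\kappa_{s,t}) = F/2$. Because $\gcd(s,t) = 1$ forces $(s-1)(t-1)$ to be even, $F$ is odd and $\theta$ is a genuine half-integer, so the reflection $p \mapsto F - p$ has no integer fixed point.

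Now I would invoke self-conjugacy. Theorem \ref{s2t2} gives $\kappa_{s,t} = \kappa_{s,t}^*$, so by Corollary \ref{selfhalfint} the reflection about $\theta = F/2$ interchanges beads and spaces. Restricted to the positions $\{0, 1, \ldots, F\}$ this reflection is the fixed-point-free involution $p \mapsto F - p$, which pairs each bead with a space; hence exactly half of these $F + 1 = (s-1)(t-1)$ positions are beads. Since position $0$ is a space and all of $P_{s,t}$ lies in $\{1, \ldots, F\}$ (again by Lemma \ref{syl}), the beads in this window are exactly $P_{s,t}$, giving $|P_{s,t}| = (s-1)(t-1)/2$. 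Finally, the interval $\{1, \ldots, (s-1)(t-1)\} = \{1, \ldots, F+1\}$ differs from $\{1, \ldots, F\}$ only in the extra position $F + 1 = st - s - t + 1$, which lies in the semigroup and hence not in $P_{s,t}$; so it contains exactly $(s-1)(t-1)/2$ elements of $P_{s,t}$, which is half of its $(s-1)(t-1)$ integers.

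The step I expect to require the most care is the bead-set bookkeeping: confirming that the minimal bead-set of $\kappa_{s,t}$ has beads exactly at $P_{s,t}$ with position $0$ a space, and that the reflection window is precisely $\{0, \ldots, F\}$ so no off-diagram position is miscounted. The parity observation (that $F$ is odd, so the involution is fixed-point free) is what guarantees an exact halving rather than an off-by-one, and I would state it explicitly. As an alternative that bypasses the abacus, one may prove $|P_{s,t}| = (s-1)(t-1)/2$ from the symmetry of two-generated numerical semigroups: writing each integer uniquely as $z = as + bt$ with $0 \le a \le t-1$, one checks that $z \mapsto F - z$ interchanges the semigroup with its complement in $\{0, \ldots, F\}$, yielding the same count.
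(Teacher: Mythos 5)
Your proof is correct, and its skeleton matches the paper's own: read $P_{s,t}$ off as the beads of the minimal bead-set of $\kappa_{s,t}$, use Lemma \ref{syl} to place the largest bead at $st-s-t$, and deduce the halving from the axis $\theta(\kappa_{s,t})=\frac{st-s-t}{2}$. The one real divergence is the mechanism that turns the axis into a count: you invoke self-conjugacy (Theorem \ref{s2t2}) together with Corollary \ref{selfhalfint}, so that reflection about $\theta$ is a fixed-point-free involution exchanging beads and spacers, and the pairing gives exactly half beads; the paper instead cites Proposition \ref{halfint}, whose counting identity (the number of beads to the right of $\theta$ equals the number of spaces to its left) yields the same count without ever using that $\kappa_{s,t}$ is self-conjugate. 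Your route spends an extra hypothesis (self-conjugacy, which Theorem \ref{s2t2} supplies for free here) but is arguably more transparent; the paper's route is more general, since its counting identity holds for an arbitrary partition. A point in your favor: you make explicit the parity fact that $(s-1)(t-1)$ is even, hence $st-s-t$ is odd and $\theta$ is a genuine half-integer splitting $\{0,1,\dots,st-s-t\}$ into equal halves --- the paper's two-line proof leaves this, and the bookkeeping identifying beads with $P_{s,t}$, implicit, and without the parity observation the ``exactly half'' conclusion would not follow.
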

They cite a result of T. Popoviciu on the integral and fractional parts of an integer of which this is a consequence. We provide an alternative proof using only the geometry of the $s$-abacus. 
\begin{proof}[Proof of Lemma \ref{half}] Since by Lemma \ref{syl} neither $(s-1)(t-1)$ nor $0$ are in $P_{s,t}$, it is equivalent to prove half of the integers in $\{0,1,2, \cdots, st-s-t\}$ are in the minimal bead-set of $\kappa_{s,t}$. By Lemma \ref{halfint}, the axis is $\theta(\kappa_{s,t})=\frac{st-s-t}{2}$. This implies the result.
\end{proof} 
Perhaps there are other results on simultaneous core partitions that can be understood using bead-sets and the geometry of the $s$-abacus.

\vs\vs\vs\vs
{\bf Acknowledgements} This paper was conceived while visiting the University of Minnesota Duluth REU in July 2014. The author thanks J. Gallian for the invitation and hospitality while there. The author also thanks Christopher R. H. Hanusa for his valuable comments on the manuscript and his suggestions on diagrams.

\clearpage

\vspace{1cm}
\begin{center}
\begin{figure}
{\bf APPENDIX A}\\
 \vspace{0.25cm}
 {\bf The $s$-abaci ${\bf \alpha(s)}$ of ${\bf \kappa_{s\pm1}}$}
\vs\vs
\vspace{.2cm}
\caption{$s=4$}
\hspace{-1cm}
\begin{center}
\scalebox{0.3}{\begin{tikzpicture}

\draw (4cm,-2cm) node [circle, minimum size=1.5cm, inner sep=0pt, draw, anchor=south west] {\Huge $4$};
\draw (6cm,-2cm) node [circle, minimum size=1.5cm, inner sep=0pt, draw=none, anchor=south west] {\Huge $5$};
\draw (8cm,-2cm) node [circle, minimum size=1.5cm, inner sep=0pt, draw=none, anchor=south west] {\Huge $6$};
\draw (10cm,-2cm) node [circle, minimum size=1.5cm, inner sep=0pt, draw, anchor=south west] {\Huge $7$};
\draw (4cm,-4cm) node [circle, minimum size=1.5cm, inner sep=0pt, draw=none, anchor=south west] {\Huge $0$};
\draw (6cm,-4cm) node [circle, minimum size=1.5cm, inner sep=0pt, draw, anchor=south west] {\Huge $1$};
\draw (8cm,-4cm) node [circle, minimum size=1.5cm, inner sep=0pt, draw, anchor=south west] {\Huge $2$};
\draw (10cm,-4cm) node [circle, minimum size=1.5cm, inner sep=0pt, draw=none, anchor=south west] {\Huge $3$};
\end{tikzpicture}}
\end{center}
\end{figure}

\vs\vs\vs\vs\vs
\begin{figure}
\caption{$s=6$}
\hspace{-1.5cm}
\begin{center}
\scalebox{0.3}{\begin{tikzpicture}

\draw (2cm,0cm) node [circle, minimum size=1.5cm, inner sep=0pt, draw, anchor=south west] {\Huge $18$};
\draw (4cm,0cm) node [circle, minimum size=1.5cm, inner sep=0pt, draw=none, anchor=south west] {\Huge $19$};
\draw (6cm,0cm) node [circle, minimum size=1.5cm, inner sep=0pt, draw=none, anchor=south west] {\Huge $20$};
\draw (8cm,0cm) node [circle, minimum size=1.5cm, inner sep=0pt, draw=none, anchor=south west] {\Huge $21$};
\draw (10cm,0cm) node [circle, minimum size=1.5cm, inner sep=0pt, draw=none, anchor=south west] {\Huge $22$};
\draw (12cm,0cm) node [circle, minimum size=1.5cm, inner sep=0pt, draw, anchor=south west] {\Huge $23$};
\draw (2cm,-2cm) node [circle, minimum size=1.5cm, inner sep=0pt, draw=none, anchor=south west] {\Huge $12$};
\draw (4cm,-2cm) node [circle, minimum size=1.5cm, inner sep=0pt, draw, anchor=south west] {\Huge $13$};
\draw (6cm,-2cm) node [circle, minimum size=1.5cm, inner sep=0pt, draw=none, anchor=south west] {\Huge $14$};
\draw (8cm,-2cm) node [circle, minimum size=1.5cm, inner sep=0pt, draw=none, anchor=south west] {\Huge $15$};
\draw (10cm,-2cm) node [circle, minimum size=1.5cm, inner sep=0pt, draw, anchor=south west] {\Huge $16$};
\draw (12cm,-2cm) node [circle, minimum size=1.5cm, inner sep=0pt, draw=none, anchor=south west] {\Huge $17$};
\draw (2cm,-4cm) node [circle, minimum size=1.5cm, inner sep=0pt, draw, anchor=south west] {\Huge $6$};
\draw (4cm,-4cm) node [circle, minimum size=1.5cm, inner sep=0pt, draw=none, anchor=south west] {\Huge $7$};
\draw (6cm,-4cm) node [circle, minimum size=1.5cm, inner sep=0pt, draw, anchor=south west] {\Huge $8$};
\draw (8cm,-4cm) node [circle, minimum size=1.5cm, inner sep=0pt, draw, anchor=south west] {\Huge $9$};
\draw (10cm,-4cm) node [circle, minimum size=1.5cm, inner sep=0pt, draw=none, anchor=south west] {\Huge $10$};
\draw (12cm,-4cm) node [circle, minimum size=1.5cm, inner sep=0pt, draw, anchor=south west] {\Huge $11$};
\draw (2cm,-6cm) node [circle, minimum size=1.5cm, inner sep=0pt, draw=none, anchor=south west] {\Huge $0$};
\draw (4cm,-6cm) node [circle, minimum size=1.5cm, inner sep=0pt, draw, anchor=south west] {\Huge $1$};
\draw (6cm,-6cm) node [circle, minimum size=1.5cm, inner sep=0pt, draw, anchor=south west] {\Huge $2$};
\draw (8cm,-6cm) node [circle, minimum size=1.5cm, inner sep=0pt, draw, anchor=south west] {\Huge $3$};
\draw (10cm,-6cm) node [circle, minimum size=1.5cm, inner sep=0pt, draw, anchor=south west]{\Huge $4$};
\draw (12cm,-6cm) node [circle, minimum size=1.5cm, inner sep=0pt, draw=none, anchor=south west]{\Huge $5$};
\end{tikzpicture}}
\end{center}
\end{figure}

\vs
\begin{figure}
\caption{$s=8$}
\hspace{-1.5cm}
\begin{center}
\scalebox{0.3}{\begin{tikzpicture}
\draw (0cm,-8cm) node [circle, minimum size=1.5cm, inner sep=0pt, draw=none, anchor=south west] {\Huge $0$};
\draw (2cm,-8cm) node [circle, minimum size=1.5cm, inner sep=0pt, draw, anchor=south west] {\Huge $1$};
\draw (4cm,-8cm) node [circle, minimum size=1.5cm, inner sep=0pt, draw, anchor=south west] {\Huge $2$};
\draw (6cm,-8cm) node [circle, minimum size=1.5cm, inner sep=0pt, draw, anchor=south west] {\Huge $3$};
\draw (8cm,-8cm) node [circle, minimum size=1.5cm, inner sep=0pt, draw, anchor=south west] {\Huge $4$};
\draw (10cm,-8cm) node [circle, minimum size=1.5cm, inner sep=0pt, draw, anchor=south west] {\Huge $5$};
\draw (12cm,-8cm) node [circle, minimum size=1.5cm, inner sep=0pt, draw, anchor=south west] {\Huge $6$};
\draw (14cm,-8cm) node [circle, minimum size=1.5cm, inner sep=0pt, draw=none, anchor=south west] {\Huge $7$};
\draw (0cm,-6cm) node [circle, minimum size=1.5cm, inner sep=0pt, draw, anchor=south west] {\Huge $8$};
\draw (2cm,-6cm) node [circle, minimum size=1.5cm, inner sep=0pt, draw=none, anchor=south west] {\Huge $9$};
\draw (4cm,-6cm) node [circle, minimum size=1.5cm, inner sep=0pt, draw, anchor=south west] {\Huge $10$};
\draw (6cm,-6cm) node [circle, minimum size=1.5cm, inner sep=0pt, draw, anchor=south west] {\Huge $11$};
\draw (8cm,-6cm) node [circle, minimum size=1.5cm, inner sep=0pt, draw, anchor=south west] {\Huge $12$};
\draw (10cm,-6cm) node [circle, minimum size=1.5cm, inner sep=0pt, draw, anchor=south west] {\Huge $13$};
\draw (12cm,-6cm) node [circle, minimum size=1.5cm, inner sep=0pt, draw=none, anchor=south west] {\Huge $14$};
\draw (14cm,-6cm) node [circle, minimum size=1.5cm, inner sep=0pt, draw, anchor=south west] {\Huge $15$};

\draw (0cm,-4cm) node [circle, minimum size=1.5cm, inner sep=0pt, draw=none, anchor=south west] {\Huge $16$};
\draw (2cm,-4cm) node [circle, minimum size=1.5cm, inner sep=0pt, draw, anchor=south west] {\Huge $17$};
\draw (4cm,-4cm) node [circle, minimum size=1.5cm, inner sep=0pt, draw=none, anchor=south west] {\Huge $18$};
\draw (6cm,-4cm) node [circle, minimum size=1.5cm, inner sep=0pt, draw, anchor=south west] {\Huge $19$};
\draw (8cm,-4cm) node [circle, minimum size=1.5cm, inner sep=0pt, draw, anchor=south west] {\Huge $20$};
\draw (10cm,-4cm) node [circle, minimum size=1.5cm, inner sep=0pt, draw=none, anchor=south west] {\Huge $21$};
\draw (12cm,-4cm) node [circle, minimum size=1.5cm, inner sep=0pt, draw, anchor=south west] {\Huge $22$};
\draw (14cm,-4cm) node [circle, minimum size=1.5cm, inner sep=0pt, draw=none, anchor=south west] {\Huge $23$};
\draw (0cm,-2cm) node [circle, minimum size=1.5cm, inner sep=0pt, draw, anchor=south west] {\Huge $24$};
\draw (2cm,-2cm) node [circle, minimum size=1.5cm, inner sep=0pt, draw=none, anchor=south west] {\Huge $25$};
\draw (4cm,-2cm) node [circle, minimum size=1.5cm, inner sep=0pt, draw, anchor=south west] {\Huge $26$};
\draw (6cm,-2cm) node [circle, minimum size=1.5cm, inner sep=0pt, draw=none, anchor=south west] {\Huge $27$};
\draw (8cm,-2cm) node [circle, minimum size=1.5cm, inner sep=0pt, draw=none, anchor=south west] {\Huge $28$};
\draw (10cm,-2cm) node [circle, minimum size=1.5cm, inner sep=0pt, draw, anchor=south west] {\Huge $29$};
\draw (12cm,-2cm) node [circle, minimum size=1.5cm, inner sep=0pt, draw=none, anchor=south west] {\Huge $30$};
\draw (14cm,-2cm) node [circle, minimum size=1.5cm, inner sep=0pt, draw, anchor=south west] {\Huge $31$};

\draw (0cm,0cm) node [circle, minimum size=1.5cm, inner sep=0pt, draw=none, anchor=south west] {\Huge $32$};
\draw (2cm,0cm) node [circle, minimum size=1.5cm, inner sep=0pt, draw, anchor=south west] {\Huge $33$};
\draw (4cm,0cm) node [circle, minimum size=1.5cm, inner sep=0pt, draw=none, anchor=south west] {\Huge $34$};
\draw (6cm,0cm) node [circle, minimum size=1.5cm, inner sep=0pt, draw=none, anchor=south west] {\Huge $35$};
\draw (8cm,0cm) node [circle, minimum size=1.5cm, inner sep=0pt, draw=none, anchor=south west] {\Huge $36$};
\draw (10cm,0cm) node [circle, minimum size=1.5cm, inner sep=0pt, draw=none, anchor=south west] {\Huge $37$};
\draw (12cm,0cm) node [circle, minimum size=1.5cm, inner sep=0pt, draw, anchor=south west] {\Huge $38$};
\draw (14cm,0cm) node [circle, minimum size=1.5cm, inner sep=0pt, draw=none, anchor=south west] {\Huge $39$};
\draw (0cm,2cm) node [circle, minimum size=1.5cm, inner sep=0pt, draw, anchor=south west] {\Huge $40$};
\draw (2cm,2cm) node [circle, minimum size=1.5cm, inner sep=0pt, draw=none, anchor=south west] {\Huge $41$};
\draw (4cm,2cm) node [circle, minimum size=1.5cm, inner sep=0pt, draw=none, anchor=south west] {\Huge $42$};
\draw (6cm,2cm) node [circle, minimum size=1.5cm, inner sep=0pt, draw=none, anchor=south west] {\Huge $43$};
\draw (8cm,2cm) node [circle, minimum size=1.5cm, inner sep=0pt, draw=none, anchor=south west] {\Huge $44$};
\draw (10cm,2cm) node [circle, minimum size=1.5cm, inner sep=0pt, draw=none, anchor=south west] {\Huge $45$};
\draw (12cm,2cm) node [circle, minimum size=1.5cm, inner sep=0pt, draw=none, anchor=south west] {\Huge $46$};
\draw (14cm,2cm) node [circle, minimum size=1.5cm, inner sep=0pt, draw, anchor=south west] {\Huge $47$};
\end{tikzpicture}}
\end{center}
\end{figure}

\vs\vs\vs\vs\vs
\begin{figure}
\caption{$s=10$}
\hspace{-1.5cm}
\begin{center}
\scalebox{0.3}{\begin{tikzpicture} 
\draw (-2cm,-8cm) node [circle, minimum size=1.5cm, inner sep=0pt, draw=none, anchor=south west] {\Huge $0$};
\draw (0cm,-8cm) node [circle, minimum size=1.5cm, inner sep=0pt, draw, anchor=south west] {\Huge $1$};
\draw (2cm,-8cm) node [circle, minimum size=1.5cm, inner sep=0pt, draw, anchor=south west] {\Huge $2$};
\draw (4cm,-8cm) node [circle, minimum size=1.5cm, inner sep=0pt, draw, anchor=south west] {\Huge $3$};
\draw (6cm,-8cm) node [circle, minimum size=1.5cm, inner sep=0pt, draw, anchor=south west] {\Huge $4$};
\draw (8cm,-8cm) node [circle, minimum size=1.5cm, inner sep=0pt, draw, anchor=south west] {\Huge $5$};
\draw (10cm,-8cm) node [circle, minimum size=1.5cm, inner sep=0pt, draw, anchor=south west] {\Huge $6$};
\draw (12cm,-8cm) node [circle, minimum size=1.5cm, inner sep=0pt, draw, anchor=south west] {\Huge $7$};
\draw (14cm,-8cm) node [circle, minimum size=1.5cm, inner sep=0pt, draw, anchor=south west] {\Huge $8$};
\draw (16cm,-8cm) node [circle, minimum size=1.5cm, inner sep=0pt, draw=none, anchor=south west] {\Huge $9$};
\draw (-2cm,-6cm) node [circle, minimum size=1.5cm, inner sep=0pt, draw, anchor=south west] {\Huge $10$};
\draw (0cm,-6cm) node [circle, minimum size=1.5cm, inner sep=0pt, draw=none, anchor=south west] {\Huge $11$};
\draw (2cm,-6cm) node [circle, minimum size=1.5cm, inner sep=0pt, draw, anchor=south west] {\Huge $12$};
\draw (4cm,-6cm) node [circle, minimum size=1.5cm, inner sep=0pt, draw, anchor=south west] {\Huge $13$};
\draw (6cm,-6cm) node [circle, minimum size=1.5cm, inner sep=0pt, draw, anchor=south west] {\Huge $14$};
\draw (8cm,-6cm) node [circle, minimum size=1.5cm, inner sep=0pt, draw, anchor=south west] {\Huge $15$};
\draw (10cm,-6cm) node [circle, minimum size=1.5cm, inner sep=0pt, draw, anchor=south west] {\Huge $16$};
\draw (12cm,-6cm) node [circle, minimum size=1.5cm, inner sep=0pt, draw, anchor=south west] {\Huge $17$};
\draw (14cm,-6cm) node [circle, minimum size=1.5cm, inner sep=0pt, draw=none, anchor=south west] {\Huge $18$};
\draw (16cm,-6cm) node [circle, minimum size=1.5cm, inner sep=0pt, draw, anchor=south west] {\Huge $19$};
\draw (-2cm,-4cm) node [circle, minimum size=1.5cm, inner sep=0pt, draw=none, anchor=south west] {\Huge $20$};
\draw (0cm,-4cm) node [circle, minimum size=1.5cm, inner sep=0pt, draw, anchor=south west] {\Huge $21$};
\draw (2cm,-4cm) node [circle, minimum size=1.5cm, inner sep=0pt, draw=none, anchor=south west] {\Huge $22$};
\draw (4cm,-4cm) node [circle, minimum size=1.5cm, inner sep=0pt, draw, anchor=south west] {\Huge $23$};
\draw (6cm,-4cm) node [circle, minimum size=1.5cm, inner sep=0pt, draw, anchor=south west] {\Huge $24$};
\draw (8cm,-4cm) node [circle, minimum size=1.5cm, inner sep=0pt, draw, anchor=south west] {\Huge $25$};
\draw (10cm,-4cm) node [circle, minimum size=1.5cm, inner sep=0pt, draw, anchor=south west] {\Huge $26$};
\draw (12cm,-4cm) node [circle, minimum size=1.5cm, inner sep=0pt, draw=none, anchor=south west] {\Huge $27$};
\draw (14cm,-4cm) node [circle, minimum size=1.5cm, inner sep=0pt, draw, anchor=south west] {\Huge $28$};
\draw (16cm,-4cm) node [circle, minimum size=1.5cm, inner sep=0pt, draw=none, anchor=south west] {\Huge $29$};
\draw (-2cm,-2cm) node [circle, minimum size=1.5cm, inner sep=0pt, draw, anchor=south west] {\Huge $30$};
\draw (0cm,-2cm) node [circle, minimum size=1.5cm, inner sep=0pt, draw=none, anchor=south west] {\Huge $31$};
\draw (2cm,-2cm) node [circle, minimum size=1.5cm, inner sep=0pt, draw, anchor=south west] {\Huge $32$};
\draw (4cm,-2cm) node [circle, minimum size=1.5cm, inner sep=0pt, draw=none, anchor=south west] {\Huge $33$};
\draw (6cm,-2cm) node [circle, minimum size=1.5cm, inner sep=0pt, draw, anchor=south west] {\Huge $34$};
\draw (8cm,-2cm) node [circle, minimum size=1.5cm, inner sep=0pt, draw, anchor=south west] {\Huge $35$};
\draw (10cm,-2cm) node [circle, minimum size=1.5cm, inner sep=0pt, draw=none, anchor=south west] {\Huge $36$};
\draw (12cm,-2cm) node [circle, minimum size=1.5cm, inner sep=0pt, draw, anchor=south west] {\Huge $37$};
\draw (14cm,-2cm) node [circle, minimum size=1.5cm, inner sep=0pt, draw=none, anchor=south west] {\Huge $38$};
\draw (16cm,-2cm) node [circle, minimum size=1.5cm, inner sep=0pt, draw, anchor=south west] {\Huge $39$};

\draw (-2cm,0cm) node [circle, minimum size=1.5cm, inner sep=0pt, draw=none, anchor=south west] {\Huge $40$};
\draw (0cm,0cm) node [circle, minimum size=1.5cm, inner sep=0pt, draw, anchor=south west] {\Huge $41$};
\draw (2cm,0cm) node [circle, minimum size=1.5cm, inner sep=0pt, draw=none, anchor=south west] {\Huge $42$};
\draw (4cm,0cm) node [circle, minimum size=1.5cm, inner sep=0pt, draw, anchor=south west] {\Huge $43$};
\draw (6cm,0cm) node [circle, minimum size=1.5cm, inner sep=0pt, draw=none, anchor=south west] {\Huge $44$};
\draw (8cm,0cm) node [circle, minimum size=1.5cm, inner sep=0pt, draw=none, anchor=south west] {\Huge $45$};
\draw (10cm,0cm) node [circle, minimum size=1.5cm, inner sep=0pt, draw, anchor=south west] {\Huge $46$};
\draw (12cm,0cm) node [circle, minimum size=1.5cm, inner sep=0pt, draw=none, anchor=south west] {\Huge $47$};
\draw (14cm,0cm) node [circle, minimum size=1.5cm, inner sep=0pt, draw, anchor=south west] {\Huge $48$};
\draw (16cm,0cm) node [circle, minimum size=1.5cm, inner sep=0pt, draw=none, anchor=south west] {\Huge $49$};
\draw (-2cm,2cm) node [circle, minimum size=1.5cm, inner sep=0pt, draw, anchor=south west] {\Huge $50$};
\draw (0cm,2cm) node [circle, minimum size=1.5cm, inner sep=0pt, draw=none, anchor=south west] {\Huge $51$};
\draw (2cm,2cm) node [circle, minimum size=1.5cm, inner sep=0pt, draw, anchor=south west] {\Huge $52$};
\draw (4cm,2cm) node [circle, minimum size=1.5cm, inner sep=0pt, draw=none, anchor=south west] {\Huge $53$};
\draw (6cm,2cm) node [circle, minimum size=1.5cm, inner sep=0pt, draw=none, anchor=south west] {\Huge $54$};
\draw (8cm,2cm) node [circle, minimum size=1.5cm, inner sep=0pt, draw=none, anchor=south west] {\Huge $55$};
\draw (10cm,2cm) node [circle, minimum size=1.5cm, inner sep=0pt, draw=none, anchor=south west] {\Huge $56$};
\draw (12cm,2cm) node [circle, minimum size=1.5cm, inner sep=0pt, draw, anchor=south west] {\Huge $57$};
\draw (14cm,2cm) node [circle, minimum size=1.5cm, inner sep=0pt, draw=none, anchor=south west] {\Huge $58$};
\draw (16cm,2cm) node [circle, minimum size=1.5cm, inner sep=0pt, draw, anchor=south west] {\Huge $59$};

\draw (-2cm,4cm) node [circle, minimum size=1.5cm, inner sep=0pt, draw=none, anchor=south west] {\Huge $60$};
\draw (0cm,4cm) node [circle, minimum size=1.5cm, inner sep=0pt, draw, anchor=south west] {\Huge $61$};
\draw (2cm,4cm) node [circle, minimum size=1.5cm, inner sep=0pt, draw=none, anchor=south west] {\Huge $62$};
\draw (4cm,4cm) node [circle, minimum size=1.5cm, inner sep=0pt, draw=none, anchor=south west] {\Huge $63$};
\draw (6cm,4cm) node [circle, minimum size=1.5cm, inner sep=0pt, draw=none, anchor=south west] {\Huge $64$};
\draw (8cm,4cm) node [circle, minimum size=1.5cm, inner sep=0pt, draw=none, anchor=south west] {\Huge $65$};
\draw (10cm,4cm) node [circle, minimum size=1.5cm, inner sep=0pt, draw=none, anchor=south west] {\Huge $66$};
\draw (12cm,4cm) node [circle, minimum size=1.5cm, inner sep=0pt, draw=none, anchor=south west] {\Huge $67$};
\draw (14cm,4cm) node [circle, minimum size=1.5cm, inner sep=0pt, draw, anchor=south west] {\Huge $68$};
\draw (16cm,4cm) node [circle, minimum size=1.5cm, inner sep=0pt, draw=none, anchor=south west] {\Huge $69$};
\draw (-2cm,6cm) node [circle, minimum size=1.5cm, inner sep=0pt, draw, anchor=south west] {\Huge $70$};
\draw (0cm,6cm) node [circle, minimum size=1.5cm, inner sep=0pt, draw=none, anchor=south west] {\Huge $71$};
\draw (2cm,6cm) node [circle, minimum size=1.5cm, inner sep=0pt, draw=none, anchor=south west] {\Huge $72$};
\draw (4cm,6cm) node [circle, minimum size=1.5cm, inner sep=0pt, draw=none, anchor=south west] {\Huge $73$};
\draw (6cm,6cm) node [circle, minimum size=1.5cm, inner sep=0pt, draw=none, anchor=south west] {\Huge $74$};
\draw (8cm,6cm) node [circle, minimum size=1.5cm, inner sep=0pt, draw=none, anchor=south west] {\Huge $75$};
\draw (10cm,6cm) node [circle, minimum size=1.5cm, inner sep=0pt, draw=none, anchor=south west] {\Huge $76$};
\draw (12cm,6cm) node [circle, minimum size=1.5cm, inner sep=0pt, draw=none, anchor=south west] {\Huge $77$};
\draw (14cm,6cm) node [circle, minimum size=1.5cm, inner sep=0pt, draw=none, anchor=south west] {\Huge $78$};
\draw (16cm,6cm) node [circle, minimum size=1.5cm, inner sep=0pt, draw, anchor=south west] {\Huge $79$};
\end{tikzpicture}}
\end{center}
\end{figure}
\clearpage
\begin{center}
\begin{figure}
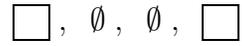


 {\bf APPENDIX B} \\
 \vspace{0.2cm}
 {\bf The $s$-quotients of ${\bf \kappa_{s\pm1}}$}

\vspace{.5cm}
\caption{4-quotient of $\kappa_{3,5}$}
\Yvcentermath1
$\yng(1)\;,\;\; \emptyset\;,\;\; \emptyset\;,\;\; \yng(1)$
\end{figure}
\end{center}
\vs\vs\vs
\begin{center}
\begin{figure}
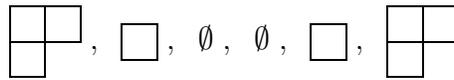

\hspace{-4cm}
\caption{6-quotient of $\kappa_{5,7}$}
 \vspace{0.1cm}
\Yvcentermath1
$\yng(2,1)\;,\;\; \yng(1)\;,\;\; \emptyset\;,\;\; \emptyset\;,\;\; \yng(1)\;,\; \;\yng(2,1)$
\end{figure}
\end{center}
\vs\vs\vs
\begin{center}
\begin{figure}
\hspace{-8cm}
\caption{8-quotient of $\kappa_{7,9}$}
\Yvcentermath1
$\yng(3,2,1)\;, \;\; \yng(2,1)\;,\;\; \yng(1)\;,\;\; \emptyset\;,\;\; \emptyset\;,\;\; \yng(1)\;,\; \;\yng(2,1)\;,\; \;\yng(3,2,1)$
\end{figure}
\end{center}
\vs\vs\vs\vs
\begin{figure}
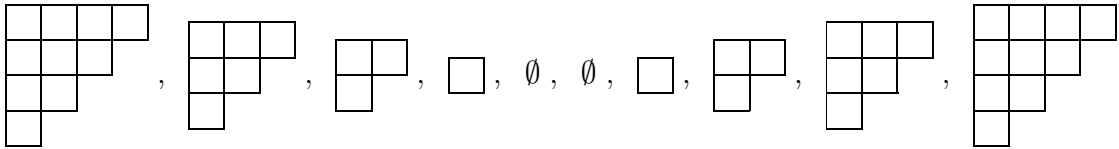


\caption{10-quotient of $\kappa_{9,11}$}
 \Yvcentermath1
\hspace{-4cm}
$$\hspace{-1.1cm}
\yng(4,3,2,1)\;,\;\;\yng(3,2,1)\;, \;\; \yng(2,1)\;,\;\; \yng(1)\;,\;\; \emptyset\;,\;\; \emptyset\;,\;\; \yng(1)\;,\; \;\yng(2,1)\;,\; \;\yng(3,2,1)\;,\;\;\yng(4,3,2,1)$$
\end{figure}
\end{center}
\clearpage


\begin{thebibliography}{99}

\bibitem [1] {AA1} A. Aggarwal, {\it Armstrong's conjecture for $(k, mk+1)$-core partitions,}\\ http://arxiv.org/abs/1407.5134

\bibitem [2] {AA2} A. Aggarwal, {\it A Converse to Vandehey's Theorem on Simultaneous Core} \\
{\it Containment},
http://arxiv.org/abs/1408.0550

\bibitem [3] {A} T. Amdeberhan, {\it Theorems, Problems, conjectures}, \\
http://129.81.170.14/~tamdeberhan/conjectures.pdf

\bibitem [4] {A-L} T. Amdeberhan and E. Leven, 
{\it Multi-cores, posets, and lattice paths} arXiv:1406.2250.

\bibitem [5] {A} J. Anderson, {\it Partitions which are simultaneously $t1$- and $t2$-core}, Discrete Math. 248 (2002), 237–243.

\bibitem [6] {A-H-J} D. Armstrong, C. R. H. Hanusa, B. C. Jones, {\it Results and conjectures on simultaneous core partitions} http://arxiv.org/abs/1308.0572

\bibitem [7] {C-H-W} William Y.C. Chen, Harry H.Y. Huang, Larry X.W. Wang, {\it The Average Size of a Self-conjugate (s, t)-core Partition,} http://arxiv.org/abs/1406.2583

\bibitem [8] {F} M. Fayers {\it The $t$-core of an $s$-core}, J. Combin, Theory Ser. A 118 (2011) 1525–1539.

\bibitem [9] {J-K} G. James and A. Kerber, {\it The Representation Theory of
the Symmetric Groups}. Encyclopedia of Mathematics, 16.

\bibitem [10] {H-N} C.R.H. Hanusa and R. Nath {\it The number of self-conjugate partitions}, J.Number Theory, 133:751–768, 2013.

\bibitem [10] {Ka} B. Kane, Masters Thesis, Unpublished. 

\bibitem [11] {N} R. Nath,~{\it On diagonal hooks of self-conjugate partitions}, \\ http://arxiv.org/abs/0903.2494

\bibitem [12] {O} J. Olsson, {\it Combinatorics and Representation Theory of Finite Groups}, Vorlesungen aus dem FB Mathematik der Univ. Essen, Heft 20, 1993.

\bibitem [13] {O-S} J. Olsson and D. Stanton, {\it Block inclusions and cores of partitions}, Aequationes Math. 74 (2007) 90-110.

\bibitem [14] {S-Z} Richard P. Stanley, Fabrizio Zanello, {\it The Catalan case of Armstrong's conjecture on core partitions}, http://arxiv.org/abs/1312.4352.

\bibitem [15] {T} A. Tripathi, {\it On the largest size of a partition that is both an $s$ and $t$ core}, Journal of Number Theory Volume 129, Issue 7, July 2009, Pages 1805–1811.

\bibitem [16] {V} J. Vandehey, {\it Containment in $(s,t)$-core partitions}, \\
 http://arxiv.org/abs/0809.2134.

\bibitem [17] {X} H. Xiong, {\it On the largest size of $(t,t+1,\cdots,t+p)$-core partitions},\\ http://arxiv.org/abs/1410.2061.

\bibitem [18] {Y-Z-Z} Jane Y.X. Yang, Micheal X.X. Zhong, Robin D.P. Zhou, {\it On the Enumeration of $(s,s+1,s+2)$-core Partitions},
http://arxiv.org/pdf/1406.2583.pdf

\end{thebibliography}
\end{document}